\pgfplotsset{compat=1.18}
\newtheorem{theorem}{Theorem}[section]
\newtheorem{corollary}[theorem]{Corollary} 
\newtheorem{lemma}[theorem]{Lemma}
\newtheorem{proposition}[theorem]{Proposition}
\newtheorem{remark}[theorem]{Remark}
\theoremstyle{definition}
\newtheorem{example}[theorem]{Example}
\newtheorem{definition}[theorem]{Definition}
\newtheorem*{theorem**}{Theorem\theoremnum}
\newenvironment{theorem*}[1][]{%
  \edef\theoremnum{\if\relax\detokenize{#1}\relax\else~#1\fi}
  \begin{theorem**}
}{%
  \end{theorem**}
}
\newcommand{\fa}{\mathbf{a}}
\newcommand{\fb}{\mathbf{b}}
\newcommand{\fx}{\mathbf{x}}
\newcommand{\Ucal}{\mathcal{U}}
\newcommand{\Wcal}{\mathcal{W}}
\newcommand{\Vcal}{\mathcal{V}}
\newcommand{\CC}{\mathbb{C}}
\newcommand{\PP}{\mathbb{P}}
\newcommand{\RR}{\mathbb{R}}
\newcommand{\ideal}{\mathcal{I}}
\DeclareMathOperator{\Gr}{Gr}
\DeclareMathOperator{\Span}{Span}
\DeclareMathOperator{\rank}{rank}
\newcommand{\T}{^\mathsf{T}}
\newcommand{\Zcal}{\mathcal{Z}}
\title{{\bf Identifiability of overcomplete \\ independent component analysis}}
\author{Kexin Wang and Anna Seigal}
\date{}
\begin{document}
\maketitle

\begin{abstract}
Independent component analysis (ICA) studies mixtures of independent latent sources. 
An ICA model is identifiable if the mixing can be recovered uniquely. 
It is well-known that ICA is identifiable if and only if at most one source is Gaussian. 
However, this applies only to the setting where the number of sources is at most the number of observations. In this paper, we generalize the identifiability of ICA to the overcomplete setting, where the number of sources exceeds the number of observations. 
We give an if and only if characterization of the identifiability of overcomplete ICA.
The proof studies linear spaces of rank one symmetric matrices.
For generic mixing, we present an identifiability condition in terms of the number of sources and the number of observations.
We use our identifiability results to design an algorithm to recover the mixing matrix from data and apply it to synthetic data and two real datasets.
\end{abstract}

\section{Introduction}

Blind source separation seeks to recover latent sources and unknown mixing from observations of mixtures of signals~\cite{comon2010handbook}.
A special case is independent component analysis (ICA), which assumes that the latent sources are independent. 
Classical ICA assumes that the observations are linear mixtures of the independent sources. That is,
\begin{equation}
\label{eqn:defica}
\mathbf{x}=A\mathbf{s},
\end{equation}
where $\mathbf{s} = (s_1,\ldots,s_J)\T$ is a vector of independent sources, $\mathbf{x}=(x_1,\ldots,x_I)\T$ collects the observed variables, 
and $A \in \RR^{I \times J}$ is an unknown mixing matrix.
Applications of ICA include recovering speech signals~\cite{bartlett2002face} and
brain signals~\cite{jung2001imaging}, casual discovery~\cite{shimizu2006linear}, and image decomposition~\cite{hyvarinen1999fast,podosinnikova2019overcomplete}.
The ICA framework has seen extensions to nonlinear mixtures, see e.g.~\cite{jutten2004advances}.

The ICA model~\eqref{eqn:defica} is identifiable
if the mixing matrix $A$ can be uniquely recovered from $\fx$, up to column scaling and permutation. 
Identifiability is crucial to interpreting the entries of the mixing matrix. Depending on the application, these encode casual relationships~\cite{shimizu2006linear} or image components~\cite{hyvarinen1999fast,podosinnikova2019overcomplete}. 
Scaling and permutation indeterminacy are unavoidable, corresponding to the arbitrary order and scale of the sources, which does not affect their independence\footnote{ 
Let $B = A PD$, for permutation matrix $P$ and diagonal matrix $D$.
Then 
$A \mathbf{s} = B \mathbf{r}$,
where $\mathbf{r} = D^{-1} P\T \mathbf{s}$ re-orders and scales $\mathbf{s}$. 
}.
 
The following characterization of the identifiability of ICA is well-known. Recall that a distribution is non-degenerate if it is not supported at a single point.

\begin{theorem}[{\cite[Theorem 11 and Corollary 13]{comon1994independent}}]
\label{thm:comon}
Consider the ICA model $\mathbf{x}=A\mathbf{s}$, where $\mathbf{s} = (s_1,\ldots,s_I)\T$ is a vector of non-degenerate independent sources, $\mathbf{x}=(x_1,\ldots,x_I)\T$ is a vector of observations, and $A \in \RR^{I \times I}$ is invertible. Identifiability holds if and only if at most one of the sources is Gaussian. 
\end{theorem}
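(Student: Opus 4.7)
The plan is to prove each direction separately: necessity by an explicit construction, and sufficiency via the Darmois--Skitovich theorem.

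For necessity, I construct an explicit non-identifiability whenever two sources are Gaussian. After rescaling (which is absorbable into the columns of $A$), assume $s_1, s_2$ are independent standard Gaussians. For any orthogonal $R \in O(2)$, the vector $R(s_1, s_2)\T$ is again a pair of independent standard Gaussians. Setting $A' = A \cdot \mathrm{diag}(R\T, I_{I-2})$ and $\mathbf{s}' = \mathrm{diag}(R, I_{I-2})\mathbf{s}$ gives $A\mathbf{s} = A'\mathbf{s}'$ with $\mathbf{s}'$ still independent; choosing $R$ outside the finite set of signed $2 \times 2$ permutations produces a mixing inequivalent to $A$ under column scaling and permutation.

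For sufficiency, suppose $A\mathbf{s} = B\mathbf{r}$ with $\mathbf{s}, \mathbf{r}$ both vectors of independent non-degenerate sources, each having at most one Gaussian component. Set $M = A^{-1}B$, so $\mathbf{s} = M\mathbf{r}$; it suffices to show that $M$ is a monomial matrix (permutation times diagonal). The key tool is the Darmois--Skitovich theorem: if two linear forms $\sum_j a_j X_j$ and $\sum_j b_j X_j$ in independent non-degenerate variables $X_j$ are themselves independent, then every $X_j$ with $a_j b_j \neq 0$ is Gaussian. Applying this to any two rows of $M$, I conclude that for each non-Gaussian $r_j$, the column $M_{\cdot j}$ has at most one nonzero entry. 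Let $j_0$ denote the at most one Gaussian index among the $r$'s. Then every column $j \neq j_0$ of $M$ is monomial, and invertibility of $M$ forces these $I-1$ columns to occupy $I-1$ distinct rows, leaving a single row $i_0$ where only $M_{i_0, j_0}$ can be nonzero; hence $s_{i_0}$ is a scalar multiple of $r_{j_0}$. For any $i \neq i_0$, independence of $s_{i_0}$ and $s_i$, combined with the mutual independence of the $r_j$'s, forces $M_{i, j_0} = 0$ via a characteristic-function (or covariance) computation using non-degeneracy of $r_{j_0}$. Hence $M$ is monomial and $B = APD$ as required.

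The main obstacle is invoking the Darmois--Skitovich theorem, whose proof is a delicate functional-equation analysis of characteristic functions that characterizes Gaussianity; this is the deep analytic input that singles out the Gaussian case. Once it is in hand, the rest of the argument reduces to linear algebra (invertibility together with the monomial pattern of columns) and a routine independence check to handle the Gaussian column.
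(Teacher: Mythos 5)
Your proof is correct and follows exactly the route the paper indicates: the paper does not reprove this classical result (it cites Comon), but it states the Darmois--Skitovich theorem as the underlying mechanism, and your sufficiency argument (reduce to $M=A^{-1}B$, use Darmois--Skitovich on pairs of rows to make every non-Gaussian column monomial, then clean up the single Gaussian column by a characteristic-function computation) together with the rotation construction for necessity is the standard proof, essentially that of Comon. The only presentational caveat is that $A\mathbf{s}$ and $B\mathbf{r}$ agree in distribution rather than almost surely, so one should phrase the argument in terms of $M\mathbf{r}$ having the same (independent-component) joint law as $\mathbf{s}$ rather than writing $\mathbf{s}=M\mathbf{r}$; since every step you use is distributional, this changes nothing of substance.
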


Theorem~\ref{thm:comon} stems from the connection between linear transformations of independent variables and Gaussianity.

\begin{theorem}[{The Darmois–Skitovich theorem~\cite{darmois1953analyse,skitovitch1953property,skitovivc1962linear}}]
    Let $s_1,\ldots,s_I$ be non-degenerate independent random variables. If the linear combinations $\sum_{i=1}^I \lambda_is_i$ and $\sum_{j=1}^I \mu_j s_j$ are independent, then any $s_i$ with $\lambda_i\mu_i\neq 0$ is Gaussian. 
\end{theorem}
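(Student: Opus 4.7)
The plan is to translate the independence hypothesis into a functional equation for the characteristic functions and show that this equation forces the relevant $\psi_i := \log \phi_i$ to be quadratic polynomials near the origin, which by Marcinkiewicz's theorem implies Gaussianity. First I would drop every index with $\lambda_i \mu_i = 0$: if $\lambda_i = 0$ the variable $s_i$ appears only in $\sum_j \mu_j s_j$, and vice versa, so deleting such terms does not affect the independence of the two linear combinations. After this reduction I may assume $\lambda_i \mu_i \neq 0$ for all $i$, and the goal becomes to show that every $s_i$ is Gaussian.

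Next, writing $\phi_i(t) = \mathbb{E}[e^{\ii t s_i}]$ and using the independence of $s_1,\dots,s_I$ on the one hand and of $\sum_i \lambda_i s_i$ and $\sum_j \mu_j s_j$ on the other, I obtain
\begin{equation}
\prod_{i=1}^I \phi_i(u\lambda_i + v\mu_i) \;=\; \prod_{i=1}^I \phi_i(u\lambda_i)\cdot \prod_{i=1}^I \phi_i(v\mu_i) \qquad \text{for all } u,v \in \RR.
\end{equation}
Since each $\phi_i$ is continuous with $\phi_i(0)=1$, there is a neighborhood $U$ of the origin in which all the $\phi_i$ involved are nonzero, so I can take a continuous branch of the logarithm and set $\psi_i := \log \phi_i$ on $U$. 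The product relation becomes the additive functional equation
\begin{equation}
\sum_{i=1}^I \psi_i(u\lambda_i + v\mu_i) \;=\; \sum_{i=1}^I \psi_i(u\lambda_i) + \sum_{i=1}^I \psi_i(v\mu_i).
\end{equation}

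The core of the proof is to extract from this equation that each $\psi_i$ is a polynomial of degree at most two. The standard device is a finite-difference argument: for a small step $h$ and the shift $(u,v) \mapsto (u+h,\, v - (\lambda_1/\mu_1) h)$, the term $\psi_1(u\lambda_1 + v\mu_1)$ on the left is unchanged, so applying the corresponding difference operator kills one summand on the left-hand side while producing only first-order differences of $\psi_i$ on the right. Iterating with carefully chosen shifts that exploit $\lambda_i \mu_i \neq 0$ successively eliminates summands, until one is left with a relation saying that some high-order finite difference of each $\psi_i$ vanishes identically near the origin. This forces $\psi_i$ to agree with a polynomial of degree at most two on a neighborhood of $0$.

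Finally, I would invoke Marcinkiewicz's theorem: if $\log \phi_i$ coincides with a polynomial on a neighborhood of $0$, then $\phi_i$ is entire, its logarithm is globally a polynomial of degree at most $2$, and so $s_i$ is Gaussian (degenerate cases are ruled out by non-degeneracy of $s_i$). The main obstacle I expect is the combinatorial bookkeeping in the finite-difference cancellation step, especially making sure the shifts can be chosen to reduce the number of summands without introducing degenerate cases; the analytic input from Marcinkiewicz's theorem handles the passage from local polynomial behavior to Gaussianity cleanly once this is done.
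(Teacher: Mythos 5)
The paper itself does not prove this statement: the Darmois–Skitovich theorem is quoted as a classical result with citations, so there is no internal proof to compare against. Your outline is the standard classical route — derive the characteristic-function identity $\prod_i\phi_i(u\lambda_i+v\mu_i)=\prod_i\phi_i(u\lambda_i)\prod_i\phi_i(v\mu_i)$, pass to logarithms near the origin, run a finite-difference elimination (this is exactly the mechanism behind the Kagan–Linnik–Rao lemma the paper quotes as Theorem~\ref{thm:summandpolynomial}), and finish with Marcinkiewicz. Most steps are sound; in particular, although your justification for discarding indices with $\lambda_i\mu_i=0$ ("deleting such terms does not affect the independence") is not literally correct, it is harmless: the corresponding factors appear identically on both sides of the functional equation and cancel in a neighborhood of the origin where no $\phi_i$ vanishes, which is all the local argument needs.

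There is, however, a genuine gap: your elimination scheme tacitly assumes the vectors $(\lambda_i,\mu_i)$ are pairwise non-proportional. If $(\lambda_j,\mu_j)=c(\lambda_i,\mu_i)$ with $c\neq 0$, then the shift $(u,v)\mapsto(u+h,\,v-(\lambda_i/\mu_i)h)$ that fixes the argument $u\lambda_i+v\mu_i$ also fixes $u\lambda_j+v\mu_j$, so these summands can never be separated; the finite-difference argument then only shows that $\psi_i(t)+\psi_j(ct)$, i.e.\ the log characteristic function of $s_i+cs_j$, is a polynomial near $0$, hence that the \emph{sum} $s_i+cs_j$ is Gaussian. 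To pass from this to Gaussianity of $s_i$ and $s_j$ individually you must invoke Cram\'er's decomposition theorem (a nontrivial theorem that the classical proofs use at precisely this point, after grouping proportional indices); without it the argument is incomplete. Two smaller repairs: the difference argument yields a polynomial of degree bounded by the number of summands, not degree at most two (harmless, since Marcinkiewicz then lowers the degree), and the assertion that $\phi_i$ is entire does not follow for free from $\log\phi_i$ being a polynomial on a real neighborhood of $0$ — you need the standard fact that a characteristic function agreeing with an analytic function near the origin extends analytically to a strip, or equivalently the local form of Marcinkiewicz's theorem as in Kagan–Linnik–Rao.
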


Theorem~\ref{thm:comon} resolves the identifiability of ICA when the number of sources and observations are equal, 
the case of square mixing matrix $A \in \RR^{I \times I}$. It extends to the case of fewer sources than observations, provided the mixing matrix has full rank, see~\cite[Theorem 3]{1306473}. 

\medskip 

Our goal in this paper is to give a characterization of the identifiability of ICA that does not restrict the number of sources and observations. That is, we seek to generalize Theorem~\ref{thm:comon} to overcomplete ICA, where there are more sources than observations. Overcomplete ICA appears in sparse coding
and finding signals in speech data~\cite{lewicki2000learning}, as well as decomposing images~\cite{olshausen1995sparse,hyvarinen1999fast}. 
Algorithms for overcomplete ICA include~\cite{davies2004simple,theis2004geometric,podosinnikova2019overcomplete}.

To date, a characterization of the identifiability of overcomplete ICA has been missing. 
The following partial results are known. 
If no source is Gaussian, then~\eqref{eqn:defica} is identifiable if and only if no pair of columns in $A$ are collinear~\cite[Theorem 3]{1306473}.
If there are at least two Gaussian sources, non-identifiability holds like in the square case, as follows.  
Suppose that sources $s_1$ and $s_2$ are  Gaussian, with variances $\sigma_1$ and $\sigma_2$, respectively. Let $u_1=\lambda_1 s_1+ \lambda_2 s_2$ and $u_2= \mu_1 s_1+\mu_2 s_2$. The variables $u_1,u_2$ are Gaussian, hence they are independent if and only if they are uncorrelated. Fix non-zero $\lambda_i$ and $\mu_j$ such that $\mathbb{E}[u_1u_2]-\mathbb{E}[u_1]\mathbb{E}[u_2]=\lambda_1 \mu_1 \sigma_1+ \lambda_2 \mu_2 \sigma_2=0$ and define $\nu = \lambda_1 \mu_2 - \lambda_2 \mu_1$. Let $\mathbf{r}:=(\nu u_1, \nu u_2,s_3,\ldots,s_J)\T$. Then $A \mathbf{s}$ and $ B \mathbf{r}$ have the same distribution, where
$$
B = \begin{pmatrix}
\vdots&\vdots& \vdots & &\vdots\\
 \mu_2 \mathbf{a}_1-\mu_1 \mathbf{a}_2 & -\lambda_2 \mathbf{a}_1+\lambda_1 \mathbf{a}_2 & \mathbf{a}_3 & \cdots&\mathbf{a}_J\\
\vdots&\vdots& \vdots & & \vdots
\end{pmatrix},$$
and $\mathbf{a}_1 , \ldots, \mathbf{a}_J$ are the columns of $A \in \RR^{I \times J}$.
Matrices $A$ and $B$ are not the same up to permutation and scaling, hence identifiability does not hold.

To characterize the identifiability of overcomplete ICA, it remains to settle the case where a single source is Gaussian. 
Since identifiability is impossible with more than two Gaussian sources, we make the following definition.

\begin{definition}
\label{def:id}
A mixing matrix $A \in \RR^{I\times J}$ is \emph{identifiable} if for any non-degenerate sources $\mathbf{s}=(s_1,
\ldots,s_J)\T$ with at most one $s_j$ Gaussian, the matrix $A$ can be recovered uniquely, up to permutation and scaling of its columns, from $A \mathbf{s}$. That is, if $A \mathbf{s}$ and $B \mathbf{r}$ have the same distribution for some $B \in \RR^{I \times K}$ with $K \leq J$ and some $\mathbf{r} = (r_1, \ldots, r_K)$, with 
the same number of Gaussian entries as $\mathbf{s}$, then $J = K$ and matrices $A$ and $B$ coincide, up to permutation and scaling of columns. 
\end{definition}

\begin{remark}[Recovering mixing vs. sources]
In casual inference~\cite{shimizu2006linear}, the mixing matrix reveals the causal relationships between variables, while the source variables give the distributions of the exogenous noise. In image decomposition~\cite{hyvarinen1999fast,podosinnikova2019overcomplete}, the mixing matrix gives the image components, while the source variables follow Bernoulli distributions.
    One can recover the sources from the mixing $A$, provided $A$ has a left inverse. This holds for full rank matrices $A \in \RR^{I \times J}$ with $J \leq I$. Once $J > I$, we can no longer recover the sources, but may still recover the mixing matrix.
\end{remark}

 In this paper, identifiability refers to Definition~\ref{def:id}.
Given vector $\mathbf{v} \in \RR^I$, the rank one matrix $\mathbf{v} \mathbf{v}\T$ is denoted $\mathbf{v}^{\otimes 2}$.
Our first contribution is an if and only if characterization of the identifiability of ICA, with no restrictions on the number of sources or observations. 

\begin{theorem}\label{thm: iff for identifiablity}
Fix $A\in \RR^{I\times J}$ with columns $\fa_{1},\ldots, \fa_{J}$ and no pair of columns collinear. Then $A$ is identifiable if and only if the linear span of $\fa_1^{\otimes 2},\ldots, \fa_J^{ \otimes 2}$ does not contain any real matrix $\mathbf{b}^{\otimes 2}$ unless $\mathbf{b}$ is collinear to $\fa_j$ for some $j \in \{ 1, \ldots, J\}$.
\end{theorem}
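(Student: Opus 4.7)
The proof strategy is to separate Gaussian and purely non-Gaussian content of each source via Cramér's theorem, identify the non-Gaussian columns via Eriksson--Koivunen~\cite[Theorem 3]{1306473}, and use the span condition to pin down the Gaussian column. Each source decomposes as $s_j = Y_j + Z_j$ with $Y_j \sim \mathcal{N}(0, \gamma_j)$ independent of a purely non-Gaussian $Z_j$ (degenerate precisely when $s_j$ is itself Gaussian), and similarly $r_k = Y_k^{(r)} + Z_k^{(r)}$. This induces a decomposition of the random vector $A\mathbf{s} = G_A + N_A$ with $G_A = \sum_j \mathbf{a}_j Y_j$ Gaussian and $N_A = \sum_j \mathbf{a}_j Z_j$ having no Gaussian factor, and analogously for $B\mathbf{r}$. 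The Cramér--Lévy uniqueness theorem for random vectors forces $G_A \overset{d}{=} G_B$ and $N_A \overset{d}{=} N_B$ whenever $A\mathbf{s} \overset{d}{=} B\mathbf{r}$.

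For the ``only if'' direction I argue by contrapositive. Suppose $\mathbf{b}^{\otimes 2} = \sum_j c_j \mathbf{a}_j^{\otimes 2}$ with $\mathbf{b}$ not collinear to any $\mathbf{a}_j$. Taking trace gives $\|\mathbf{b}\|^2 = \sum_j c_j \|\mathbf{a}_j\|^2 > 0$, so some $c_j > 0$; after relabeling assume $c_1 > 0$. Choose $s_1 \sim \mathcal{N}(0, c_1)$ and $s_j = Y_j + Z_j$ for $j \geq 2$, with $Y_j \sim \mathcal{N}(0, \gamma_j)$, $\gamma_j \geq \max(0, c_j)$, and $Z_j$ a non-degenerate non-Gaussian variable, all independent. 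Let $B$ have columns $\mathbf{b}, \mathbf{a}_2, \ldots, \mathbf{a}_J$, set $r_1 \sim \mathcal{N}(0, 1)$, and let $r_j = Y_j' + Z_j$ with $Y_j' \sim \mathcal{N}(0, \gamma_j - c_j)$ for $j \geq 2$. A direct characteristic-function check, using the identity $(\mathbf{b} \cdot \mathbf{t})^2 = \sum_j c_j (\mathbf{a}_j \cdot \mathbf{t})^2$ (equivalent to the span hypothesis), yields $A\mathbf{s} \overset{d}{=} B\mathbf{r}$. Both source vectors have exactly one Gaussian entry and all non-degenerate components, yet $A$ and $B$ differ up to column scaling and permutation because $\mathbf{b}$ is not collinear to any column of $A$, witnessing non-identifiability.

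For the ``if'' direction, assume $A\mathbf{s} \overset{d}{=} B\mathbf{r}$ with $K \leq J$ and matching Gaussian counts. Applying~\cite[Theorem 3]{1306473} to $N_A \overset{d}{=} N_B$ (whose non-zero summands are purely non-Gaussian and attached to precisely the non-Gaussian columns of $A$ and $B$) yields a bijection $\pi$ pairing each non-Gaussian $\mathbf{a}_j$ with a non-Gaussian $\mathbf{b}_{\pi(j)}$ collinear to it, and forces $B$'s non-Gaussian columns to be pairwise non-collinear. Since the Gaussian counts and non-Gaussian counts match separately, $J = K$ and $B$ has a unique Gaussian column $\mathbf{b}_1$. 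The equality $G_A \overset{d}{=} G_B$ gives $\sum_j \gamma_j \mathbf{a}_j^{\otimes 2} = \sum_k \gamma_k^{(r)} \mathbf{b}_k^{\otimes 2}$; substituting $\mathbf{b}_{\pi(j)} = \mathbf{a}_j/\lambda_j$ isolates $\gamma_1^{(r)} \mathbf{b}_1^{\otimes 2}$ as a real linear combination of the $\mathbf{a}_j^{\otimes 2}$'s. Because $\gamma_1^{(r)}>0$, the span hypothesis forces $\mathbf{b}_1$ to be collinear to some $\mathbf{a}_{j^*}$.

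The main obstacle is to rule out the case $j^* \neq 1$, where $\mathbf{b}_1$ would be collinear to a non-Gaussian $\mathbf{a}_{j^*}$, and hence to $\mathbf{b}_{\pi(j^*)}$. In this scenario, matching the supports of $A\mathbf{s}$ and $B\mathbf{r}$ forces $\mathbf{a}_1 \in \Span(\mathbf{a}_j : j \neq 1)$. Writing $\mathbf{a}_1 = \sum_{j \neq 1} \beta_j \mathbf{a}_j$ and rewriting $A\mathbf{s} = \sum_{j \neq 1} \mathbf{a}_j (\beta_j s_1 + s_j)$ exhibits effective sources $\beta_j s_1 + s_j$ that share the Gaussian $s_1$, while the analogous rewriting of $B\mathbf{r}$ -- obtained by merging $\mathbf{b}_1$ with $\mathbf{b}_{\pi(j^*)}$ along the common line $\mathbf{a}_{j^*}$ -- uses effective sources that are mutually independent. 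Comparing second-order cross-cumulants, $\Cov(\beta_j s_1 + s_j, \beta_{j'} s_1 + s_{j'}) = \beta_j \beta_{j'} \gamma_1$ must vanish for every pair $j \neq j'$, a Darmois--Skitovich-style conclusion that forces at most one $\beta_j$ to be non-zero. This makes $\mathbf{a}_1$ collinear to some $\mathbf{a}_{j_0}$ with $j_0 \neq 1$, contradicting the no-collinearity assumption on $A$. Hence $j^* = 1$, $\mathbf{b}_1$ is collinear to $\mathbf{a}_1$, and together with the bijection $\pi$ this gives $A = B$ up to permutation and scaling of columns.
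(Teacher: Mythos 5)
Your ``only if'' direction is correct and is essentially the paper's own construction in Proposition~\ref{prop: necessary con for identifiability}: given $\mathbf{b}^{\otimes 2}=\sum_j c_j\fa_j^{\otimes 2}$ you shuffle Gaussian summands between the two source vectors so that the second characteristic functions agree, and the trace computation is a clean way to locate a positive coefficient for the normalization. The gaps are all in the ``if'' direction.

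First, the decomposition $A\mathbf{s}=G_A+N_A$ together with the claim that $N_A$ ``has no Gaussian factor,'' and hence that $A\mathbf{s}\overset{d}{=}B\mathbf{r}$ forces $G_A\overset{d}{=}G_B$ and $N_A\overset{d}{=}N_B$, is unjustified --- and it is precisely the hard part of the theorem. Even granting that each scalar source has a maximal Gaussian component $s_j=Y_j+Z_j$, it does not follow that the convolution $\sum_j\fa_jZ_j$ of the Gaussian-factor-free parts is itself Gaussian-factor-free: Cram\'er's theorem controls when the \emph{whole} law is Gaussian, not when it has a Gaussian \emph{factor}, and whether Gaussian-factor-freeness survives convolution is a delicate question in the arithmetic of distributions that you do not address. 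There is also no off-the-shelf ``Cram\'er--L\'evy uniqueness theorem'' for the multivariate splitting you invoke; in $\RR^I$ the admissible Gaussian covariances are only partially ordered, so a maximal one need not be unique. The paper circumvents all of this: it uses Theorem~\ref{thm:general identifiability result} to match the non-Gaussian columns, writes the identity \eqref{eq:secondchar} of second characteristic functions, and then invokes \cite[Lemma A.2.4]{kagan1973characterization} (Theorem~\ref{thm:summandpolynomial}) to conclude that each $\Psi_{s_j}-\Psi_{r_j}$ is a polynomial near the origin; that lemma is what legitimately extracts the degree-two relation $\mathbf{b}^{\otimes2}=\fa_J^{\otimes2}+\sum_j\lambda_j\fa_j^{\otimes2}$ without decomposing the individual sources.

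Second, your argument ruling out $j^*\neq 1$ fails in the overcomplete regime. The support comparison only yields $\fa_1\in\Span\{\fa_j:j\neq1\}$ as \emph{vectors}, which is automatic whenever $J>I$ and the columns span $\RR^I$, so it carries no information. More seriously, the step ``comparing second-order cross-cumulants $\ldots$ $\Cov(\beta_js_1+s_j,\beta_{j'}s_1+s_{j'})$ must vanish'' presupposes that the covariance of the observed vector determines the covariance matrix of the effective sources, i.e.\ that the matrices $\fa_j\fa_{j'}\T+\fa_{j'}\fa_j\T$ are linearly independent; for $J>I$ there are ${J\choose 2}$ such matrices in a space of dimension ${I+1\choose 2}$, so this fails, and two different effective-source covariances can produce the same observed covariance. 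What the computation in the case $j^*\neq1$ actually yields is $\fa_1^{\otimes2}\in\Span\{\fa_j^{\otimes2}:j\neq1\}$, i.e.\ the outer product of the Gaussian column lies in the span of the others --- a degeneracy that the stated span hypothesis does not forbid, and which therefore cannot be excluded by a Darmois--Skitovich-style covariance argument. This case has to be disposed of by a different mechanism (the paper's proof does so by asserting that $B$ has no pair of collinear columns); your route does not close it.
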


\begin{figure}[htbp]
\centering
    \includegraphics[width=8cm]{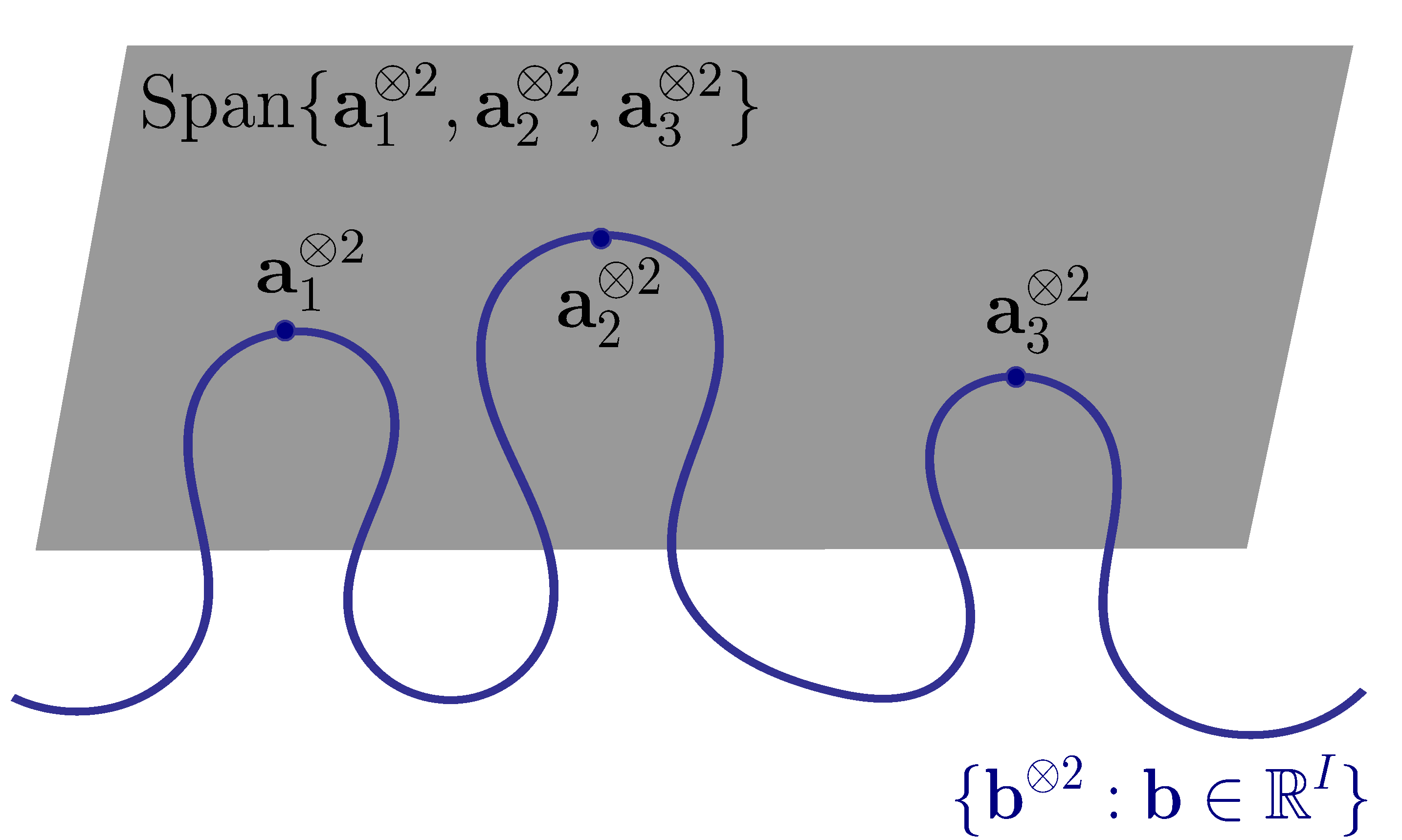}
    \caption{Illustration of Theorem~\ref{thm: iff for identifiablity}}
\end{figure}

The $d$-th cumulant of a distribution on $\RR^I$ is a symmetric order $d$ tensor of format $I \times \cdots \times I$ that encodes properties of the distribution~\cite[Chapter 2]{mccullagh2018tensor}. 
The $d$-th cumulant of $\mathbf{x} = A \mathbf{s}$ is 
$$ \kappa_d=\sum\limits_{j=1}^J \lambda_j \mathbf{a}_j^{\otimes d},$$
where $\mathbf{s} = (s_1, \ldots, s_J)$ has independent entries, the scalar $\lambda_j$ is the $d$-th cumulant of $s_j$, and $\mathbf{a}_j^{\otimes d}$ is the tensor with $(i_1, \ldots, i_d)$ entry $(\mathbf{a}_j)_{i_1} \cdots (\mathbf{a}_j)_{i_d}$.
This follows from the fact that the cumulant tensor of a vector of independent entries is diagonal and from the multilinearity property of cumulants.

Theorem~\ref{thm: iff for identifiablity} may be surprising at first, since it only uses second order information about the matrix $A$. We might have expected a condition that involves terms $\mathbf{a}_j^{\otimes d}$ from higher-order cumulants.
However, since for a Gaussian
all cumulants of order greater than two are zero, our characterization turns out to only depend on the second powers $\mathbf{a}_j^{\otimes 2}$.

Theorem \ref{thm: iff for identifiablity} implies the identifiability of square ICA, as follows.

\begin{example}
Let $\fx =A \mathbf{s}$, where $A \in \RR^{I \times I}$ is invertible and $\mathbf{s}$ is a vector of independent sources with one source Gaussian. 
Assume there exists $\mathbf{b} \in \RR^I$ with 
$\mathbf{b}^{\otimes 2} = \sum_{i = 1}^I \lambda_i \mathbf{a}_i^{\otimes 2}$. After a change of basis, we have
$(\mathbf{b}^\prime)^{\otimes 2} = \sum_{i = 1}^I \lambda_i \mathbf{e}_i^{\otimes 2}$, where the $\mathbf{e}_i$ are elementary basis vectors, since the columns $\mathbf{a}_i$ are linearly independent. 
Hence $\mathbf{b}^\prime$ is a diagonal rank one matrix. Therefore $\mathbf{b}^\prime$ is parallel to $\mathbf{e}_i$ for some $i$, so $\mathbf{b}$ is parallel to $\mathbf{a}_i$ for some $i$.
The model is then identifiable, by Theorem \ref{thm: iff for identifiablity}. 
\end{example}

\noindent The following examples illustrate Theorem~\ref{thm: iff for identifiablity} in overcomplete settings.

\begin{example}
\label{ex:id}
Consider the mixing matrix
\begin{equation}
    \label{eqn:id_A}
    A=\begin{pmatrix}
    1&0&0&0&1&0\\
    0&1&0&0&1&1\\
    0&0&1&0&1&1\\
    0&0&0&1&0&1
\end{pmatrix}.
\end{equation}
No pair of columns of $A$ are collinear. Let $\mathbf{b}^{\otimes 2} = \sum_{j = 1}^6 \lambda_j \fa_j^{\otimes 2}$. 
Then 
$$ \mathbf{b}^{\otimes 2} = \begin{bmatrix} \lambda_1 + \lambda_5 & \lambda_5 & \lambda_5 & 0 \\ \lambda_5 & \lambda_2 + \lambda_5  + \lambda_6 & \lambda_5 + \lambda_6 & \lambda_6 \\ 
\lambda_5 & \lambda_5 + \lambda_6 & \lambda_3+ \lambda_5 + \lambda_6 & \lambda_6 \\ 
0 & \lambda_6 & \lambda_6 & \lambda_4 + \lambda_6 \end{bmatrix} .$$
The $2 \times 2$ minors of this matrix vanish, since $\mathbf{b}^{\otimes 2}$ has rank one. This cannot happen unless all but one $\lambda_i$ is zero, as can be seen from a Macaulay2~\cite{grayson2002macaulay2} computation, so $\mathbf{b}$ is collinear to one of the $\mathbf{a}_j$. Hence $A$ is identifiable, by Theorem~\ref{thm: iff for identifiablity}.

To explain the condition in Theorem~\ref{thm: iff for identifiablity}, 
we show directly that $A$ is identifiable. 
To simplify our exposition, we assume that the non-Gaussian sources have non-vanishing fourth cumulants. Suppose $s_1,\ldots,s_5$ are the non-Gaussian sources. A tensor of the form $\sum_{j=1}^5 \lambda_j \fa_j^{\otimes 4}$ has a unique tensor decomposition, 
by Kruskal's criterion~\cite{KRUSKAL197795}. Hence columns $\fa_1,\ldots,\fa_5$ can be recovered uniquely, up to permutation and scaling. The covariance matrix of $A\mathbf{s}$ has the form $\sum_{j=1}^6 \mu_j\fa_j^{\otimes 2}$. If there are two candidates $\mathbf{a}_6$ and $\mathbf{b}$ for the last column, then $\mathbf{b}^{\otimes 2}  \in \Span \{ \mathbf{a}_j^{\otimes 2} : j = 1, \ldots, 6 \}$.
\end{example}

\begin{example}
\label{ex:nonid}
The mixing matrix
\begin{equation}
    \label{eqn:non-id_A}
A =\begin{pmatrix}
    1&0&1\\
    0&1&1
\end{pmatrix}
\end{equation}
does not satisfy the condition in Theorem~\ref{thm: iff for identifiablity}, since 
 $\mathbf{b}^{\otimes 2} = -(\fa_1)^{\otimes 2}+2(\fa_2)^{\otimes 2}+2(\fa_3)^{\otimes 2}$ holds for $\mathbf{b} =(1,2)$. Hence $A$ 
is non-identifiable.
We exhibit the non-identifiability, as follows. 
Suppose $s_1,s_2,s_3$ follow exponential distributions with parameter~$1$, that $r_1,r_2$ follow standard Gaussian distributions, and that $s_1,s_2,s_3,r_1,r_2$ are independent. 
Then 
$$ \begin{pmatrix}
    1&0&1\\
    0&1&1
\end{pmatrix} \begin{pmatrix} s_1 \\ s_2-r_1+r_2 \\ r_1+r_2\end{pmatrix} = \begin{pmatrix}
    1&0&1\\
    0&1&2
\end{pmatrix} \begin{pmatrix} s_1+r_1 \\ s_2 \\ r_2 \end{pmatrix}. $$
 Both source vectors have independent entries with the last coordinate Gaussian, since $r_1+r_2$ and $-r_1+r_2$ are independent Gaussians.
\end{example}

Our second contribution characterizes whether a generic matrix $A \in \RR^{I\times J}$ is identifiable. A generic matrix is one that lies outside of a set defined by the vanishing of some equations. In particular, genericity holds almost surely in $\RR^{I \times J}$.

\begin{theorem}\label{thm: real generic matrix result}
Let $A \in \RR^{I \times J}$ be generic. Then
\begin{enumerate}
    \item If $J \leq {I \choose 2}$ 
    or if $(I, J) = (2, 2)$ or $(3, 4)$, then $A$ is identifiable;
    \item If $J= {I\choose 2}+1$, where $I\geq 4$ and $I \equiv 2,3 \mod 4$, then there is positive probability that $A$ is identifiable and a positive probability that $A$ is non-identifiable;
    \item If $J> {I\choose 2}+1$ or if $J= {I\choose 2}+1$ and $I\equiv 0,1 \mod 4$, then $A$ is non-identifiable. 
\end{enumerate}
\end{theorem}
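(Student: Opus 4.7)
The plan is to reduce to a question about linear sections of the Veronese variety, via Theorem~\ref{thm: iff for identifiablity}. Let $X = \nu_2(\PP^{I-1}) \subset \PP^N$ be the second Veronese variety, where $N = \binom{I+1}{2}-1$, and let $L_A$ be the projective span of $p_j := [\fa_j^{\otimes 2}]$ for $j=1,\ldots,J$. Then $A$ is identifiable iff $L_A \cap X(\RR) = \{p_1,\ldots,p_J\}$. Since $\dim X = I-1$ and $\deg X = 2^{I-1}$, the expected dimension of $L_A\cap X$ is $J-1-\binom{I}{2}$, and the three cases of the theorem correspond to the sign and parity of this quantity.

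For part~(1), I would establish that for $J \leq \binom{I}{2}$ the span $L_A$ has negative expected intersection dimension with $X$, via an incidence variety argument. Consider
\[
Z = \{(A,[\fb]) \in \CC^{I\times J}\times \PP^{I-1}_{\CC} : \fb^{\otimes 2} \in V_A,\ [\fb]\neq [\fa_j]\ \forall j\},
\]
with $V_A = \Span\{\fa_j^{\otimes 2}\}$. Projecting to $\PP^{I-1}_{\CC}$ and using the determinantal codimension $\binom{I+1}{2}-J$ of the condition $\fb^{\otimes 2} \in V_A$ for fixed $\fb$, one obtains $\dim Z \leq IJ - (\binom{I}{2}-J+1) < IJ$ when $J\leq \binom{I}{2}$. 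Hence the projection $Z\to \CC^{I\times J}$ is not dominant and the generic real $A$ is identifiable. The special cases $(2,2)$ and $(3,4)$ fall out of the boundary analysis below, where the parameter $r$ defined there vanishes.

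For parts~(2) and~(3) with $J\geq \binom{I}{2}+1$, I would analyse $L_A\cap X$ by intersection theory. If $J\geq \binom{I}{2}+2$ the expected complex dimension is at least $1$; since $X$ is smooth at each $p_j$ and the intersection is transverse there for generic $A$, the real implicit function theorem produces a positive-dimensional real analytic neighborhood of $p_j$ in $L_A(\RR)\cap X(\RR)$, giving infinitely many real rank-one matrices in $V_A$ other than the $\fa_j^{\otimes 2}$ and hence non-identifiability.

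The borderline case $J=\binom{I}{2}+1$ governs the $I\bmod 4$ dichotomy. Here $L_A$ has codimension $\dim X$ in $\PP^N$, so by B\'ezout the scheme $L_A\cap X$ has length $\deg X = 2^{I-1}$ for generic $A$, comprising the $\binom{I}{2}+1$ real starting points together with $r := 2^{I-1}-\binom{I}{2}-1$ further complex intersection points. A direct check gives $r=0$ exactly for $I\in\{2,3\}$, settling the special cases in part~(1). For $I\geq 4$, the parity of $r$ is odd precisely when $I\equiv 0,1\pmod 4$, since $2^{I-1}$ is even while $\binom{I}{2} = I(I-1)/2$ is even in that range and odd otherwise. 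As $L_A\cap X$ is defined over $\RR$, complex conjugation pairs up the non-real extras and fixes the real ones; an odd $r$ therefore forces at least one real extra, yielding non-identifiability (part~(3)), while an even $r$ is consistent with either zero real extras (identifiability) or a positive number (non-identifiability).

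The main obstacle is part~(2): showing that both outcomes genuinely occur on non-empty open subsets of $\RR^{I\times J}$ when $I\equiv 2,3\pmod 4$. My approach is to construct one explicit $A$ with a symmetric structure (e.g.\ from an orthogonal frame or a group-orbit configuration) forcing a real extra among the $r$ intersection points, and a second $A$ obtained by perturbing from a degenerate configuration where the $r$ extras coincide, so that a small generic perturbation separates them into $r/2$ complex conjugate pairs. Openness of each condition in the real semi-algebraic topology then secures positive probability for both cases.
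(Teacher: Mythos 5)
Your overall framework (reduce to real points of $L_A \cap \Vcal_I$ via Theorem~\ref{thm: iff for identifiablity}, use $\deg \Vcal_I = 2^{I-1}$, and run a conjugation-parity argument when $J = \binom{I}{2}+1$) matches the paper, and your parity computation for part~(3) is correct. Your treatment of $J \geq \binom{I}{2}+2$ via transversality at a real point $p_j$ and the implicit function theorem is a genuinely different, and arguably cleaner, route than the paper's (which passes to a projected Veronese variety of degree $2^{I-1}-I$ and uses an odd-degree-implies-real-point lemma plus a slicing trick for $I \equiv 2 \bmod 4$); it would be worth writing out the transversality of $\Span\{p_1,\ldots,p_J\}$ with $T_{p_j}\Vcal_I$, which does follow from non-degeneracy of the Veronese. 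However, there are two genuine gaps. First, in part~(1) you assert that for fixed $[\fb]$ the condition $\fb^{\otimes 2}\in V_A$ has codimension $\binom{I+1}{2}-J$ in $A$-space. That is the \emph{expected} codimension of the rank condition, but the matrix whose rank drops has constrained entries (symmetric squares of the columns), and preimages of determinantal loci under such parametrizations can have strictly smaller codimension. Indeed, your argument nowhere uses non-degeneracy or irreducibility of $\Vcal_I$, and the identical count applied to a linearly embedded $\PP^2\subset\PP^5$ with $J=2$ would "prove" that a general secant line meets it in only two points, which is false. This is precisely the content of the Generalized Trisecant Lemma that the paper invokes; without it (or a careful replacement), part~(1) is not established. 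Relatedly, your B\'ezout count of $2^{I-1}$ \emph{distinct} points when $J=\binom{I}{2}+1$ also needs justification, since $L_A$ is constrained to be spanned by Veronese points and is not a generic linear space of its codimension (the paper's Lemma~\ref{lemma: generic A.A intersects in 2 power I-1 points}).

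The second and more serious gap is part~(2), which you correctly flag as the main obstacle but do not resolve. "An orthogonal frame or group-orbit configuration forcing a real extra" and "perturbing from a degenerate configuration so the extras split into conjugate pairs" are not constructions: a generic real perturbation of a real double point can split it into either two real points or a conjugate pair, and you would need to control this simultaneously for $r/2$ coincident extras \emph{while remaining inside the family of linear spaces spanned by $J$ points of $\Vcal_I$}. That last constraint is the crux: it is not enough to exhibit a system of $I-1$ real quadrics with the desired number of real solutions (the paper does this by induction in Theorem~\ref{thm: quadrics intersect even integer positive probability}); one must also show such a system is realized by some matrix $A$, which requires knowing that $J$ of the intersection points are linearly independent and hence span the linear space (the paper's Corollary~\ref{cor: generic linear space and generic A}). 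As written, part~(2) is a statement of intent rather than a proof.
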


To prove Theorem~\ref{thm: real generic matrix result}, we first studying the identifiability condition in Theorem~\ref{thm: iff for identifiablity} over the complex numbers. Then we specialize to the real numbers, proving the following. No extra complex solutions implies no extra real solutions. If there are a finite number $k$ of extra complex solutions, the presence of extra real solutions depends on the parity of $k$: if odd, there is an extra solution, but if even, there may or may not be. If there are infinitely many extra complex solutions, then we show that there is an extra real solution.

Our third contribution is an algorithm to recover the mixing matrix $A \in \RR^{I\times J}$, via tensor decomposition.
ICA has close connections to tensor decomposition. Comon explained how to obtain the mixing matrix from the higher order cumulant tensors of the observed variables~\cite{comon1994independent}. 
When the sources are non-Gaussian and the number of sources is at most the number of observations, tensor algorithms include JADE~\cite{cardoso1993blind}, which uses fourth order cumulants, STOTD~\cite{de2001independent}, which uses third order cumulants, and algorithms that combine cumulants of different orders~\cite{moreau2001generalization}. 
For overcomplete ICA, algorithms include FOOBI~\cite{de2007fourth}, which uses fourth order cumulants and BIRTH, which uses hexacovariance (the flattening of the six order cumulant)~\cite{albera2004blind}. 
However, to our knowledge, there do not exist algorithms that apply to the setting where one of the sources is Gaussian.
Having a Gaussian source is natural to represent noise or, in practice, to allow sources that are close to Gaussian~\cite{sokol2014quantifying}.

\begin{algorithm}[htbp]
\caption{Recover $A$ from the second and fourth order cumulants of $\mathbf{x}$}\label{alg:recover A}
\begin{algorithmic}[1]
\renewcommand{\algorithmicrequire}{\textbf{Input:}}
\Require Second and fourth order cumulants $\kappa_2,\kappa_4$ of $\mathbf{x} =A\mathbf{s}$ for matrix $A \in \RR^{I \times J}$ and independent sources $\mathbf{s}$ with one Gaussian $s_J$. 
\State \textbf{The first $J-1$ columns of $A$:} Compute the symmetric tensor decomposition of $\kappa_4$ to recover $\fa_1, \ldots, \fa_{J-1}$, up to permutation and scaling.
\State \textbf{The last column of $A$:} Find a rank one matrix in $\Span\{\fa_1^{\otimes 2},\ldots, \fa_{J-1}^{\otimes 2},\kappa_2\}$ that is not collinear to $\fa_1^{\otimes 2},\ldots, \fa_{J-1}^{\otimes 2}$, by initializing at a random rank one matrix and a random point in the span and minimizing the distance between them using Powell's method~\cite{powell1964efficient}. \renewcommand{\algorithmicrequire}{\textbf{Output:}}
\Require Matrix $A \in \RR^{I \times J}$ with columns $\mathbf{a}_1, \ldots, \mathbf{a}_J$.
\end{algorithmic}
\end{algorithm}

Our algorithm uses~\cite[Algorithm 1]{kileel2019subspace} in the first step. We could, in principle, use any tensor decomposition algorithm here. We use this method because of its compatibility with our second step: both look for rank one matrices or tensors in a linear space.

We apply our algorithm to synthetic data, where it corroborates our identifiability results, to the CIFAR-10 dataset~\cite{cifar10} of images, to incorporate Gaussian noise into bases of image patches, cf.~\cite{podosinnikova2019overcomplete}, and to protein signalling~\cite{sachs2005causal}, to incorporate Gaussian noise into the causal structure learning algorithm of~\cite{shimizu2006linear}.

The rest of the paper is organized as follows. We prove Theorem~\ref{thm: iff for identifiablity} in Section~\ref{section: symmetric rank one matrices}. We relate identifiability to systems of quadrics in Section~\ref{sec:iden to quadrics} and study these quadrics in Section~\ref{sec:quadrics}. We prove Theorem~\ref{thm: real generic matrix result} in Section~\ref{sec:complex_to_real}. We study the identifiability of special matrices in Section~\ref{section: special matrices}. Our numerical results are in Section~\ref{section: numerical result}.

\section{Characterization of identifiability}\label{section: symmetric rank one matrices}

We prove Theorem~\ref{thm: iff for identifiablity}, our characterization of the identifiability of ICA.

\subsection{Sufficiency}

We show that the condition of Theorem~\ref{thm: iff for identifiablity} is sufficient for identifiability of ICA.

\begin{proposition}\label{prop: sufficient con for identifiability}
Fix $A\in \RR^{I\times J}$ with columns $\fa_1, \ldots, \fa_J$, and no pair of columns collinear. Then $A$ is identifiable if the linear span of the rank one matrices $\fa_1^{\otimes 2},\ldots,\fa_J^{ \otimes 2}$ does not contain any real rank one matrix $\mathbf{b}^{\otimes 2}$, unless $\mathbf{b}$ is collinear to $\fa_j$ for some $j \in \{ 1, \ldots, J\}$.
\end{proposition}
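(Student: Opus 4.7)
The plan is to suppose $A\mathbf{s}\stackrel{d}{=}B\mathbf{r}$ for non-degenerate independent sources $\mathbf{s},\mathbf{r}$ with the same number (at most one) of Gaussian entries, and deduce that $A=B$ up to permutation and scaling of columns. The case where neither source vector has a Gaussian entry is immediate from the no-Gaussian identifiability result~\cite[Theorem 3]{1306473}, since the assumption that no two columns of $A$ are collinear is already part of the hypothesis. Hence the main work is in the case where $\mathbf{s}$ has exactly one Gaussian entry $s_J$ and $\mathbf{r}$ has exactly one Gaussian entry $r_K$.

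Equating the log-characteristic functions of $A\mathbf{s}$ and $B\mathbf{r}$ and reading off the Taylor coefficient of each order $d$ gives the cumulant identity
\[ \sum_{j=1}^{J} \kappa_d(s_j)\,\fa_j^{\otimes d} \;=\; \sum_{k=1}^{K} \kappa_d(r_k)\,\fb_k^{\otimes d}. \]
For $d\geq 3$ the Gaussian contributions vanish, so the sums effectively range over $j<J$ and $k<K$; for $d=2$ they contribute the extra terms $\kappa_2(s_J)\fa_J^{\otimes 2}$ and $\kappa_2(r_K)\fb_K^{\otimes 2}$. My plan is to first use the $d\geq 3$ identities to match the non-Gaussian columns, and then use the $d=2$ identity together with the rank-one hypothesis to match the Gaussian columns.

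For the matching of non-Gaussian columns, I would reassemble the $d\geq 3$ cumulant identities into an identity of characteristic-function logs
\[ \sum_{j<J}\psi_{s_j}(\fa_j\T t) - \sum_{k<K}\psi_{r_k}(\fb_k\T t) = q(t), \]
where $q$ is a polynomial of degree at most two. By adjoining a suitable independent Gaussian noise vector to each side, this identity transforms into a distributional equation $A'\mathbf{s}'+G_1 \stackrel{d}{=} B'\mathbf{r}'+G_2$ between two mixtures with independent non-Gaussian sources plus an additive Gaussian. From here, a Darmois–Skitovich style argument, or equivalently~\cite[Theorem 3]{1306473} applied after absorbing the Gaussian into an extra column of the mixing matrix, yields $J-1=K-1$ and a permutation $\pi$ with $\fa_j\parallel\fb_{\pi(j)}$ for $j=1,\ldots,J-1$. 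The main obstacle is implementing this absorption cleanly, since the absorbed Gaussian may introduce columns that are collinear with existing ones, and some bookkeeping is required to preserve the applicability of the cited result.

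With the non-Gaussian columns matched and $J=K$, the $d=2$ cumulant identity reduces to
\[ \kappa_2(r_K)\,\fb_K^{\otimes 2} \;\in\; \Span\{\fa_1^{\otimes 2},\ldots,\fa_J^{\otimes 2}\}. \]
By the hypothesis of the proposition, $\fb_K$ must be collinear with some $\fa_j$; since the columns $\fa_1,\ldots,\fa_{J-1}$ have already been paired with $\fb_1,\ldots,\fb_{J-1}$ and no two $\fa_j$'s are collinear, we obtain $\fb_K\parallel\fa_J$, completing the identification of $A$ and $B$ up to permutation and scaling.
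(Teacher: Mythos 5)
There is a genuine gap: your argument presupposes that all sources have cumulants of every order. Definition~\ref{def:id} quantifies over \emph{arbitrary} non-degenerate sources, which may have no finite moments at all (heavy-tailed examples), so the second characteristic functions $\Psi_{s_j},\Psi_{r_k}$ need not be differentiable at the origin and you cannot ``read off the Taylor coefficient of each order $d$'' to get the identities $\sum_j \kappa_d(s_j)\fa_j^{\otimes d}=\sum_k\kappa_d(r_k)\fb_k^{\otimes d}$; in particular your final ``$d=2$ cumulant identity'' already assumes finite variances. This is exactly the point the paper flags after its proof: second characteristic functions may not have Taylor expansions. The paper's route avoids this by working only with the exact identity of second characteristic functions (Theorem~\ref{thm:secondchar}), which after matching the non-Gaussian columns reads $\sum_{j=1}^{J-1}(\Psi_{s_j}-\Psi_{r_j})(\mathbf{u}\T\fa_j)-\tfrac12(\mathbf{u}\T\fa_J)^2+\tfrac12(\mathbf{u}\T\mathbf{b})^2=0$, and then invoking Kagan's Lemma~A.2.4 (Theorem~\ref{thm:summandpolynomial}) to conclude that each difference $\Psi_{s_j}-\Psi_{r_j}$ is a polynomial near the origin; only then can one extract the degree-two part and land in $\Span\{\fa_1^{\otimes 2},\ldots,\fa_J^{\otimes 2}\}$. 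Without an ingredient of this type your plan fails on sources lacking moments.

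A secondary problem is the matching of the non-Gaussian columns. Deriving the identity $\sum_{j<J}\psi_{s_j}(\fa_j\T t)-\sum_{k<K}\psi_{r_k}(\fb_k\T t)=q(t)$ ``from the $d\geq 3$ cumulant identities'' is both circular (same moment-existence issue) and unnecessary: it follows directly from Theorem~\ref{thm:secondchar}, since the Gaussian entries contribute quadratics. More importantly, your proposed absorption of the Gaussian into an extra column so as to apply~\cite[Theorem 3]{1306473} is not licensed, because that theorem assumes \emph{no} source is Gaussian. The correct tool, and the one the paper uses, is Theorem~\ref{thm:general identifiability result} (Kagan et al., Theorem~10.3.1): the mixing matrix of the non-Gaussian part is unique up to permutation and scaling even in the presence of an independent Gaussian component. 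Your ``Darmois--Skitovich style argument'' would essentially amount to re-proving that theorem, and as written the bookkeeping you acknowledge as ``the main obstacle'' is precisely what it supplies.
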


For the proof, we use the second characteristic function, the cumulant generating function. We also use the following results: 
Theorem~\ref{thm:secondchar} relates the second characteristic function of a linear mixing to its sources, Theorem~\ref{thm:general identifiability result} settles the uniqueness of the mixing matrix of the non-Gaussian sources, and Theorem~\ref{thm:summandpolynomial} turns the study of second characteristic functions into degree two equations.

\begin{theorem}[{\cite[Proposition 9.4]{comon2010handbook}}]\label{thm:secondchar}
Let $\mathbf{x} = A \mathbf{s}$ with the entries of $\mathbf{s}$ independent. Then
$$
\Psi_\mathbf{x}(\mathbf{u})=\sum_{j=1}^J \Psi_{s_j}(\mathbf{u}\T \mathbf{a}_j),
$$
in a neighbourhood of the origin,
where $\Psi_\mathbf{x}$ and $\Psi_{s_j}$ are the second characteristic functions of the random variables $\mathbf{x}$ and $s_j$.
\end{theorem}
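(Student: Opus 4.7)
The plan is to unwind the definitions: the second characteristic function $\Psi_Y$ of a random vector $Y$ is $\Psi_Y(\mathbf{u}) = \log \phi_Y(\mathbf{u})$, where $\phi_Y(\mathbf{u}) = \mathbb{E}[e^{\ii \mathbf{u}\T Y}]$ is the first characteristic function, with the logarithm taken as the principal branch in a neighbourhood of the origin where $\phi_Y$ takes values close to $1$. So the whole statement is a computation once we combine linearity of $\mathbf{x} = A\mathbf{s}$ with the independence of the $s_j$.

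First, I would rewrite $\phi_\mathbf{x}(\mathbf{u}) = \mathbb{E}[e^{\ii \mathbf{u}\T A \mathbf{s}}] = \mathbb{E}[e^{\ii (A\T \mathbf{u})\T \mathbf{s}}]$, so $\phi_\mathbf{x}(\mathbf{u}) = \phi_\mathbf{s}(A\T \mathbf{u})$. Writing $A\T \mathbf{u} = (\mathbf{u}\T \mathbf{a}_1, \ldots, \mathbf{u}\T \mathbf{a}_J)\T$, this equals $\mathbb{E}\bigl[\prod_{j=1}^J e^{\ii (\mathbf{u}\T \mathbf{a}_j) s_j}\bigr]$. Second, I would invoke the independence of the entries $s_1, \ldots, s_J$ of $\mathbf{s}$ to pass the product outside the expectation, giving
$$\phi_\mathbf{x}(\mathbf{u}) = \prod_{j=1}^J \mathbb{E}\bigl[e^{\ii (\mathbf{u}\T \mathbf{a}_j) s_j}\bigr] = \prod_{j=1}^J \phi_{s_j}(\mathbf{u}\T \mathbf{a}_j).$$

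Third, I would take logarithms. Since each $\phi_{s_j}$ and $\phi_\mathbf{x}$ is continuous and equals $1$ at the origin, there is a neighbourhood $U$ of $\mathbf{0} \in \RR^I$ on which $\phi_\mathbf{x}(\mathbf{u})$ and each $\phi_{s_j}(\mathbf{u}\T \mathbf{a}_j)$ lie in the domain of the principal branch of $\log$. On $U$, applying $\log$ to both sides of the product identity converts the product into a sum and yields
$$\Psi_\mathbf{x}(\mathbf{u}) = \sum_{j=1}^J \Psi_{s_j}(\mathbf{u}\T \mathbf{a}_j).$$

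The only subtle point is the choice of branch of the logarithm: the identity becomes $\log \phi_\mathbf{x} = \sum_j \log \phi_{s_j}$ only up to an integer multiple of $2\pi \ii$, but by shrinking the neighbourhood so that all quantities $\phi_{s_j}(\mathbf{u}\T \mathbf{a}_j)$ and $\phi_\mathbf{x}(\mathbf{u})$ stay in, say, the open right half-plane, both sides depend continuously on $\mathbf{u}$ and agree at $\mathbf{u} = \mathbf{0}$ (where both are $0$), so they must coincide on that neighbourhood. This branch-consistency step is the only non-mechanical part; everything else is a direct consequence of linearity and independence.
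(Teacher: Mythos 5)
Your proof is correct. The paper does not prove this statement itself --- it cites it as \cite[Proposition 9.4]{comon2010handbook} --- and your argument is the standard derivation: linearity gives $\phi_{\mathbf{x}}(\mathbf{u})=\prod_j \phi_{s_j}(\mathbf{u}\T\mathbf{a}_j)$, independence factors the expectation, and the branch-of-logarithm issue is correctly dispatched by noting that the discrepancy is a continuous $2\pi\ii\mathbb{Z}$-valued function on a connected neighbourhood that vanishes at the origin.
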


\begin{theorem}[{See~\cite[Theorem 9.1]{comon2010handbook} and~\cite[Theorem 10.3.1]{kagan1973characterization}}]
\label{thm:general identifiability result}
Let $\mathbf{x}=A\mathbf{s}$, where $s_j$ are independent and non-degenerate and $A$ does not have any collinear columns. Then $\mathbf{x}=A_1 \mathbf{s_1}+ A_2 \mathbf{s_2}$, where $\mathbf{s_1}$ is a vector of non-Gaussian, $\mathbf{s_2}$ is a vector of Gaussian and independent of $\mathbf{s_1}$, and $A_1$ is unique, up to permuting and scaling of its columns.
\end{theorem}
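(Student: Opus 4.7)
The plan is to prove the proposition by contradiction. Assume the span hypothesis holds but $A$ is not identifiable, so that $A\mathbf{s}$ and $B\mathbf{r}$ have the same distribution for some $B\in\RR^{I\times K}$ with $K\le J$ and $\mathbf{r}$ having the same number of Gaussian entries as $\mathbf{s}$, while $B$ does not coincide with $A$ up to permutation and scaling. The goal is to produce a real rank-one matrix $\fb_J^{\otimes 2}\in\Span\{\fa_1^{\otimes 2},\ldots,\fa_J^{\otimes 2}\}$ with $\fb_J$ not collinear to any $\fa_j$, violating the hypothesis.

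The case of no Gaussian source follows from the Darmois--Skitovich-based result cited in the introduction, so I will assume $\mathbf{s}$ has exactly one Gaussian entry $s_J$ and likewise $\mathbf{r}$ has one Gaussian entry $r_K$. Applying Theorem~\ref{thm:general identifiability result} to both $A\mathbf{s}$ and $B\mathbf{r}$ separates each into a non-Gaussian and a Gaussian part, with the matrices of non-Gaussian columns determined up to permutation and scaling. Matching these forces the number of non-Gaussian columns to agree, hence $K=J$, and after relabeling and rescaling I may take $\fa_j=\fb_j$ for $j=1,\ldots,J-1$.

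Equating second-order cumulants of $A\mathbf{s}$ and $B\mathbf{r}$ yields
\[
\sum_{j=1}^J\sigma_j^2\,\fa_j^{\otimes 2}=\sum_{j=1}^J\tau_j^2\,\fb_j^{\otimes 2},
\]
where $\sigma_j^2,\tau_j^2$ are the variances of $s_j,r_j$. Substituting $\fa_j=\fb_j$ for $j<J$ and rearranging gives
\[
\tau_J^2\,\fb_J^{\otimes 2}=\sigma_J^2\,\fa_J^{\otimes 2}+\sum_{j=1}^{J-1}(\sigma_j^2-\tau_j^2)\,\fa_j^{\otimes 2}\in\Span\{\fa_1^{\otimes 2},\ldots,\fa_J^{\otimes 2}\}.
\]
By hypothesis, $\fb_J$ is collinear to some $\fa_i$. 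If $i=J$, then $A$ and $B$ coincide up to permutation and scaling, contradicting non-identifiability, so $i<J$.

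The remaining step, and the main obstacle, is ruling out $\fb_J\parallel\fa_i$ for some $i<J$. Say $\fb_J=c\,\fa_1$; merging the $\fa_1$-column and the $\fb_J$-column of $B\mathbf{r}$ rewrites it as $\fa_1(r_1+cr_J)+\sum_{j=2}^{J-1}\fa_j r_j$, a mixture by the matrix $B'=(\fa_1,\ldots,\fa_{J-1})$ with independent, non-Gaussian sources ($r_1+cr_J$ is non-Gaussian since $r_1$ is). Applying Theorem~\ref{thm:general identifiability result} once more, now to $A\mathbf{s}$ and this merged representation, makes the Gaussian part of $A\mathbf{s}$ -- concentrated along $\fa_J$ -- equal in distribution to the Gaussian part of $B'\mathbf{r}'$, which is supported along $\fa_1$ (coming from the Gaussian factor $cr_J$ of $r_1+cr_J$). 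Equating covariances forces $\fa_J$ collinear to $\fa_1$, contradicting that $A$ has no pair of collinear columns. The delicacy here will be correctly tracking the Gaussian factors that Theorem~\ref{thm:general identifiability result} might absorb from every non-Gaussian source via Kagan--Linnik--Rao splittings, so that the identification of the Gaussian parts across the two representations (and hence the final covariance contradiction) is justified.
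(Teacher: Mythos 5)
Your proposal does not prove the statement in question. The statement here is the classical decomposition theorem of Kagan--Linnik--Rao (and Comon--Jutten): any linear mixture of independent non-degenerate sources with no collinear mixing columns splits as $\mathbf{x}=A_1\mathbf{s_1}+A_2\mathbf{s_2}$ with $\mathbf{s_1}$ non-Gaussian, $\mathbf{s_2}$ Gaussian, and $A_1$ unique up to permutation and scaling. The paper does not prove this at all; it is imported as a black box from \cite[Theorem 9.1]{comon2010handbook} and \cite[Theorem 10.3.1]{kagan1973characterization}. What you have written is instead an argument for the \emph{sufficiency direction of Theorem~\ref{thm: iff for identifiablity}} (i.e.\ Proposition~\ref{prop: sufficient con for identifiability}): you assume the span condition and derive identifiability. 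Worse, your argument invokes Theorem~\ref{thm:general identifiability result} itself --- twice --- to match the non-Gaussian columns of $A$ and $B$ and to compare the Gaussian parts. As a proof of Theorem~\ref{thm:general identifiability result} this is circular; as a proof of Proposition~\ref{prop: sufficient con for identifiability} it is aimed at the wrong target. A genuine proof of the stated theorem would have to go through the Darmois--Skitovich machinery and the Cram\'er/L\'evy decomposition theory for characteristic functions, which is why the paper defers to the literature.

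Two secondary issues, in case you intended this as a proof of Proposition~\ref{prop: sufficient con for identifiability}. First, you equate second-order cumulants $\sum_j\sigma_j^2\fa_j^{\otimes 2}=\sum_j\tau_j^2\fb_j^{\otimes 2}$, which presupposes that the non-Gaussian sources have finite variances; the paper explicitly avoids this by working with second characteristic functions and invoking Theorem~\ref{thm:summandpolynomial} to extract the degree-two part (it remarks that second characteristic functions need not admit Taylor expansions). Second, your final step ruling out $\fb_J\parallel\fa_i$ for $i<J$ is more elaborate than necessary: in the paper's argument one simply observes that $\fb_J$ cannot be collinear to the other columns of $B$ (which equal $\fa_1,\ldots,\fa_{J-1}$ up to scale), since $B$ has no pair of collinear columns; no second application of the decomposition theorem or covariance comparison is needed there.
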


\begin{theorem}[{\cite[Lemma A.2.4]{kagan1973characterization}}]\label{thm:summandpolynomial}
Fix vectors $\mathbf{\alpha}_1,\ldots,\mathbf{\alpha}_n \in \RR^p$ and a vector of variables $\mathbf{u}:= (u_1, \ldots, u_p)\T$. 
Assume that $\alpha_i$ is not collinear to $\alpha_j$ for $i\neq j$ or to any elementary basis vector.
Let $\psi_1,\ldots,\psi_n,A_1,\ldots,A_p$ be complex-valued continuous functions. Assume that
$$
\psi_1(\alpha_1\T \mathbf{u})+\cdots+\psi_n(\alpha_n\T \mathbf{u})=\sum_{i=1}^p A_i(u_i)+P_k(\mathbf{u}),
$$
for all $\mathbf{u}$ in a neighborhood of the origin,
where $P_k$ is a degree $k$ polynomial in $\mathbf{u}$. Then the functions $\psi_j$ and $A_i$ are all polynomials of degree at most $\max \{ n,k\}$ in an interval around the origin.
\end{theorem}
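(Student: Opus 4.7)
My plan is to prove the theorem by induction on $n$, using finite difference operators rather than derivatives, since the hypothesis only guarantees continuity of the $\psi_j$ and $A_i$. For $\mathbf{h} \in \RR^p$, the operator $\Delta_{\mathbf{h}} f(\mathbf{u}) := f(\mathbf{u}+\mathbf{h}) - f(\mathbf{u})$ lowers the degree of a polynomial in $\mathbf{u}$ by at least one; it annihilates a univariate function $A_i(u_i)$ whenever the $i$-th coordinate of $\mathbf{h}$ vanishes; and it annihilates a composition $\psi_j(\alpha_j\T \mathbf{u})$ precisely when $\alpha_j\T\mathbf{h} = 0$. The strategy is to use these three facts to peel off one $\psi_j$ at a time while controlling the degree of the residual polynomial.

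\textbf{Base case} ($n=1$). Since $\alpha_1$ is not collinear to any basis vector it has two nonzero entries, at distinct indices $a$ and $b$. Apply the mixed second difference $\Delta_{t\mathbf{e}_a}\Delta_{t\mathbf{e}_b}$ for small $t$: the $A_i(u_i)$ terms are annihilated and $P_k$ drops to a polynomial of degree $\leq k-2$. The left-hand side becomes a four-term combination of translates of $\psi_1$, which depends on $\mathbf{u}$ only through $\alpha_1\T\mathbf{u}$; hence the right-hand side, as a polynomial in $\mathbf{u}$, must also depend only on that value, so it equals a one-variable polynomial of degree $\leq k-2$ in $\alpha_1\T\mathbf{u}$. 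A classical theorem of Fr\'echet, that a continuous function whose iterated difference is polynomial must itself be polynomial, then gives $\psi_1$ polynomial of degree $\leq k = \max\{1,k\}$.

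\textbf{Inductive step} ($n\geq 2$). Fix an index $j$. The non-collinearity assumption says that the hyperplane $\alpha_j^\perp$ is not contained in any $\alpha_i^\perp$ for $i\neq j$, so a generic $\mathbf{h}_j \in \alpha_j^\perp$ satisfies $\alpha_i\T \mathbf{h}_j \neq 0$ for all $i\neq j$. Applying $\Delta_{\mathbf{h}_j}$ kills the $\psi_j$ term and leaves an identity
\[
\sum_{i\neq j}\tilde\psi_i(\alpha_i\T\mathbf{u}) \;=\; \sum_i \tilde A_i(u_i) + \tilde P_{k-1}(\mathbf{u}),
\]
with $\tilde\psi_i(t) = \psi_i(t+\alpha_i\T \mathbf{h}_j) - \psi_i(t)$ continuous, $\tilde A_i$ continuous and univariate, and $\tilde P_{k-1}$ of degree $\leq k-1$. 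The $n-1$ vectors $\{\alpha_i : i\neq j\}$ inherit both non-collinearity hypotheses, so the inductive hypothesis yields each $\tilde\psi_i$ polynomial of degree $\leq \max\{n-1,k-1\}$. Since each shift $\alpha_i\T\mathbf{h}_j$ is nonzero, Fr\'echet's theorem forces $\psi_i$ itself to be polynomial of degree $\leq \max\{n,k\}$. Isolating $\psi_j(\alpha_j\T\mathbf{u}) = \sum_i A_i(u_i) + P_k(\mathbf{u}) - \sum_{i\neq j}\psi_i(\alpha_i\T \mathbf{u})$ leaves a single-term identity handled by the base case, so $\psi_j$ is polynomial as well. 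The polynomiality of each $A_i$ then follows by setting all but the $i$-th coordinate to zero in the identity $\sum_l A_l(u_l) = [\text{polynomial in }\mathbf{u}]$.

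\textbf{Main obstacle.} The continuity-only hypothesis rules out direct differentiation and forces us to work at the level of finite differences. The auxiliary step ``$\Delta_{\mathbf{h}_j}\psi_i$ polynomial of degree $d-1$ with nonzero shift implies $\psi_i$ polynomial of degree $d$'' is the sole nontrivial analytic input, relying on Fr\'echet's theorem; once it is in place, tracking the degree bound $\max\{n,k\}$ through the induction is routine bookkeeping.
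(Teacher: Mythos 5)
First, note that the paper does not prove this statement: it is imported verbatim as \cite[Lemma A.2.4]{kagan1973characterization}, so there is no in-paper proof to compare against. Your finite-difference skeleton is indeed the classical route to results of this type, and most of the bookkeeping (which terms each $\Delta_{\mathbf{h}}$ kills, the inheritance of the non-collinearity hypotheses, the degree count $\max\{n-1,k-1\}+1=\max\{n,k\}$, the recovery of the $A_i$ at the end) is correct.

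However, there is a genuine gap at the single analytic step you yourself flag as the crux. In the inductive step you choose \emph{one} generic $\mathbf{h}_j\in\alpha_j^\perp$, conclude that $\tilde\psi_i(t)=\psi_i(t+\alpha_i\T\mathbf{h}_j)-\psi_i(t)$ is a polynomial, and then assert that ``since the shift is nonzero, Fr\'echet's theorem forces $\psi_i$ to be polynomial.'' That implication is false for a single shift: if $c=\alpha_i\T\mathbf{h}_j\neq 0$ and $g$ is any continuous $c$-periodic function, then $\Delta_c(\psi_i+g)=\Delta_c\psi_i$, so a continuous function whose difference at one fixed step is polynomial need not be polynomial. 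Fr\'echet-type theorems require the (iterated) differences to vanish, or be polynomial, for \emph{all} steps, or at least for all steps in a neighbourhood of $0$. The same defect appears in the base case, where the two steps $t\alpha_{1a},t\alpha_{1b}$ are confined to a single ray. The repair is available inside your own construction but must be made explicit: run the argument for every $\mathbf{h}_j$ in an open subset of $\alpha_j^\perp\cap B_\varepsilon(0)$; because $\alpha_i$ is not collinear to $\alpha_j$, the functional $\mathbf{h}\mapsto\alpha_i\T\mathbf{h}$ is nonzero on $\alpha_j^\perp$, so the induced shifts sweep an interval around $0$. Then $\Delta_c\psi_i$ is a polynomial of degree $\le\max\{n-1,k-1\}$ for all $c$ in that interval, hence $\Delta_{c_1}\cdots\Delta_{c_{d+1}}\psi_i=0$ for all small steps with $d=\max\{n-1,k-1\}$, and the continuous-function version of Fr\'echet's theorem legitimately yields that $\psi_i$ is a polynomial of degree $\le d+1$. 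With that quantifier corrected (and the analogous correction in the base case), your proof goes through.
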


\begin{proof}[Proof of Proposition \ref{prop: sufficient con for identifiability}]
The model $\mathbf{x}=A\mathbf{s}$ is identifiable when $\mathbf{s}$ does not contain a Gaussian, by~\cite[Theorem 3]{1306473}, since $A$ has no pair of columns collinear. 
It remains to consider the case that $\mathbf{s}$ contains one Gaussian. Without loss of generality, suppose that $s_J$ is standard Gaussian. 
Take $B \in \RR^{I \times K}$ with $K \leq J$ and $\mathbf{r} = (r_1, \ldots, r_K)$ with $B \mathbf{r} = \mathbf{x}$. The columns of $B$ corresponding to non-Gaussian sources must each be collinear to one of $\fa_1,\ldots,\fa_{J-1}$, 
by Theorem \ref{thm:general identifiability result}. 
The vectors $\mathbf{r}$ and $\mathbf{s}$ have the same number of Gaussian entries.
The number of non-Gaussian and Gaussian sources in $\mathbf{r}$ must then be $J-1$ and $1$ respectively, by Theorem \ref{thm:general identifiability result} and the assumption $K\leq J$.
Without loss of generality, assume that $r_J$ is a standard Gaussian. Then the first $J-1$ columns of $B$ equal the first $J-1$ columns of $A$, up to scaling and permutation, by Theorem \ref{thm:general identifiability result}. Denote the last column of $B$ by $\mathbf{b}$. 

The second characteristic function of a standard Gaussian $x$ is $\Psi_x(t)=-\frac{1}{2}t^2$.
We have the equality 
\begin{equation}\label{eq:secondchar}
\sum_{j=1}^{J-1} (\Psi_{s_j}-\Psi_{r_j})(\mathbf{u}\T \mathbf{a}_j)- \frac{1}{2}(\mathbf{u}\T \mathbf{a}_J)^2+ \frac{1}{2}(\mathbf{u}\T \mathbf{b})^2=0,
\end{equation}
by Theorem \ref{thm:secondchar}. 
We apply Theorem \ref{thm:summandpolynomial} to the functions $\psi_j=\Psi_{s_j}-\Psi_{r_j},A_i=0,P_k=\frac{1}{2}(\mathbf{u}\T \mathbf{a}_J)^2-\frac{1}{2}(\mathbf{u}\T \mathbf{b})^2$. It shows that $\Psi_{s_j}-\Psi_{r_j}$ is polynomial in a neighborhood of the origin, for all $1\leq j \leq J-1$.
Taking the degree two part of \eqref{eq:secondchar}, we obtain an identity 
$\sum_{j=1}^{J-1}\lambda_j(\mathbf{u}\T \mathbf{a}_j)^2+(\mathbf{u}\T \mathbf{a}_J)^2-(\mathbf{u}\T \mathbf{b})^2=0$,
for some scalars $\lambda_1, \ldots, \lambda_{J-1}$.
That is,
$$
\mathbf{b}^{\otimes 2}= \fa_J^{\otimes 2} + \sum_{j=1}^{J-1}\lambda_j \fa_j^{\otimes 2}.$$
By the condition in the statement, we conclude that $\mathbf{b}$ is collinear to $\fa_j$ for some $j=1,\ldots,J$. However, $\mathbf{b}$ is not collinear to the first $J-1$ columns of $B$, since $B$ has no pair of columns collinear. Hence, it is not collinear to the first $J-1$ columns of $A$, and therefore $\mathbf{b}$ is collinear to $\fa_J$. Hence $A$ and $B$ are equal, up to permutation and scaling of columns.
\end{proof}

Taking the degree two part of~\eqref{eq:secondchar} requires 
Theorem~\ref{thm:summandpolynomial}: in general, second characteristic functions may not have Taylor expansions.
We now show how Proposition~\ref{prop: sufficient con for identifiability} suggests the viability of Algorithm~\ref{alg:recover A}.

\begin{theorem}
Fix $J\leq {I\choose 2}$. 
Suppose we have a generic $A\in \RR^{I\times J}$ satisfying the condition in Theorem \ref{thm: iff for identifiablity} and a system of independent sources $\mathbf{s}$ with one Gaussian and the rest non-Gaussian with non-vanishing second and fourth cumulants. Then $A$ can be recovered, up to permutation and scaling of its columns, from the second and fourth cumulants of $A\mathbf{s}$.
\end{theorem}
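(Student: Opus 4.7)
The plan is to verify that both steps of Algorithm~\ref{alg:recover A} succeed for generic $A$ satisfying the hypotheses. Since the Gaussian source $s_J$ contributes nothing to cumulants of order greater than two, the cumulant tensors decompose as $\kappa_4 = \sum_{j=1}^{J-1} \lambda_j \fa_j^{\otimes 4}$ and $\kappa_2 = \sum_{j=1}^{J} \mu_j \fa_j^{\otimes 2}$, where $\lambda_1, \ldots, \lambda_{J-1}$ and $\mu_1, \ldots, \mu_J$ are all nonzero by hypothesis. The two steps recover the first $J-1$ columns from $\kappa_4$ and the last column from $\kappa_2$.

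For Step 1, one needs to show that the symmetric rank-$(J-1)$ decomposition of $\kappa_4$ is unique up to permutation and scaling. Since $J - 1 \leq \binom{I}{2} - 1$ lies strictly below the generic rank of order-4 symmetric tensors in $I$ variables, generic identifiability results for symmetric tensor decomposition (e.g.\ Chiantini-Ottaviani-Vannieuwenhoven) give this uniqueness for generic $A$, once one checks that the rank range avoids the finite list of known defective cases for quartic symmetric decompositions. This recovers $\fa_1, \ldots, \fa_{J-1}$ up to permutation and scaling.

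For Step 2, since $\mu_J \neq 0$ we have $\Span\{\fa_1^{\otimes 2}, \ldots, \fa_{J-1}^{\otimes 2}, \kappa_2\} = \Span\{\fa_1^{\otimes 2}, \ldots, \fa_J^{\otimes 2}\}$. By the hypothesis that $A$ satisfies the condition in Theorem~\ref{thm: iff for identifiablity}, every rank one symmetric matrix in this span is a scalar multiple of some $\fa_j^{\otimes 2}$ with $j \in \{1, \ldots, J\}$ (an arbitrary rank one symmetric matrix is $\pm \mathbf{b}^{\otimes 2}$, and the negation closure of the span reduces the general case to $\mathbf{b}^{\otimes 2} \in \Span$). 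For generic $A$, the matrices $\fa_1^{\otimes 2}, \ldots, \fa_J^{\otimes 2}$ are linearly independent (since $J \leq \binom{I}{2} < \binom{I+1}{2}$, the dimension of the space of symmetric $I \times I$ matrices), so after discarding those collinear to $\fa_1^{\otimes 2}, \ldots, \fa_{J-1}^{\otimes 2}$ only the nonzero scalar multiples of $\fa_J^{\otimes 2}$ remain. This determines $\fa_J$ up to scaling, with any sign indeterminacy absorbed.

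The main obstacle is Step 1: invoking the correct generic identifiability theorem for symmetric order-4 tensor decomposition across the full rank range $J - 1 \leq \binom{I}{2} - 1$ and verifying that the list of defective subgeneric ranks does not intersect this range. Step 2 is then an immediate consequence of Theorem~\ref{thm: iff for identifiablity} combined with the generic linear independence of the rank one matrices $\fa_j^{\otimes 2}$.
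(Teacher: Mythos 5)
Your proposal is correct and follows essentially the same route as the paper: both recover $\fa_1,\ldots,\fa_{J-1}$ from the unique symmetric decomposition of $\kappa_4$ via the Chiantini--Ottaviani--Vannieuwenhoven generic identifiability theorem (the paper additionally cites Alexander--Hirschowitz to bound the generic rank, and the only quartic exceptional case $(I,r)=(4,8)$ lies outside the range $r \leq \binom{I}{2}$, as you anticipate), and both then identify $\fa_J$ as the unique new rank one matrix in $\Span\{\fa_1^{\otimes 2},\ldots,\fa_{J-1}^{\otimes 2},\kappa_2\} = \Span\{\fa_1^{\otimes 2},\ldots,\fa_J^{\otimes 2}\}$ using the hypothesis from Theorem~\ref{thm: iff for identifiablity}. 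No gaps.
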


\begin{proof}
Without loss of generality, suppose that $s_J$ is the Gaussian source. Let the fourth cumulant of $s_j$ be $\lambda_j \in \RR \backslash \{ 0 \}$. Then the fourth cumulant tensor of $\mathbf{x} = A \mathbf{s}$ is $\kappa_4=\sum_{j=1}^{J-1}\lambda_j \fa_j^{\otimes 4}$.
The rank is $J-1$, 
by the genericity of the vectors $\fa_j$.
A generic symmetric tensor of format $I\times I\times I \times I $ has symmetric rank $\lceil \frac{1}{24} (I+3)(I+2)(I+1) \rceil$, by the Alexander-Hirschowitz Theorem~\cite{alexander1995poly}.
The rank of $\kappa_4$ is less than the generic rank, since $J - 1 \leq {{ I \choose 2}}$ and the inequality ${I\choose 2} < \lceil \frac{1}{24} (I+3)(I+2)(I+1) \rceil$ holds for all $I$. 
Hence $\kappa_4$ has a unique symmetric decomposition, by~\cite[Theorem 1.1]{chiantini2017generic}. Therefore, the first $J-1$ columns of $A$ can be recovered, up to scaling and permutation, via the symmetric tensor decomposition of $\kappa_4$. 

The second cumulant of $\fx$ is $\kappa_2=\sum_{j=1}^J \mu_k \fa_j\otimes \fa_j$, where $\mu_j$ is the variance of $s_j$. Hence $\Span\{\fa_1^{\otimes 2},\ldots, \fa_{J-1}^{\otimes 2},\kappa_2\}=\Span\{\fa_1^{\otimes 2},\ldots, \fa_{J}^{\otimes 2}\}$. 
By assumption, the only real rank one matrices in $\Span\{\fa_1^{\otimes 2},\ldots, \fa_{J}^{\otimes 2}\}$ are $\fa_1^{\otimes 2},\ldots, \fa_{J}^{\otimes 2}$ up to scalar, so a rank one matrix $\fa_J^{\otimes 2}$ in $\Span\{\fa_1^{\otimes 2},\ldots, \fa_{J-1}^{\otimes 2},\kappa_2\}$ that is not collinear to $\fa_1^{\otimes 2},\ldots, \fa_{J-1}^{\otimes 2}$ must be a scalar multiple of $\fa_J^{\otimes 2}$. Hence, $A$ is recovered uniquely up to column permutation and scaling from the second and fourth cumulant tensors of $A\mathbf{s}$.
\end{proof} 

\subsection{Necessity}
We complete the proof of Theorem \ref{thm: iff for identifiablity}, showing that our condition is necessary.

\begin{proposition}\label{prop: necessary con for identifiability}
Fix matrix $A\in \RR^{I\times J}$ with columns $\fa_1, \ldots, \fa_J$. Then $A$ is identifiable only if
no pair of its columns is collinear and 
the linear span of $\fa_1^{\otimes 2},\ldots,\fa_J^{ \otimes 2}$ does not contain any real matrix $\mathbf{b}^{\otimes 2}$ unless $\mathbf{b}$ is collinear to $\fa_j$ for some $j \in \{ 1, \ldots, J\}$.
\end{proposition}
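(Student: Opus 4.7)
The plan is to prove the contrapositive: suppose either (a) two columns of $A$ are collinear, or (b) there is a vector $\mathbf{b}\in\RR^I$, not collinear to any $\fa_j$, with $\mathbf{b}^{\otimes 2}\in\Span\{\fa_1^{\otimes 2},\ldots,\fa_J^{\otimes 2}\}$. In each case I will construct independent non-degenerate sources $\mathbf{s}$ and $\mathbf{r}$, carrying the same number of Gaussian entries (at most one), together with a matrix $B\in\RR^{I\times K}$ such that $A\mathbf{s}$ and $B\mathbf{r}$ agree in distribution while $B$ is not a column permutation and rescaling of $A$. This contradicts Definition~\ref{def:id}.

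Case (a) is handled by merging the collinear pair. If $\fa_{j_2}=c\fa_{j_1}$, let all $s_j$ be independent non-degenerate non-Gaussian random variables (e.g.\ exponentials), let $B$ be $A$ with column $j_2$ deleted, and put $r_{j_1}=s_{j_1}+cs_{j_2}$ and $r_k=s_k$ for the remaining indices. Then $\mathbf{r}$ has independent non-Gaussian entries, $B\mathbf{r}=A\mathbf{s}$, and $B$ has $J-1\neq J$ columns.

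For case (b), write $\mathbf{b}^{\otimes 2}=\sum_{j=1}^J\lambda_j\fa_j^{\otimes 2}$; taking the trace gives $\sum_j\lambda_j\|\fa_j\|^2=\|\mathbf{b}\|^2>0$, so after relabeling we may assume $\lambda_J>0$. Set $B=(\fa_1,\ldots,\fa_{J-1},\mathbf{b})$ (no two columns collinear, by hypothesis), fix any $\tau>0$, and let $\sigma^2=\tau^2\lambda_J$. For $j<J$ put $\gamma_j^+=\max(\tau^2\lambda_j,0)$ and $\gamma_j^-=\max(-\tau^2\lambda_j,0)$, so that $\gamma_j^+-\gamma_j^-=\tau^2\lambda_j$. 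Choose independent non-degenerate non-Gaussian random variables $n_1,\ldots,n_{J-1}$ together with mutually independent centered Gaussians $G_j^\pm,H,H'$ of variances $\gamma_j^\pm,\sigma^2,\tau^2$, all independent of the $n_j$, and define
\[
\mathbf{s}=(n_1+G_1^+,\ldots,n_{J-1}+G_{J-1}^+,H),\qquad \mathbf{r}=(n_1+G_1^-,\ldots,n_{J-1}+G_{J-1}^-,H').
\]
By Cram\'er's decomposition theorem each non-final coordinate is non-Gaussian, so both $\mathbf{s}$ and $\mathbf{r}$ are vectors of independent non-degenerate sources with exactly one Gaussian entry. The random vectors $A\mathbf{s}$ and $B\mathbf{r}$ each split as the common non-Gaussian term $\sum_{j<J}\fa_j n_j$ plus an independent centered Gaussian, and equality in distribution reduces to matching the Gaussian covariances, i.e.\ to the identity $\sum_{j<J}\gamma_j^+\fa_j^{\otimes 2}+\sigma^2\fa_J^{\otimes 2}=\sum_{j<J}\gamma_j^-\fa_j^{\otimes 2}+\tau^2\mathbf{b}^{\otimes 2}$, which follows by expanding $\mathbf{b}^{\otimes 2}$ using the linear relation and invoking $\gamma_j^+-\gamma_j^-=\tau^2\lambda_j$ and $\sigma^2=\tau^2\lambda_J$. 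Since $\mathbf{b}$ is collinear to no $\fa_j$, $B$ cannot be a permutation and rescaling of $A$, so $A$ is not identifiable.

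The main technical issue is case (b): the linear relation among the rank one matrices has to be converted into a pair of genuinely different probabilistic decompositions of the same law. Negative coefficients $\lambda_j$ forbid a naïve choice of Gaussian noise on only one side, but splitting $\tau^2\lambda_j$ into positive and negative parts distributes the Gaussian perturbation to whichever of $\mathbf{s}$ or $\mathbf{r}$ is appropriate, and the trace argument guarantees that we can place the Gaussian source of $\mathbf{s}$ at a column with positive coefficient so that $\sigma^2>0$.
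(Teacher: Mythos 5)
Your proposal is correct and takes essentially the same route as the paper's proof: you argue the contrapositive by merging a collinear pair of columns, and otherwise replace the column with positive coefficient by $\mathbf{b}$ and add Gaussian noise to the non-Gaussian sources on the $\mathbf{s}$ or $\mathbf{r}$ side according to the sign of $\lambda_j$, exactly as in Proposition~\ref{prop: necessary con for identifiability}. The only differences are cosmetic: you check equality in distribution by matching the covariances of the independent Gaussian parts rather than via second characteristic functions, and you make explicit the trace argument guaranteeing a positive coefficient and the appeal to Cram\'er's theorem for non-Gaussianity of $n_j+G_j^{\pm}$, points the paper leaves implicit.
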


\begin{proof}
If the mixing matrix $A$ has two collinear columns, we can combine them and obtain a matrix $B\in \RR^{I\times (J-1)}$ and a system of independent sources $\mathbf{r} = (r_1, \ldots, r_{J-1})$ such that $B\mathbf{r}$ and $A\mathbf{x}$ have the same distribution. Hence identifiability implies no pair of collinear columns. 

Assume there exists $\fb^{\otimes 2} \in \Span \{ \fa_1^{\otimes 2}, \ldots, \fa_J^{\otimes 2} \}$ with $\fb$ not collinear to any $\fa_j$. Then 
$$
\fb^{\otimes 2} = \sum_{j=1}^{J-1} \lambda_j\fa_j^{\otimes 2}+\fa_J^{\otimes 2},
$$
since we can assume without loss of generality that the coefficient of $\fa_J$ is one. 
Let $B$ be the matrix with columns $\fa_1,\ldots,\fa_{J-1},\fb$.
We will construct independent random variables $s_j$ and $r_j$ such that $A\mathbf{s}$ and $B\mathbf{r}$ have the same distribution. 

Let $r_J$ and $s_J$ be standard Gaussians. Choose $J-1$ non-Gaussian random variables $y_1,\ldots,y_{J-1}$ and Gaussian distributions $z_1,\ldots,z_{J-1}$ with second characteristic functions 
$$ \Psi_{z_j}(t) = \begin{cases} -\frac{1}{2}|\lambda_j|t^2 & \lambda_j\neq 0 \\ 
0 & \lambda_j=0, \end{cases} $$
such that these random variables together with the standard Gaussians $r_J,s_J$ are independent.
When $\lambda_j\geq 0$, set $r_j=y_j$ and $s_j=y_j+z_j$; when $\lambda_j<0$, set $s_j=y_j$ and $r_j=y_j+z_j$. Then $s_1,\ldots,s_J$ are independent and $r_1,\ldots,r_J$ are independent. 
The source variables differ by Gaussians. We have
$$ ( \Psi_{s_j} - \Psi_{r_j})(t) = \begin{cases} ( \Psi_{y_j + z_j} - \Psi_{y_j}) (t) & \lambda_j > 0 \\ 
( \Psi_{y_j} - \Psi_{y_j  + z_j}) (t) & \lambda_j < 0 \\ 
0 & \lambda_j = 0. 
\end{cases}$$
All three cases evaluate to give $( \Psi_{s_j} - \Psi_{r_j})(t) = -\frac12 \lambda_j t^2$.
The second characteristic functions of $A\mathbf{s},B\mathbf{r}$ are equal, 
by Theorem \ref{thm:secondchar}, since 
\begin{align}
\Psi_{A\mathbf{s}}(\mathbf{u})-\Psi_{B\mathbf{r}}(\mathbf{u})=\sum_{j=1}^{J-1} (\Psi_{s_j}-\Psi_{r_j})(\mathbf{u}\T \fa_j)-\frac{1}{2}(\mathbf{u}\T \fa_J)^2+\frac{1}{2}(\mathbf{u}\T \mathbf{b})^2=0.
\end{align}
 Hence $A\mathbf{s}$ and $B\mathbf{r}$ have the same distribution. The last column of $B$ is not collinear to any column of $A$, so $A$ is not identifiable.
\end{proof}

\noindent Propositions \ref{prop: sufficient con for identifiability} and~\ref{prop: necessary con for identifiability} combine to prove Theorem~\ref{thm: iff for identifiablity}.
We give an example of a identifiable matrix $A$. We build such examples in Section \ref{section: special matrices}.

\begin{example}
Let 
$$
A=\begin{pmatrix}
0&3&1&\frac{-27417}{160871}&1&0&0&0\\
1&9&11&\frac{282663}{36181}&0&1&0&0\\
2&14&13&17&0&0&1&0\\
3&1&\frac{-89735}{6339}&19&0&0&0&1
\end{pmatrix}.
$$
If $\fb^{\otimes 2} 
\in \Span \{ \fa_1^{\otimes 2}, \ldots, \fa_8^{\otimes 2} \}$, then $\fb$ collinear to $\fa_j$ for some $j \in \{ 1, \ldots, 8 \}$, 
as can be checked in Macaulay2.
So $A$ is identifiable, by Theorem~\ref{thm: iff for identifiablity}.
\end{example}

\section{From identifiability to systems of quadrics}\label{sec:iden to quadrics}

The characterization of identifiability in Theorem~\ref{thm: iff for identifiablity} is closely related to the study of systems of quadrics, as we now describe. 
 We will use systems of quadrics to prove Theorem~\ref{thm: real generic matrix result}. The proof has two steps. The first step is to study the complex analogue of identifiability.
The second step is to convert the complex results into real insights for the real setting of ICA. We give the complex analogue of Definition~\ref{def:id}.

\begin{definition}
\label{def:id_complex_analogue}
A mixing matrix $A \in \CC^{I\times J}$ is \emph{complex identifiable} if for any non-degenerate sources $\mathbf{s}=(s_1,
\ldots,s_J)\T$ with at most one $s_j$ Gaussian, matrix $A$ can be recovered uniquely, up to permutation and scaling of its columns, from $A \mathbf{s}$. That is, if $A \mathbf{s}$ and $ B \mathbf{r}$ have the same distribution for some $B \in \CC^{I \times K}$ with $K \leq J$ and some $\mathbf{r} = (r_1, \ldots, r_K)$, with 
the same number of Gaussian entries as $\mathbf{s}$, then $J = K$ and matrices $A$ and $B$ coincide, up to permutation and scaling of columns. 
\end{definition}

Complex ICA appears in applications to telecommunications~\cite{uddin2015applications} and in ICA algorithms such as~\cite{de2007fourth} and \cite{albera2004blind}.
We prove the complex analogue of Theorem~\ref{thm: iff for identifiablity}.

\begin{proposition}\label{prop: complex identifiable}
A matrix $A\in \CC^{I\times J}$ is \emph{complex identifiable} if and only if no pair of its columns are collinear and the linear span of the matrices $\fa_1^{\otimes 2} , \ldots, ,\fa_J^{\otimes 2}$ does not contain any rank one matrix that is not collinear to $\fa_1^{\otimes 2},\ldots,\fa_J^{\otimes 2}$. 
\end{proposition}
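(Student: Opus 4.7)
The plan is to adapt the proof of Theorem~\ref{thm: iff for identifiablity}---split into Propositions~\ref{prop: sufficient con for identifiability} and~\ref{prop: necessary con for identifiability}---to the complex setting. The three ingredients used there, Theorems~\ref{thm:secondchar},~\ref{thm:general identifiability result}, and~\ref{thm:summandpolynomial}, pass to $\CC$-valued random variables with only analytic-continuation adjustments, so the main task is to track where the real/complex distinction actually matters for our statement.

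\textbf{Sufficiency.} I would assume $A$ satisfies the span condition and let $B \in \CC^{I\times K}$ and $\mathbf{r}$ produce the same distribution as $A\mathbf{s}$, with the same number of Gaussian entries. By the complex version of Theorem~\ref{thm:general identifiability result}, each column of $B$ attached to a non-Gaussian source is collinear to a non-Gaussian column of $A$; counting forces $K=J$, and after relabeling the first $J-1$ columns of $B$ agree with those of $A$ up to scaling. Writing $\fb$ for the last column of $B$, applying Theorem~\ref{thm:secondchar} to $\Psi_{A\mathbf{s}} = \Psi_{B\mathbf{r}}$ yields the analogue of~\eqref{eq:secondchar}, and Theorem~\ref{thm:summandpolynomial} then makes each $\Psi_{s_j} - \Psi_{r_j}$ polynomial near the origin. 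Extracting the degree-two part produces $\fb^{\otimes 2} = \fa_J^{\otimes 2} + \sum_{j=1}^{J-1}\lambda_j \fa_j^{\otimes 2}$ with $\lambda_j \in \CC$. By hypothesis $\fb$ is collinear to some $\fa_j$, and since $B$ has no collinear columns, $\fb$ must be collinear to $\fa_J$.

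\textbf{Necessity.} As in the real case, if $A$ has collinear columns they can be merged, breaking identifiability. Otherwise suppose a rank one $\fb^{\otimes 2}$ lies in $\Span\{\fa_1^{\otimes 2}, \ldots, \fa_J^{\otimes 2}\}$ with $\fb$ not collinear to any $\fa_j$, and write $\fb^{\otimes 2} = \sum_{j=1}^{J-1}\lambda_j \fa_j^{\otimes 2} + \fa_J^{\otimes 2}$ with $\lambda_j \in \CC$. Over $\CC$ the sign-split of the real proof becomes unnecessary: fix any complex square root $\mu_j$ of $\lambda_j$ and set $z_j := \mu_j \zeta_j$ for a standard real Gaussian $\zeta_j$, so $\Psi_{z_j}(t) = -\tfrac12 \lambda_j t^2$. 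With independent non-Gaussian $y_1,\ldots,y_{J-1}$ and standard Gaussians $s_J, r_J$, the choice $s_j := y_j + z_j$, $r_j := y_j$ yields source vectors each with exactly one Gaussian entry and $(\Psi_{s_j} - \Psi_{r_j})(t) = -\tfrac12 \lambda_j t^2$. The same computation as in Proposition~\ref{prop: necessary con for identifiability} gives $\Psi_{A\mathbf{s}} = \Psi_{B\mathbf{r}}$ for $B$ with columns $\fa_1,\ldots,\fa_{J-1},\fb$, so $A$ is not complex identifiable.

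\textbf{Main obstacle.} The only genuine technical point is verifying that Theorems~\ref{thm:general identifiability result} and~\ref{thm:summandpolynomial}, both originally stated for real variables, remain valid for $\CC$-valued random variables with characteristic functions analytic in a neighborhood of the origin. Once this analytic extension is in place, the algebraic heart of the argument is identical to, and in fact simpler than, the real case: over $\CC$ we drop both the sign-split in the necessity construction and the realness requirement on $\fb^{\otimes 2}$ in the sufficiency condition.
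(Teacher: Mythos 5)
Your proposal is correct and takes essentially the same route as the paper: the sufficiency direction is the argument of Proposition~\ref{prop: sufficient con for identifiability} carried over unchanged, and the necessity direction uses exactly the paper's construction $s_j = y_j + \sqrt{\lambda_j}\, z_j$ with a complex square root in place of the real case's sign-split. The analytic extension of Theorems~\ref{thm:general identifiability result} and~\ref{thm:summandpolynomial} to the complex setting that you flag as the main obstacle is likewise left implicit in the paper's own proof.
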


\begin{proof}
    The sufficient direction is the same as the proof of Proposition~\ref{prop: sufficient con for identifiability}. 
    For the necessary direction, the proof is simpler than Proposition~\ref{prop: necessary con for identifiability}, since we allow complex square roots. 
    The matrix $A$ cannot have collinear columns, 
    as in Proposition~\ref{prop: necessary con for identifiability}. 
    Given $\fb^{\otimes 2} \in \Span \{ \fa_j^{\otimes 2} : j = 1, \ldots, J \}$ such that $\fb$ is not collinear to any $\fa_j$, 
    we write 
    $$
    \fb^{\otimes 2} = \sum_{j=1}^{J-1} \lambda_j\fa_j^{\otimes 2}+\fa_J^{\otimes 2},$$
since we can assume without loss of generality that the coefficient of $\fa_J$ is one.
    Define $A$ and $B$ as in the proof of Proposition~\ref{prop: necessary con for identifiability}.
    Let $r_J$ and $s_J$ be standard Gaussians. Choose $J-1$ non-Gaussian random variables $y_1,\ldots,y_{J-1}$ and standard Gaussian distributions $z_1,\ldots,z_{J-1}$.
    Define $r_j=y_j$ and $s_j=y_j+\sqrt{\lambda_j}z_j$. Then $\Psi_{s_j}(t) = \Psi_{y_j}(t) + \lambda_j \Psi_{z_j}(t)$.
    The source variables differ by a complex scalar multiple of a Gaussian. 
    The second characteristic functions for $A\mathbf{s}$ and $B\mathbf{r}$ are equal, by Theorem \ref{thm:secondchar}, since 
    \begin{align}
    \Psi_{A\mathbf{s}}(\mathbf{u})-\Psi_{B\mathbf{r}}(\mathbf{u})=\sum_{j=1}^{J-1} (\Psi_{s_j}-\Psi_{r_j})(\sum_{i=1}^I u_i a_{ij})-\frac{1}{2}(\sum_{i=1}^I u_i a_{iJ})^2+\frac{1}{2}(\sum_{i=1}^I u_ib_i)^2=0.
    \end{align}
    Hence $A\mathbf{s}$ and $B\mathbf{r}$ have the same distribution. The last column of $B$ is not collinear to any column of $A$, so $A$ is not complex identifiable.
\end{proof}

We will prove the following characterization of complex identifiability.

\begin{theorem}\label{thm: complex identifiable}
Let $A \in \CC^{I \times J}$ be generic. Then
\begin{enumerate}
    \item If $J \leq {I \choose 2}$ 
    or if $(I, J) = (2, 2)$ or $(3, 4)$, then $A$ is complex identifiable;
    \item If $J \geq {I \choose 2}+2$ or if $J \geq {I \choose 2} + 1$ for $I\geq 4$, then $A$ is complex non-identifiable.
\end{enumerate}
\end{theorem}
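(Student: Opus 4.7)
The plan is to translate Proposition \ref{prop: complex identifiable} into a question about the second Veronese variety $V := \nu_2(\PP^{I-1}) \subset \PP(\text{Sym}^2 \CC^I) = \PP^{\binom{I+1}{2}-1}$. Complex identifiability of a generic $A$ is equivalent to the statement that the projective linear span $L := \Span\{\fa_1^{\otimes 2},\ldots,\fa_J^{\otimes 2}\}$, which is a $\PP^{J-1}$ for generic $A$, meets $V$ only in the prescribed points $\fa_j^{\otimes 2}$. Recall the classical facts that $V$ has dimension $I-1$ and degree $2^{I-1}$, so the expected dimension of $L \cap V$ is
$$
(J-1) + (I-1) - \bigl(\tbinom{I+1}{2}-1\bigr) = J - \tbinom{I}{2} - 1.
$$
The three regimes in the theorem correspond to this quantity being negative, zero, or positive.

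The critical case is $J = \binom{I}{2}+1$, where the expected dimension is zero. For generic $A$, a transversality argument shows that $L \cap V$ is reduced and $0$-dimensional, so by Bezout $|L \cap V| = \deg V = 2^{I-1}$. Exactly $J$ of these points are the prescribed $\fa_j^{\otimes 2}$, so identifiability fails precisely when $2^{I-1} > \binom{I}{2}+1$. A short induction on $I$ shows that equality $2^{I-1} = \binom{I}{2}+1$ holds only for $I \in \{1,2,3\}$ and the inequality is strict for every $I \geq 4$. This explains both the exceptional identifiable pairs $(I,J)=(2,2),(3,4)$ and the non-identifiability at $J=\binom{I}{2}+1$ for $I \geq 4$.

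The remaining cases reduce to dimension counts. When $J \geq \binom{I}{2}+2$, upper semi-continuity of fibre dimension applied to the universal intersection over the Grassmannian of $(J-1)$-planes shows the specialized $L$ spanned by $J$ generic points of $V$ still meets $V$ in dimension $\geq J-1-\binom{I}{2} \geq 1$; hence $L \cap V$ contains a positive-dimensional family of extras and $A$ is not complex identifiable. When $J \leq \binom{I}{2}$, I would introduce the incidence
$$
Z = \bigl\{(\fa_1,\ldots,\fa_J,\fb) \in V^J \times V : \fb^{\otimes 2} \in \Span\{\fa_j^{\otimes 2}\},\ \fb \not\parallel \fa_j \text{ for all } j \bigr\},
$$
and show that the projection $Z \to V^J$ is not dominant. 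This reduces to checking that the locus in $V^{J+1}$ where $J+1$ distinct points of $V$ are linearly dependent in $\PP^{\binom{I+1}{2}-1}$ has dimension strictly less than $\dim V^J = J(I-1)$. The dependency condition is a rank drop whose codimension in the ambient matrix space is $\binom{I+1}{2}-J$, and the Veronese embedding is sufficiently non-degenerate that this codimension is preserved on $V^{J+1}$; the inequality $(J+1)(I-1) - (\binom{I+1}{2}-J) < J(I-1)$ then simplifies to precisely $J \leq \binom{I}{2}$.

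The hard part will be the two transversality claims: that for generic $A$ the intersection in the borderline case $J = \binom{I}{2}+1$ is reduced (so Bezout applies literally), and that the dependency locus on $V^{J+1}$ really has the expected codimension for $J \leq \binom{I}{2}$ despite the known mild defectivity of low secants of Veronese. Both should follow from exhibiting a single transverse example, computer-verifiable in Macaulay2 in the spirit of the construction used in Example \ref{ex:id}, together with upper semi-continuity; this verification is the core technical content that must be executed before the clean dimension arithmetic above delivers the theorem.
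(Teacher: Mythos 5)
Your overall architecture matches the paper's: reduce to the second Veronese $V = \Vcal_I$ via Proposition~\ref{prop: complex identifiable}, split into three regimes according to the expected dimension $J - \binom{I}{2} - 1$ of $L \cap V$, handle the borderline case $J=\binom{I}{2}+1$ by counting $2^{I-1}$ transverse intersection points against $J$, and handle $J\geq \binom{I}{2}+2$ by a lower bound on $\dim(L\cap V)$. Those two regimes are essentially sound and coincide with the paper's treatment (the paper uses Krull's principal ideal theorem and a Grassmannian density argument where you invoke fibre-dimension semicontinuity and an unspecified transversality argument; the content is the same, and the arithmetic $2^{I-1}>\binom{I}{2}+1$ for $I\geq 4$ with equality exactly at $I\leq 3$ is correct).

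The gap is in the main case $J \leq \binom{I}{2}$. The assertion that the dependency locus in $V^{J+1}$ has the expected codimension $\binom{I+1}{2}-J$ is precisely the hard content, and it does not follow from nondegeneracy of the Veronese: lower bounds on the codimension of a determinantal locus restricted to a subvariety are not automatic, and in fact the closure of your locus of dependent tuples contains diagonal-type components (e.g.\ $\fb^{\otimes 2}=\fa_1^{\otimes 2}$) of codimension only $I-1 < \binom{I+1}{2}-J$, so the dependency locus genuinely fails to have pure expected codimension and the non-diagonal components must be bounded by a separate argument. Your fallback --- one transverse Macaulay2 example plus semicontinuity --- does not close this: $L\cap V$ is an excess intersection, so its length is not conserved in families and exhibiting one fibre equal to exactly the $J$ points does not prevent extra points over a dense open set (the image of the constructible incidence set $Z$ need not be closed, so one empty fibre does not refute dominance of $Z\to V^J$); and a computer check can only treat finitely many $(I,J)$ while the claim is for all of them. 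What you are trying to establish here is exactly the Generalized Trisecant Lemma --- the span of $J$ general points of an irreducible nondegenerate $X$ with $(J-1)+\dim X$ less than the ambient dimension meets $X$ only in those points --- which the paper imports as a known, nontrivial theorem \cite[Proposition 2.6]{chiantini2002weakly}. You should either cite that result or supply its actual (inductive, projection-based) proof; the dimension arithmetic alone does not deliver it.
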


\noindent Theorem~\ref{thm: complex identifiable} immediately implies part 1 of Theorem~\ref{thm: real generic matrix result}.

\begin{corollary}\label{cor: first result from complex to real}
Let $A \in \RR^{I \times J}$ be generic. If $J\leq {I\choose 2}$ or if $(I,J) = (2,2)$ or $(3,4)$, then $A$ is identifiable.
\end{corollary}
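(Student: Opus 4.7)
The plan is to deduce the real corollary from its complex counterpart by observing that complex identifiability is strictly stronger than real identifiability, and by noting that the complex-identifiable locus is Zariski open in $\CC^{I\times J}$ and defined over $\mathbb{Q}$, so a generic real matrix lies in it.

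First, I would set up the complex-to-real transfer. Fix $A\in\RR^{I\times J}$, regard it also as an element of $\CC^{I\times J}$, and assume $A$ is complex identifiable. Genericity already guarantees that no pair of its columns is $\RR$-collinear. The real linear span of $\fa_1^{\otimes 2},\ldots,\fa_J^{\otimes 2}$ sits inside the complex linear span, so if $\fb\in\RR^I$ satisfies $\fb^{\otimes 2}\in\Span_\RR\{\fa_1^{\otimes 2},\ldots,\fa_J^{\otimes 2}\}$, then $\fb^{\otimes 2}$ also lies in $\Span_\CC\{\fa_1^{\otimes 2},\ldots,\fa_J^{\otimes 2}\}$. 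Proposition~\ref{prop: complex identifiable} then forces $\fb$ to be $\CC$-collinear to some $\fa_j$; since both vectors are nonzero and real, the proportionality constant must itself be real, whence $\fb$ is $\RR$-collinear to $\fa_j$. Theorem~\ref{thm: iff for identifiablity} now yields real identifiability.

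Second, I would verify that complex identifiability is a generic property already in $\RR^{I\times J}$. By Proposition~\ref{prop: complex identifiable}, the complex non-identifiable locus in $\CC^{I\times J}$ is the union of the pairwise column-collinearity locus and the locus where $\Span_\CC\{\fa_j^{\otimes 2}\}$ meets the Veronese variety of rank-one symmetric matrices outside the points $[\fa_j^{\otimes 2}]$. Both conditions are expressible by polynomial equations with integer coefficients in the entries of $A$ (in the second case, after introducing auxiliary rank-one parameters and taking the Zariski closure of the projection onto the $A$-coordinates). Under the hypothesized ranges of $(I,J)$, part~1 of Theorem~\ref{thm: complex identifiable} asserts that this locus is a proper Zariski-closed subvariety $V\subset\CC^{I\times J}$ defined over $\mathbb{Q}$. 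Hence $V$ is contained in the zero set of some nonzero polynomial in $\mathbb{Q}[a_{ij}]$, which has Lebesgue measure zero in $\RR^{I\times J}$. A generic $A\in\RR^{I\times J}$ therefore avoids $V$, is complex identifiable, and by the first step is identifiable.

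The only real obstacle is bookkeeping: ensuring that the complex non-identifiability locus truly descends to a $\mathbb{Q}$-subvariety, so that its real points form a measure-zero set rather than potentially all of $\RR^{I\times J}$. This is essentially automatic, since the conditions in Proposition~\ref{prop: complex identifiable} involve only polynomial operations on the entries of $A$ together with elimination of auxiliary complex parameters, both of which preserve the field of definition.
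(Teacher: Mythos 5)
Your proposal is correct and follows essentially the same route as the paper: apply Theorem~\ref{thm: complex identifiable} to conclude complex identifiability, then use Proposition~\ref{prop: complex identifiable} together with Theorem~\ref{thm: iff for identifiablity} to pass to real identifiability. The extra bookkeeping you do (descent of the genericity condition to $\RR^{I\times J}$ and the real-vs-complex collinearity point) merely makes explicit what the paper leaves implicit in its definition of genericity, so there is no substantive difference.
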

\begin{proof}
Such matrices $A$ are complex identifiable, by Theorem~\ref{thm: complex identifiable}. Hence no pair of its columns are collinear and the linear span of $\fa_1^{\otimes 2} , \ldots ,\fa_J^{\otimes 2}$ does not contain any rank one matrix not collinear to $\fa_1^{\otimes 2} , \ldots ,\fa_J^{\otimes 2}$, by Proposition~\ref{prop: complex identifiable}.
In particular, the linear span contains no real rank one matrix. Hence $A$ is identifiable, by Theorem \ref{thm: iff for identifiablity}.
\end{proof}

Theorem~\ref{thm: iff for identifiablity} and Proposition~\ref{prop: complex identifiable} translate to conditions on systems of quadrics, homogeneous degree two polynomials, as we now explain.
Theorem~\ref{thm: iff for identifiablity} involves the linear space $\Span \{ \mathbf{a}_j^{\otimes 2} : j = 1, \ldots, J \}$. 
We view a symmetric matrix $M$ either as an array of $I \times I$ entries $M_{ij}$, for $1 \leq i, j \leq I$, or as a vector of ${I + 1 \choose 2}$ entries $M_{ij}$, for $ 1\leq i \leq j \leq I$.
In our identifiability conditions, two vectors or matrices are equivalent if they agree up to scale, so it is convenient to work in projective space. 
We denote the projectivization of 
$\Span \{ \mathbf{a}_j^{\otimes 2} : j = 1, \ldots, J \}$ by $\Wcal(A)$. 
It is a linear space in $\PP_\CC^{m-1}$, where $m = {I+1\choose 2}$.
The coordinates on $\PP_\CC^{m-1}$ are $\mathbf{z} = ( z_{ij} : 1 \leq i \leq j \leq I)$. 
The space $\Wcal(A)$ is defined by linear relations 
\begin{equation}
    \label{eqn:linear}
    l_1(\mathbf{z}) =\sum \limits_{1\leq i\leq j \leq I}\lambda^{(1)}_{ij}z_{ij} \quad \ldots \quad l_k(\mathbf{z}) =\sum \limits_{1\leq i\leq j \leq I}\lambda^{(k)}_{ij}z_{ij}. 
\end{equation} 
The number of quadrics $k$ is the number of linearly independent conditions that cut out $\Wcal(A)$. In particular, if $\Wcal(A)$ spans the whole space then $k =0$.
We study rank one matrices in $\mathcal{W}(A)$.
The projectivization of the set of rank one $I \times I$ symmetric matrices is the second Veronese embedding of $\PP_\CC^{I-1}$. 
We denote it by $\Vcal_I$. 
It is the image of the map \begin{align*} 
\phi: \PP_\CC^{I-1} & \to \PP_\CC^{m-1} \\ 
[x_1 : \ldots : x_I] & \mapsto [x_1^2 : x_1 x_2: \ldots : x_I^2],
\end{align*} 
see~\cite[Exercise 2.8 and Example 18.13]{harris1992algebraic}.
The intersection $\Vcal_I\cap \Wcal(A)$ consists of all rank one matrices, up to scale, that lie in 
$\Span \{ \mathbf{a}_j^{\otimes 2} : j = 1, \ldots, J \}$. 
In particular, it
contains $\fa_1^{\otimes 2}, \ldots,\fa_J^{\otimes 2}$.
The rank one condition converts~\eqref{eqn:linear} into the system of quadrics
\begin{equation}
    \label{eqn:quadrics}
    f_1(\mathbf{x}) =\sum \limits_{1\leq i\leq j \leq I}\lambda^{(1)}_{ij}x_i x_j \quad \ldots \quad f_k(\mathbf{x}) =\sum \limits_{1\leq i\leq j \leq I}\lambda^{(k)}_{ij}x_i x_j. 
\end{equation} 
The intersection $\Vcal_I\cap \Wcal(A)$ is the vanishing locus of the quadrics $f_1, \ldots, f_k$, which we denote by $V(f_1, \ldots, f_k)$. We say $\{f_1,\ldots,f_k\}$ is a system of quadrics defining $\Vcal_I\cap \Wcal(A)$. 
Proposition~\ref{prop: complex identifiable} says that $A$ is complex identifiable if and only if $V(f_1,\ldots,f_k) = \{ \fa_1^{\otimes 2},\ldots,\fa_J^{\otimes 2} \}$. Theorem~\ref{thm: iff for identifiablity} says that $A$ is identifiable if and only if $V(f_1,\ldots,f_k)$ does not contain any \emph{real} points other than $\fa_1^{\otimes 2},\ldots,\fa_J^{\otimes 2}$.

\begin{example}
\label{ex:A_to_quadric}
Let $A$ be the matrix from Example~\ref{ex:id}. The linear equations defining $\Wcal(A)$ are the rows of the matrix 
$$\begin{pNiceMatrix}[first-row]
z_{11} & z_{12} & z_{13} & z_{14} & z_{22} & z_{23} & z_{24} & z_{33} & z_{34} & z_{44} \\
0 & 1 & 0 & 0 & 0 & -1 & 1 & 0 & 0 & 0\\
0 & 0 & 1 & 0 & 0 & -1 & 1 & 0 & 0 & 0\\
0 & 1 & 0 & 0 & 0 & -1 & 0 & 0 & 1 & 0\\
0 & 0 & 0 & 1 & 0 & 0 & 0 & 0 & 0 & 0\\
\end{pNiceMatrix}.$$
The corresponding system of quadrics defining $\Vcal_I \cap \Wcal(A)$ is obtained by replacing $z_{ij}$ by $x_i x_j$, to give 
    $f_1  = x_1x_2-x_2x_3+x_2x_4$, $f_2  = x_1x_3-x_2x_3+x_2x_4$, $f_3 = x_1x_2-x_2x_3+x_3x_4$, $f_4  = x_1x_4$.
\end{example}

\section{Systems of quadrics}
\label{sec:quadrics}

Quadrics have been studied as far back as 300BC~\cite{heath1921history}. They remain a popular topic in algebraic geometry, see e.g.~\cite{bashelor2008enumerative,odehnal2020universe,fevola2020pencils}.
In this section, we prove results for systems of quadrics, which may be of independent interest, and which are building blocks of our proof of Theorem~\ref{thm: real generic matrix result}. We prove the following quadric restatement of Theorem~\ref{thm: complex identifiable} in Section~\ref{sec:complex_id}.
The case $J\leq {I\choose 2}$ is \cite[Proposition 3.2]{kileel2019subspace}.

\begin{theorem}
\label{thm:quadrics}    
Let $\Vcal_I$ be the second Veronese embedding of $\PP_{\CC}^{I-1}$. Suppose that $\mathbf{v}_1^{\otimes 2},\ldots,\mathbf{v}_J^{\otimes2}$ are generic points on $\Vcal_I$ with $\Wcal(A)$ their projective linear span. Let the system of quadrics defining $\Vcal_I\cap \Wcal(A)$ be $\{f_1,\ldots,f_k\}$, with 
vanishing locus $V(f_1,\ldots,f_k)$.
\begin{enumerate}
    \item If $J\leq {I \choose 2}$ or if $(I,J) = (2,2)$ or $(3,4)$, then $V(f_1,\ldots,f_k) = \{\mathbf{v}_1^{\otimes2},\ldots,\mathbf{v}_J^{\otimes 2}\}.$ 
    \item If $J>{I \choose 2}+1$ or if $J> {I \choose 2}$ for $I\geq 4$, then $V(f_1,\ldots,f_k) \supsetneqq \{\mathbf{v}_1^{\otimes2},\ldots,\mathbf{v}_J^{\otimes2}\}.$ 
\end{enumerate}
\end{theorem}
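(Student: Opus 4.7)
My plan is to handle the cases not covered by \cite[Proposition 3.2]{kileel2019subspace}, which already establishes part (1) for $J\le \binom{I}{2}$. Recall that $\Vcal_I\subset \PP_\CC^{m-1}$ is an irreducible variety of dimension $I-1$ and degree $2^{I-1}$ (the degree of the second Veronese of $\PP^{I-1}$), while $\Wcal(A)$ is a projective linear subspace of dimension $J-1$. The expected dimension of their intersection is
$$d=(I-1)+(J-1)-(m-1)=J-\binom{I}{2}-1.$$
I would split the remaining cases by the sign of $d$ and, when $d=0$, compare $J$ to $\deg \Vcal_I=2^{I-1}$.

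For the two remaining cases of part (1), namely $(I,J)=(2,2)$ and $(3,4)$, we have $d=0$ and $J=2^{I-1}$. By B\'ezout, the intersection of $\Vcal_I$ with a linear subspace of complementary dimension consists of $\deg \Vcal_I=2^{I-1}$ points, counted with multiplicity. Since the $J$ generic points $\mathbf{v}_1^{\otimes 2},\ldots,\mathbf{v}_J^{\otimes 2}$ are distinct and lie in $\Vcal_I\cap \Wcal(A)$ and $J=2^{I-1}$, they already account for the full intersection, so $V(f_1,\ldots,f_k)=\{\mathbf{v}_1^{\otimes 2},\ldots,\mathbf{v}_J^{\otimes 2}\}$.

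For part (2) I split into two sub-cases. When $J\ge \binom{I}{2}+2$, we have $d\ge 1$; the classical dimension inequality for intersections with a linear subspace yields that every component of $\Vcal_I\cap \Wcal(A)$ has dimension at least $d$, so the intersection contains a positive-dimensional subvariety and is infinite, strictly larger than $\{\mathbf{v}_1^{\otimes 2},\ldots,\mathbf{v}_J^{\otimes 2}\}$. When $J=\binom{I}{2}+1$ with $I\ge 4$, we have $d=0$ and $J<2^{I-1}$ (an induction whose step reduces to $(I-1)(I-2)/2\ge 1$, valid for $I\ge 3$, with base case $I=4$ giving $8>7$). For generic $\mathbf{v}_j$ one shows that $\Wcal(A)$ meets $\Vcal_I$ transversally at each $\mathbf{v}_j^{\otimes 2}$, so that the intersection is a reduced $0$-dimensional scheme of degree $2^{I-1}$; subtracting the $J$ known intersection points leaves $2^{I-1}-J\ge 1$ additional points, forcing $V(f_1,\ldots,f_k)\supsetneq \{\mathbf{v}_1^{\otimes 2},\ldots,\mathbf{v}_J^{\otimes 2}\}$.

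The main obstacle I expect is the transversality claim in the last sub-case: $\Wcal(A)$ is not a generic linear subspace of $\PP_\CC^{m-1}$ but is constrained to pass through the $J$ generic points $\mathbf{v}_j^{\otimes 2}$, so Kleiman--Bertini does not apply off the shelf. I would address this by using the explicit tangent space $T_{\mathbf{v}_j^{\otimes 2}}\Vcal_I=\{\mathbf{v}_j\otimes \mathbf{u}+\mathbf{u}\otimes \mathbf{v}_j : \mathbf{u}\in\CC^I\}$, an $I$-dimensional affine subspace, and showing that for generic $\mathbf{v}_1,\ldots,\mathbf{v}_J$ its intersection with $\Wcal(A)=\mathrm{span}(\mathbf{v}_1^{\otimes 2},\ldots,\mathbf{v}_J^{\otimes 2})$ reduces to $\CC\cdot\mathbf{v}_j^{\otimes 2}$. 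Since non-transversality is a closed condition on $(\mathbf{v}_1,\ldots,\mathbf{v}_J)$, it suffices either to exhibit one non-degenerate configuration or to argue by a Bertini-type dimension count on the incidence variety parameterising tuples whose span fails transversality at some $\mathbf{v}_j^{\otimes 2}$.
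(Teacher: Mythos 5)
Your skeleton matches the paper's: part (1) for $J\le\binom{I}{2}$ via the trisecant-type result, part (2) for $J\ge\binom{I}{2}+2$ via the projective dimension inequality (the paper invokes Krull's principal ideal theorem, to the same effect), and the borderline case $J=\binom{I}{2}+1$ via a B\'ezout count comparing $J$ with $\deg\Vcal_I=2^{I-1}$. The arithmetic ($d=J-\binom{I}{2}-1$, and $\binom{I}{2}+1<2^{I-1}$ for $I\ge 4$ with equality for $I\le 3$) is all correct. However, the load-bearing step is exactly the one you flag and do not carry out: for $J=\binom{I}{2}+1$ you need that $\Wcal(A)\cap\Vcal_I$ is zero-dimensional and that the $\mathbf{v}_j^{\otimes 2}$ appear with multiplicity one, even though $\Wcal(A)$ is constrained to pass through $J$ points of $\Vcal_I$ and so is not a priori a generic element of the Grassmannian. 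Without this, the degree $2^{I-1}$ could be absorbed by excess multiplicity at the $\mathbf{v}_j^{\otimes 2}$ (or by a positive-dimensional component), and no extra point is forced. Note the same issue already infects your $(3,4)$ case: your B\'ezout count there presupposes that the $\PP^3$ spanned by four points of the Veronese surface meets it in a finite scheme, which again is not automatic. Your two proposed remedies (an explicit tangent-space computation at one configuration plus semicontinuity, or an incidence-variety dimension count) are plausible, but as written they are a plan rather than a proof, so the argument has a genuine gap at its central point.

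The paper closes this gap differently, and you may find its route cleaner than either of yours: Lemma~\ref{lemma: generic A.A intersects in 2 power I-1 points} shows that the map $(\mathbf{v}_1,\ldots,\mathbf{v}_J)\mapsto\Span\{\mathbf{v}_1^{\otimes 2},\ldots,\mathbf{v}_J^{\otimes 2}\}$ hits a dense open subset of $\Gr(J-1,m-1)$. The key input is that a generic linear space $\Lcal$ of dimension $J-1$ is spanned by $\Lcal\cap\Vcal_I$ (by iterating~\cite[Proposition~18.10]{harris1992algebraic}, since generic hyperplane sections of an irreducible nondegenerate variety remain irreducible and nondegenerate), so generic spans of points on $\Vcal_I$ \emph{are} generic linear spaces, and ordinary B\'ezout then gives $2^{I-1}$ distinct, reduced intersection points with no transversality computation needed. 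If you want to salvage your own route, the cheapest fix is to exhibit one explicit tuple for which the intersection is $2^{I-1}$ reduced points and invoke openness of that condition; but you would still have to verify that such a witness can be chosen so that the deformation stays within the constrained family of spans of points on $\Vcal_I$, which is precisely what the paper's dominance argument handles for free.
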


\begin{example}
We revisit Examples~\ref{ex:id} and~\ref{ex:A_to_quadric}.
A Macaulay2 computation~\cite{grayson2002macaulay2} confirms that $V(f_1,f_2,f_3,f_4)=\{\fa_1^{\otimes 2},\ldots,\fa_6^{\otimes 2}\}$, which proves that $A$ is complex identifiable (and, in particular, identifiable), by Theorem~\ref{thm:quadrics}. 
\end{example}

We prove the following result in Section~\ref{sec:real_quadrics}.
It is used in the identifiability result for $J={I\choose 2}+1$ in Theorem~\ref{thm: real generic matrix result}.
By an open set of systems of $I - 1$ quadrics, we mean the coefficients of the quadrics form an open set in the space of $(I - 1) {I+1\choose 2}$ coefficients.

\begin{theorem}\label{thm: quadrics intersect even integer positive probability}
For every even integer $\ell$ from $0$ to $2^{I-1}$, there is an open set of systems of $I-1$ quadrics 
in $\RR[x_1,\ldots,x_I]$
that have $2^{I-1}$ distinct intersection points, of which $\ell$ are real.
\end{theorem}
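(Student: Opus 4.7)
The plan is a deformation argument. I will exhibit an initial system $f^{(0)}$ of $I-1$ real quadrics whose $2^{I-1}$ projective intersection points are all real and transverse, a terminal system $f^{(1)}$ whose intersection points are all complex and transverse (so none is real), and a generic path between them in the coefficient space of $(I-1)$-tuples of quadrics. Along the path, the number of real intersection points jumps by $\pm 2$ at each transverse crossing of the discriminant locus, so by a discrete intermediate value argument every even value in $\{0,2,\ldots,2^{I-1}\}$ is attained on an open interval of the path. Openness of each chamber of the complement of the discriminant in coefficient space then produces the required open sets.

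The endpoints are as follows. Let $f^{(0)}_i = x_i^2 - x_I^2$ for $i=1,\ldots,I-1$: the solutions $[\varepsilon_1{:}\cdots{:}\varepsilon_{I-1}{:}1]$ with $\varepsilon_i\in\{\pm 1\}$ are real, distinct, and transverse (the Jacobian on the chart $x_I=1$ is $\mathrm{diag}(2\varepsilon_1,\ldots,2\varepsilon_{I-1})$). Let $f^{(1)}_i = x_i^2 + x_I^2$: the $2^{I-1}$ complex solutions $[\pm\ii{:}\cdots{:}\pm\ii{:}1]$ are distinct and transverse, and none is real. Connect these systems by
$$ f^{(t)} = (1-t)\,f^{(0)} + t\,f^{(1)} + t(1-t)\,g, $$
for a generic quadric perturbation $g=(g_1,\ldots,g_{I-1})$; the perturbation is needed because the unperturbed midpoint $f^{(1/2)}_i = \tfrac12 x_i^2$ is highly degenerate. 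By a standard Thom transversality argument applied to the discriminant hypersurface $\Delta$ in coefficient space (the locus of $(I-1)$-tuples of quadrics with fewer than $2^{I-1}$ distinct complex roots), for generic $g$ the path $f^{(t)}$ meets $\Delta$ at finitely many times, each a transverse crossing of the codimension-one stratum of $\Delta$ at which exactly one intersection point is a simple double root.

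Let $a(t)$ denote the number of real intersection points of $f^{(t)}$. Away from $\Delta$, the function $a$ is locally constant by the implicit function theorem applied to the $2^{I-1}$ simple roots. At a transverse crossing of the codim-one stratum, the colliding pair of roots is governed locally by a single real quadratic factor whose discriminant changes sign transversally, so on one side the double root splits into two real simple roots and on the other into two complex conjugate simple roots; hence $a$ jumps by exactly $\pm 2$. On each open interval between consecutive crossings, $a$ takes a constant even value; the finite sequence of these values runs from $2^{I-1}$ to $0$ with successive differences $\pm 2$, and therefore attains every even integer in $\{0,2,\ldots,2^{I-1}\}$ (skipping a value would require a jump of size $\neq 2$). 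For any $t$ in an interval where $a(t)=\ell$, continuous dependence of simple roots on coefficients produces an open neighborhood of $f^{(t)}$ in coefficient space on which the real count is also $\ell$. The main obstacle is the transversality of the generic one-parameter path to the codim-one stratum of $\Delta$ together with the $\pm 2$-jump analysis at each crossing; both are classical facts for real polynomial systems with isolated roots, verifiable by a jet-level Bertini argument and a local normal form for a real quadratic degeneration.
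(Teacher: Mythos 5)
Your proof is correct in outline and reaches the statement by a genuinely different route than the paper. Both arguments share the same first reduction: on the dense open set of systems whose $2^{I-1}$ complex intersection points are distinct, the number of real points is locally constant, so it suffices to realize each even $\ell$ once. From there the paper proceeds by explicit construction: it dehomogenizes and induces on the number of variables, at each step appending a quadric of the form $(x_1-\alpha)^2 - x_{\mathrm{new}}^2$ (which doubles the real count) or $x_1^2 + x_{\mathrm{new}}^2 - \beta^2$ with $\beta$ chosen between the two largest real $x_1$-coordinates (which yields $2\ell-2$), so that every even value is hit directly. You instead run the standard discriminant-crossing argument of real enumerative geometry: exhibit the two extreme chambers ($f_i^{(0)}=x_i^2-x_I^2$ all real, $f_i^{(1)}=x_i^2+x_I^2$ none real), join them by a generically perturbed path, and use the discrete intermediate value theorem on the $\pm 2$ jumps. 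The paper's approach buys elementarity and fully explicit witnesses; yours buys a conceptual explanation of why \emph{every} intermediate value of the right parity between two realized extremes must occur, at the cost of invoking nontrivial (though classical) inputs: that the discriminant is a hypersurface whose generic point carries exactly one simple double point, that this double point is necessarily real for a real system with a unique node (since otherwise its conjugate would be a second node), parametric transversality to avoid the deeper strata, and the local $A_1$ normal form $y^2=c(s)$ governing the $\pm2$ jump. Each of these deserves at least a sentence of justification in a final write-up — in particular the codimension count showing that systems with two nodes or a worse singularity form a set of codimension at least two — but none is a gap; your observation that the unperturbed segment degenerates completely at $t=\tfrac12$, forcing the $t(1-t)g$ perturbation, is exactly the right care to take.
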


\subsection{Complex solutions to a system of quadrics}
\label{sec:complex_id}

In this section, we prove Theorems~\ref{thm:quadrics} and~\ref{thm: complex identifiable}.
As above, let $\Vcal_I$ denote the second Veronese embedding of $\PP_{\CC}^{I-1}$ in $\PP_\CC^{m-1}$, where $m = { I + 1 \choose 2}$.

\begin{lemma}\label{lemma:geenric full rank}
Let $\mathbf{v}_1^{\otimes 2},\ldots,\mathbf{v}_J^{\otimes 2}$ be generic points of $\Vcal_I$. Then the matrix in $\CC^{{{I+1}\choose 2}\times J}$ with columns $\mathbf{v}_1^{\otimes 2},\ldots,\mathbf{v}_J^{\otimes 2}$ has full rank.
\end{lemma}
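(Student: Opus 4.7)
The plan is to exploit the fact that having maximal rank (equal to $\min(m, J)$, where $m = \binom{I+1}{2}$) is a Zariski-open condition on the parameter space $(\PP_\CC^{I-1})^J$, since its complement is cut out by the simultaneous vanishing of the maximal minors of the matrix, which are polynomial in the entries of the $\mathbf{v}_j$ and hence in the homogeneous coordinates. Pulling these polynomial conditions back through the Veronese parametrization keeps the locus closed. Consequently, to establish the lemma it suffices to exhibit a single tuple $\mathbf{v}_1, \ldots, \mathbf{v}_J$ for which the matrix of outer products attains rank $\min(m, J)$; the generic tuple will then do so as well.

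The main step is to construct $m$ vectors in $\CC^I$ whose squared outer products span the $m$-dimensional space of symmetric $I \times I$ matrices. I would take the $I$ standard basis vectors $\mathbf{e}_1, \ldots, \mathbf{e}_I$, giving the diagonal matrix units $\mathbf{e}_i^{\otimes 2}$, together with the $\binom{I}{2}$ vectors $\mathbf{e}_i + \mathbf{e}_j$ for $1 \leq i < j \leq I$. From the identity $(\mathbf{e}_i + \mathbf{e}_j)^{\otimes 2} - \mathbf{e}_i^{\otimes 2} - \mathbf{e}_j^{\otimes 2} = \mathbf{e}_i \mathbf{e}_j\T + \mathbf{e}_j \mathbf{e}_i\T$ one recovers the off-diagonal symmetric units. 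Together these $I + \binom{I}{2} = m$ outer products span the space of symmetric matrices, and being $m$ vectors in an $m$-dimensional space they are automatically linearly independent.

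With this basis in hand, the conclusion is immediate in both regimes. If $J \leq m$, let $\mathbf{v}_1, \ldots, \mathbf{v}_J$ be any $J$ of the vectors constructed above; the corresponding outer products, as a subset of a basis, are linearly independent, so the matrix has rank $J$. If $J \geq m$, let the first $m$ of the $\mathbf{v}_j$ be the $m$ vectors above and let the remaining $J - m$ be arbitrary; the resulting matrix contains an invertible $m \times m$ submatrix and hence has rank $m$. In either case the maximal rank $\min(m, J)$ is realized, so the full-rank locus is non-empty and, being Zariski-open, contains a dense open subset of $(\PP_\CC^{I-1})^J$.

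I do not expect any real obstacle here: the argument is essentially a direct verification that the Veronese embedding is non-degenerate, combined with the standard principle that maximal rank is a generic condition. The only point requiring a bit of care is the explicit identification of an $m$-element spanning set among the squared outer products, handled by the construction in the second paragraph.
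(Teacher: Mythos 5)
Your proposal is correct and follows essentially the same route as the paper: both argue that full rank is a generic (open) condition and then exhibit the witness family $\{\mathbf{e}_i^{\otimes 2}\} \cup \{(\mathbf{e}_i+\mathbf{e}_j)^{\otimes 2} : i<j\}$, handling $J \leq \binom{I+1}{2}$ by taking a subset and $J > \binom{I+1}{2}$ by appending arbitrary rank one symmetric matrices. Your extra details (the identity recovering the off-diagonal units and the explicit openness argument) are just a more spelled-out version of the paper's proof.
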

\begin{proof}
The highest rank is attained for generic matrices, so it suffices to exhibit an example with full rank. 
Suppose $\mathbf{e}_1,\ldots,\mathbf{e}_I$ are canonical basis vectors in $\CC^I$.
Let $S = \{\mathbf{e}_1^{\otimes 2},\ldots,\mathbf{e}_I^{\otimes 2} 
\} \cup \{ (\mathbf{e}_i+\mathbf{e}_j)^{\otimes 2}: i<j\}$.
Then $|S| = m = {{I + 1 \choose 2}}$.
A subset of $S$
of size $J \leq m$ 
is linearly independent. 
When $J> m$, taking the union of $S$ with any $J-m$ symmetric rank one matrices forms a linear space of dimension $m$.
In both cases, the matrix has full rank. 
\end{proof}

\begin{lemma}\label{lemma: generic A.A intersects in 2 power I-1 points}
For ${I\choose 2}+1$ generic points in $\Vcal_I$, with span $\Wcal$, the intersection $\Vcal_I\cap\Wcal$ consists of $2^{I-1}$ distinct points. 
\end{lemma}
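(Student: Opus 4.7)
\textbf{Proof plan for Lemma~\ref{lemma: generic A.A intersects in 2 power I-1 points}.}
The plan is to combine the classical degree of the second Veronese with a Bertini-type transversality argument, and then transfer the genericity conclusion from "generic linear subspace of $\mathbb{P}^{m-1}$'' to "span of generic points of $\mathcal{V}_I$'' via a dimension count on a Grassmannian. First, recall that $\mathcal{V}_I \subset \mathbb{P}_{\CC}^{m-1}$ with $m={I+1\choose 2}$ is an irreducible projective variety of dimension $I-1$ and degree $2^{I-1}$ (the standard degree of the second Veronese embedding of $\mathbb{P}^{I-1}$). Set $J={I\choose 2}+1$. By Lemma~\ref{lemma:geenric full rank}, the span $\mathcal{W}$ of generic points $\mathbf{v}_1^{\otimes 2},\ldots,\mathbf{v}_J^{\otimes 2}$ has projective dimension ${I\choose 2}$ and hence codimension $I-1=\dim \mathcal{V}_I$ in $\mathbb{P}^{m-1}$. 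Thus by B\'ezout, the scheme-theoretic intersection $\mathcal{V}_I\cap\mathcal{W}$ has length exactly $2^{I-1}$ whenever it is $0$-dimensional and transverse.

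Second, by Bertini's theorem (applied to the pencil of hyperplane sections of $\mathcal{V}_I$) there is a nonempty Zariski open subset $U$ of the Grassmannian $\mathbb{G}:=\mathrm{Gr}({I\choose 2},\mathbb{P}^{m-1})$ such that for every $L\in U$, the intersection $\mathcal{V}_I\cap L$ is reduced and consists of exactly $2^{I-1}$ distinct points. The task then reduces to showing that a generic span $\mathcal{W}$ lies in $U$. For this I would use the rational map
\[
\varphi:\mathcal{V}_I^{\,J}\dashrightarrow\mathbb{G},\qquad (p_1,\ldots,p_J)\mapsto\mathrm{span}(p_1,\ldots,p_J),
\]
defined on the open locus of linearly independent tuples (nonempty by Lemma~\ref{lemma:geenric full rank}). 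Both source and target have dimension $({I\choose 2}+1)(I-1)$: indeed $\dim\mathcal{V}_I^{\,J}=J(I-1)$ and $\dim\mathbb{G}=({I\choose 2}+1)(m-1-{I\choose 2})=({I\choose 2}+1)(I-1)$. Over any $W\in U$ the fiber $\varphi^{-1}(W)$ is a subset of $(\mathcal{V}_I\cap W)^J$, hence finite. If $\varphi$ is dominant, this forces the image to contain a Zariski dense open, so $\varphi^{-1}(U)$ is a nonempty Zariski open subset of $\mathcal{V}_I^{\,J}$, proving the statement for generic tuples.

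The main obstacle is thus the dominance of $\varphi$, or equivalently: for generic $W\in U$, the $2^{I-1}$ points of $\mathcal{V}_I\cap W$ span $W$ (so that one can extract a spanning $J$-subtuple, which exists since $2^{I-1}\ge J$ for $I\ge 2$). This I would deduce from the nondegeneracy of the Veronese embedding: if for $W$ in a dense open of $U$ the span of $\mathcal{V}_I\cap W$ were a proper subspace $W'\subsetneq W$, then choosing a hyperplane $H\subset\mathbb{P}^{m-1}$ with $H\cap W=W'$ would give $\mathcal{V}_I\cap W\subset H\cap\mathcal{V}_I$; varying $W$ and tracking $(W,H)$ in the incidence correspondence $\{(W,H):\mathcal{V}_I\cap W\subset H,\ W\not\subset H\}$ yields, via the nondegeneracy of $\mathcal{V}_I$ in $\mathbb{P}^{m-1}$, a family of hyperplane sections with too many parameters for $\mathbb{G}$ to accommodate. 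A cleaner alternative that sidesteps this subtlety is to exhibit one explicit configuration of ${I\choose 2}+1$ points on $\mathcal{V}_I$ whose span meets $\mathcal{V}_I$ in $2^{I-1}$ distinct points (for example by perturbing the coordinate configuration of Lemma~\ref{lemma:geenric full rank}) and then invoke the openness of the Bertini condition to conclude.
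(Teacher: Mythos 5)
Your overall strategy is the same as the paper's: B\'ezout gives $2^{I-1}$ distinct points for a generic linear space of codimension $I-1$, and the genericity is transferred to spans of Veronese points via the rational map from tuples to the Grassmannian, the crux being that the image of this map meets the good (Bertini) locus. You correctly isolate this crux as the dominance of $\varphi$, equivalently that for a generic $W$ in the good locus the points of $\Vcal_I\cap W$ span $W$ — but neither of your two proposed ways of establishing it is actually carried out, and this is exactly where the content of the lemma lies. The paper closes this step by applying \cite[Proposition 18.10]{harris1992algebraic} $I-1$ times: $\Vcal_I$ is irreducible and non-degenerate, its generic hyperplane sections remain irreducible and non-degenerate while the dimension is at least $2$, so a generic $(J-1)$-plane is spanned by its intersection with $\Vcal_I$; choosing a basis from that intersection shows the set $\mathcal{U}$ of spans of $J$ Veronese points is dense in the Grassmannian, which is precisely the dominance you need.

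Concretely, your first route — the incidence correspondence $\{(W,H):\Vcal_I\cap W\subset H,\ W\not\subset H\}$ ending in ``too many parameters for $\mathbb{G}$ to accommodate'' — is not a worked argument: you would need an actual dimension count of that correspondence and a comparison with $\dim\mathbb{G}$, and as stated it amounts to a restatement of the generic-sections-are-nondegenerate fact you are trying to prove. Your second route is logically sound in outline (the condition that the span meets $\Vcal_I$ in $2^{I-1}$ distinct points is Zariski open on tuples, and $(\PP_\CC^{I-1})^J$ is irreducible, so one verified example suffices), but you do not produce the example: ``perturbing the coordinate configuration of Lemma~\ref{lemma:geenric full rank}'' is not explicit, and checking that a concrete span of ${I\choose 2}+1$ squares meets $\Vcal_I$ transversely in exactly $2^{I-1}$ reduced points is precisely the missing work — the coordinate configuration itself is degenerate, and without a verified example the open set $\varphi^{-1}(U)$ could a priori be empty. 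So the proposal has a genuine gap at the dominance step, the step the paper resolves with the iterated hyperplane-section argument.
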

\begin{proof}
The variety $\Vcal_I$ has dimension $I-1$ and degree $2^{I-1}$~\cite[Exercise 2.8 and Example 18.13]{harris1992algebraic}. 
A generic linear space of codimension $I-1$ therefore intersects $\Vcal_I$ in $2^{I-1}$ distinct points, by B\'ezout's Theorem. 
Our goal is to show that $\Wcal$ from the statement 
is sufficiently generic:
a codimension $I-1$ subspace that intersects $\mathcal{V}_I$ in degree many distinct points.

Let $J = { I \choose 2} + 1$ and $m = {{I + 1 \choose 2}}$.
The space $\Wcal$ is spanned by $J$ points, and lives in $\mathbb{P}_\CC^{m-1}$. It has projective dimension $(J-1)$, by 
Lemma~\ref{lemma:geenric full rank}. Hence it has codimension $I-1$, since $(I - 1) + (J - 1) = m-1$.
That is, for generic $\mathbf{v}_1, \ldots, \mathbf{v}_J\in \PP_\CC^{I-1}$, the projective linear space $\Span \{ \mathbf{v}_1^{\otimes 2},\ldots,\mathbf{v}_J^{\otimes 2} \}$ is an element of 
the Grassmannian variety of $(J-1)$-dimensional projective linear spaces in $\PP_\CC^{m-1}$, which we denote 
$\Gr(J-1, m-1)$.

Let $\mathcal{U} \subset \Gr(J-1, m-1)$ be the set of spaces spanned by $J$ points on $\Vcal_I$. The set $\mathcal{U}$ is open and dense in $\Gr(J-1, m-1)$, as follows. 
For a generic $(J-1)$-dimensional linear space $\mathcal{L}$, the intersection $\Vcal_I \cap \mathcal{L}$ spans~$\mathcal{L}$, by~\cite[Proposition 18.10]{harris1992algebraic} applied $I-1$ times, since the variety $\Vcal_I$ is irreducible and non-degenerate (not contained in any hyperplane) and its generic hyperplane sections are also non-degenerate and irreducible if $\dim \Vcal_I\geq 2$. 
Choosing a basis of $\mathcal{L}$ from $\Vcal_I \cap \mathcal{L}$ shows that $\mathcal{U}$ is open and dense.

Let $\mathcal{U}^\prime$ denote the elements of $\Gr(J-1, m-1)$ that intersect $\Vcal_I$ in degree many distinct points. 
Then $\mathcal{U}^\prime$ is open and dense, by B\'ezout's theorem.
Hence $\mathcal{U} \cap \mathcal{U}^\prime$ is open and dense in $\Gr(J-1, m-1)$. 
Define the map 
\begin{align*} \Phi: (\mathbb{P}_\CC^{I-1})^J & \dashedrightarrow \Gr(J-1, m-1) \\ 
(\mathbf{v}_1,\ldots,\mathbf{v}_J) & \mapsto \Span \{ \mathbf{v}_1^{\otimes 2},\ldots,\mathbf{v}_J^{\otimes 2} \} ,
\end{align*}
where $\Span$ here denotes the projective span.
It is defined almost everywhere, by Lemma~\ref{lemma:geenric full rank}. 
The pre-image $\Phi^{-1}(\mathcal{U} \cap \mathcal{U}^{\prime})$ consists of collections of points for which $\Vcal_I \cap \Wcal$ is $2^{I-1}$ distinct points. 
As the pre-image of a dense open set, it is dense and open in $(\mathbb{P}_\CC^{I-1})^J$. 
\end{proof}

\noindent We use the following algebraic geometry result to prove Theorem \ref{thm:quadrics}.

\begin{theorem}[Generalized Trisecant Lemma, see {\cite[Proposition 2.6]{chiantini2002weakly}}]\label{lemma:trisecant}
Let $X \subseteq \PP_\CC^{m-1}$ be an irreducible, reduced, non-degenerate projective variety of dimension $I-1$ and let $J$ be a non-negative integer with $(J-1) + (I - 1) < m-1$.
Let $P_1,\ldots,P_J$ be general points on $X$. Then the intersection of $X$ with
the subspace spanned by $P_1,\ldots,P_J$ is the points $P_1,\ldots,P_J$.
\end{theorem}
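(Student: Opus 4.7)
The plan is to prove the lemma by induction on $J$, with the classical Trisecant Lemma (the $J=2$ case) as the base case. The key inductive tool is projection from a general point of $X$, which lowers $J$ by one while preserving the hypothesis $(J-1)+(I-1)<m-1$ (since simultaneously lowering the ambient dimension by one keeps the inequality intact).

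For the inductive step, I would assume the lemma for $J-1$ and take general $P_1, \ldots, P_J$ on $X$, with $L := \Span\{P_1, \ldots, P_J\}$. Let $\pi: \PP_\CC^{m-1} \dashedrightarrow \PP_\CC^{m-2}$ be projection from $P_J$. Since $X$ is irreducible, non-degenerate of dimension $I-1$, and $P_J$ is a general (smooth) point, the closure $X' := \overline{\pi(X \setminus \{P_J\})}$ is irreducible, non-degenerate, and of dimension $I-1$ in $\PP_\CC^{m-2}$; the images $\pi(P_1), \ldots, \pi(P_{J-1})$ are general on $X'$ and span the $(J-2)$-plane $\pi(L)$. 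The inductive hypothesis applied to $X'$ and these $J-1$ points yields $X' \cap \pi(L) = \{\pi(P_1), \ldots, \pi(P_{J-1})\}$. Lifting back, any $Q \in (X \cap L) \setminus \{P_J\}$ satisfies $\pi(Q) = \pi(P_i)$ for some $i$, so $Q$ lies on the line $\ell_i \subset L$ through $P_i$ and $P_J$; the $J=2$ case then gives $X \cap \ell_i = \{P_i, P_J\}$, forcing $Q = P_i$ and completing the inductive step.

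The main obstacle will be the $J=2$ base case: showing that for general $P, P' \in X$, the chord $\Span\{P, P'\}$ meets $X$ only in $\{P, P'\}$. I would attack this via a dimension count on the incidence
\[
\Sigma := \{(P, P', Q) \in X^3 : Q \in \Span\{P, P'\}, \, Q \notin \{P, P'\}\}.
\]
Projecting $\Sigma$ to the $Q$-factor, the fiber over a generic $Q \in X$ consists of pairs $(P, P') \in (X \setminus \{Q\})^2$ with $P \neq P'$ and $\pi_Q(P) = \pi_Q(P')$, where $\pi_Q: X \dashedrightarrow \PP_\CC^{m-2}$ is projection from $Q$. The crucial sub-step is to show that $\pi_Q|_X$ is generically injective; this uses the non-degeneracy of $X$ together with the codimension-at-least-two condition $I < m-1$ (ruling out $X$ being covered by lines through a single fixed point) via a standard Bertini-type argument. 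Granted this, the fibers over generic $Q$ have dimension strictly less than $I-1$, so $\dim \Sigma < 2(I-1) = \dim X^2$; hence the first projection $\Sigma \to X^2$ is not dominant, and no extra $Q$ exists on a generic chord, completing the base case and the proof.
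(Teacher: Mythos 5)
The paper never proves this lemma: it is imported verbatim from Chiantini--Ciliberto \cite[Proposition 2.6]{chiantini2002weakly}, so there is no internal proof to compare against, and your proposal has to stand on its own. Your inductive step is the standard reduction and is essentially sound: for a general $P_J$ the inner projection preserves irreducibility, reducedness, non-degeneracy and dimension (a variety is not a cone with vertex at a general point of itself, which you should say explicitly), the images of $P_1,\ldots,P_{J-1}$ are general on the projected variety, the numerical hypothesis $(J-2)+(I-1)<m-2$ is preserved, and the lifting argument via the $J=2$ case is correct.

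The genuine gap is in the base case, which is where all the content of the lemma sits. Your ``crucial sub-step'' --- generic injectivity of $\pi_Q|_X$ for a general $Q\in X$ --- is not an auxiliary fact: it is \emph{equivalent} to the $J=2$ statement, since generic injectivity of $\pi_Q$ says exactly that for general $P$ the line $\Span\{Q,P\}$ meets $X$ only in $Q$ and $P$. So the incidence construction does not reduce the problem, it restates it, and the justification you sketch misidentifies the obstruction: ``$X$ covered by lines through a single fixed point'' is the cone condition, which governs whether $\pi_Q$ drops the dimension of the image, not whether it is generically injective. What has to be excluded is that the general secant through $Q$ meets $X$ a third time, and that is precisely the classical trisecant lemma --- a theorem with real characteristic-zero content (it fails in positive characteristic, e.g.\ for strange curves), so no routine Bertini-type dimension count can deliver it; over $\CC$ one proves it, say, by cutting $X$ with a general linear space to obtain an irreducible non-degenerate curve section and invoking the curve-case trisecant/general position lemma (monodromy, uniform position), or one simply cites it as the paper does. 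Separately, even granting the sub-step, the inference ``generic injectivity $\Rightarrow$ the generic fibre of $\Sigma\to X$ has dimension $<I-1$, hence $\dim\Sigma<2(I-1)$'' is false as stated: if $X$ contains lines through its general point (a cone over a space curve, a scroll, a Segre variety), pairs lying on such a line give fibres of dimension $I-1$ and components of $\Sigma$ of dimension $2(I-1)$. Those components do not dominate $X\times X$, so the theorem survives, but your count does not detect this; the argument must be run component by component on components dominating $X\times X$, with chords contained in $X$ treated separately.
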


\begin{proof}[Proof of Theorem~\ref{thm:quadrics}]
Let $m = {I + 1 \choose 2}$. 
Assume that $J>{I\choose 2}+1$. 
For generic $\mathbf{v}_1, \ldots, \mathbf{v}_J\in \PP_\CC^{I-1}$, the space
$\Wcal = \Span \{ \mathbf{v}_1^{\otimes 2}, \ldots, \mathbf{v}_J^{\otimes 2} \}$ has projective codimension at most $I-2$, 
by Lemma \ref{lemma:geenric full rank}.
The dimension of $V(f_1,\ldots,f_k)$ is therefore at least $(I-1)-(I-2)=1$, 
by Krull's Principal Ideal Theorem~\cite[Theorem 1.11A]{hartshorne2013algebraic}. Hence there are infinitely many points in $\Vcal_I \cap \Wcal$, so $V(f_1,\ldots,f_k)\supsetneqq \{\mathbf{v}_1^{\otimes 2},\ldots,\mathbf{v}_J^{\otimes 2}\}$.

Assume $J={I\choose 2}+1$. 
For generic 
$\mathbf{v}_1^{\otimes 2},\ldots,\mathbf{v}_J^{\otimes 2}$, the intersection $\Vcal_I \cap \Wcal$ consists of $2^{I-1}$ distinct points, by Lemma~\ref{lemma: generic A.A intersects in 2 power I-1 points}. 
When $I\geq 4$, we have $2^{I-1}> J$, hence $V(f_1,\ldots,f_k)\supsetneqq \{\mathbf{v}_1^{\otimes 2},\ldots,\mathbf{v}_J^{\otimes 2}\}$. When $I \leq 3$, we have $2^{I-1}= J$, so $V(f_1,\ldots,f_k)=\{\mathbf{v}_1^{\otimes 2},\ldots,\mathbf{v}_J^{\otimes 2}\}$.

It remains to consider $J\leq {I\choose 2}$. The Veronese variety $\Vcal_I\in \PP^{m-1}$ is irreducible, reduced and non-degenerate with dimension $I-1$. We have $(J-1)+(I-1)<m-1$. Hence $V(f_1,\ldots,f_k)=\{\mathbf{v}_1^{\otimes 2},\ldots,\mathbf{v}_J^{\otimes 2}\}$ for generic $\mathbf{v}_1, \ldots, \mathbf{v}_J\in \PP_\CC^{I-1}$,
by Theorem \ref{lemma:trisecant}.
\end{proof}

\begin{proof}[Proof of Theorem~\ref{thm: complex identifiable}]
Let $\mathbf{v}_1,\ldots,\mathbf{v}_J$ in Theorem~\ref{thm:quadrics} be $\fa_1,\ldots,\fa_J$.
 Theorem~\ref{thm: complex identifiable} is equivalent to the statement about quadrics in Theorem~\ref{thm:quadrics}, see the end of Section~\ref{sec:iden to quadrics}. 
\end{proof}

\subsection{Real solutions to a system of quadrics}
\label{sec:real_quadrics}

In this section, we prove Theorem~\ref{thm: quadrics intersect even integer positive probability}.

\begin{proof}[Proof of Theorem~\ref{thm: quadrics intersect even integer positive probability}]
A system of $I-1$ homogeneous quadrics in $\RR[x_1,\ldots,x_I]$ generically has $2^{I-1}$ complex solutions.  
There is a dense open set of quadric systems 
whose solution set consists of $2^{I-1}$ distinct complex points. The number of real solutions is constant on the connected components of this set. Hence it suffices to find one system with $\ell$ distinct real solutions for each even $0 \leq \ell \leq 2^{I-1}$.
There is a dense open set of quadric systems 
such that any solution has $x_I\neq 0$. Without loss of generality, we dehomogenize the quadrics, intersecting them with the plane $x_I=1$. Then it suffices to find $I-1$ inhomogeneous quadrics in $\RR[x_1,\ldots,x_{I-1}]$ that intersect in $2^{I-1}$ distinct points with $\ell$ distinct real solutions for each even $0 \leq \ell \leq 2^{I-1}$.
We prove this by induction.

When $I=2$, we have a single univariate quadric $g(x)=ax^2+bx+c$. It generically has two distinct roots; there are $0$ or $2$ real roots, depending on the sign of $b^2-4ac$.

Assume the result for $I$: there is a system of $I-2$ quadrics in $\RR[x_1,\ldots,x_{I-1}]$ with $2^{I-2}$ distinct solutions and $\ell$ of them real, for all even 
values $0 \leq \ell \leq 2^{I-2}$.
Choose a real value $\alpha$ such that no solution has $x_1 = \alpha$. Then adding the quadric $(x_1 - \alpha)^2 - x_{I-1}^2$ gives $I-1$ quadrics in $\RR[x_1,\ldots,x_{I-1}]$ with $2^{I-1}$ distinct solutions, of which $2 \ell$ are real.
It remains to find a system of quadrics with $2^{I-1}$ distinct solutions, of which $2 \ell - 2$ are real, for every even $\ell$ in the range $2\leq \ell \leq 2^{I-2}$.
Consider our system of $I-2$ quadrics with $2^{I-2}$ distinct solutions, of which $\ell$ are real. We can apply a change of basis to ensure that the $x_1$ coordinates of the roots have distinct values, since the roots are distinct. Choose $\beta \in \RR$ in between the largest and second largest $x_1$ values that appear among the $\ell$ real roots. Then add the quadric $-\beta^2 + x_1^2 +  x_{I-1}^2$. The resulting system has all solutions distinct and $2 \ell - 2$ of them real. 
\end{proof}

\section{From complex to real identifiability}
\label{sec:complex_to_real}

We specialize from complex to real identifiability to prove Theorem \ref{thm: real generic matrix result}.
Results to study the real solutions are in Section \ref{sec:technical results} and the proof of Theorem~\ref{thm: real generic matrix result} is in Section~\ref{sec:generic identifiability}.

\subsection{The projected second Veronese}\label{sec:technical results}

We introduce the projected second Veronese variety and compute its dimension and degree. 
We then give a criterion for the existence of real points in a variety. The proof of Theorem~\ref{thm: real generic matrix result} applies the criterion to the projected second Veronese variety. 

Changing basis on $\RR^I$ does not affect the identifiability of $A \in \RR^{I \times J}$. That is, when $J \geq I$, if $A$ is identifiable, so is $M A$ for all invertible $M\in \RR^{I \times I}$.
We can therefore assume without loss of generality that a generic $A \in \RR^{I \times J}$ has the form
\begin{equation}
    \label{eqn:specialA}
A=\begin{pmatrix}
    \vdots& &\vdots&\vdots& &\vdots \\
    \mathbf{a}_1 &\cdots&\mathbf{a}_{J-I}&\mathbf{e}_1&\cdots&\mathbf{e}_I \\
    \vdots& &\vdots&\vdots& &\vdots 
\end{pmatrix}.
\end{equation}
This motivates the following definition.

\begin{definition}[The projected second Veronese variety]
\label{definition: projected Veronese}
Consider the map $\varphi: \mathbb{C}^{I} \to \mathbb{C}^{{I \choose 2}}$ with $\varphi(x_1,\ldots,x_I)=(x_1x_2,\ldots,x_{I-1}x_I)$ and the projection map $\pi: (\mathbb{C^\star})^{{I \choose 2}}\to \mathbb{P}_\CC^{{I \choose 2}-1}$. The $I$-th projected second Veronese embedding, denoted $\mathcal{Z}_I$, is the closure of $\pi\circ \varphi((\CC^\star)^I)$ in $\mathbb{P}_\CC^{{I \choose 2}-1}$.
\end{definition}

\begin{proposition}
\label{prop:projected_veronese}
    Let $A \in \RR^{I \times J}$ have the form~\eqref{eqn:specialA}. Let $\Wcal(A)_\pi$ be the projective linear space spanned by $\varphi(\mathbf{a}_1),\ldots, \varphi(\mathbf{a}_{J-I})$. Then $A$ is identifiable if and only if no pair of its columns are collinear and the only real points in the intersection
$\Wcal(A)_\pi \cap \Zcal_I$ are $\varphi(\mathbf{a}_1),\ldots,\varphi(\mathbf{a}_{J-I})$.
\end{proposition}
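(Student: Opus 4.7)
The plan is to translate the criterion of Theorem~\ref{thm: iff for identifiablity}, which concerns real rank-one symmetric matrices in $\mathcal{W}(A)$, into the projected statement about $\mathcal{W}(A)_\pi \cap \mathcal{Z}_I$. The key structural fact is that the last $I$ columns of $A$ contribute the diagonal elementary matrices $\mathbf{e}_i^{\otimes 2}$, whose span is exactly the space $\mathcal{D}$ of diagonal symmetric $I \times I$ matrices. Hence $\mathcal{D} \subseteq \mathcal{W}(A)$, and everything can be analyzed ``modulo the diagonal''.

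I would introduce the surjective linear projection $\rho\colon \mathrm{Sym}^2(\mathbb{R}^I) \to \mathbb{R}^{\binom{I}{2}}$ that reads off the strict upper-triangular entries of a symmetric matrix. It satisfies $\ker\rho = \mathcal{D}$ and $\rho(\fb^{\otimes 2}) = \varphi(\fb)$. Because $\mathcal{D} \subseteq \mathcal{W}(A)$, a symmetric matrix $M$ lies in $\mathcal{W}(A)$ if and only if $\rho(M) \in \rho(\mathcal{W}(A)) = \mathrm{Span}_{\mathbb{R}}\{\varphi(\fa_1),\ldots,\varphi(\fa_{J-I})\}$. Applying this to rank-one matrices $\fb^{\otimes 2}$ rewrites the criterion of Theorem~\ref{thm: iff for identifiablity} as: $A$ is identifiable iff no pair of columns is collinear and no real $\fb$ has $\varphi(\fb)$ in the real linear span of the $\varphi(\fa_j)$, unless $\fb$ is collinear to a column of $A$.

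Next I would split into cases based on $\varphi(\fb)$. If $\varphi(\fb)=0$, then $b_i b_j = 0$ for all $i<j$, so at most one entry of $\fb$ is nonzero, i.e. $\fb$ is collinear to some $\mathbf{e}_i$, a column of $A$; this case causes no obstruction. If $\varphi(\fb) \neq 0$, then $[\varphi(\fb)]$ lies in $\mathcal{W}(A)_\pi$ and, as the image of a real vector under $\pi \circ \varphi$, is a real point of $\mathcal{Z}_I$. Moreover, $\fb$ is collinear to $\fa_j$ for some $j \leq J-I$ precisely when $[\varphi(\fb)] = [\varphi(\fa_j)]$: the easy direction uses $\varphi(\lambda \fa_j) = \lambda^2\varphi(\fa_j)$, while the reverse direction uses that (generically) the fibre of $\pi \circ \varphi$ over $[\varphi(\fa_j)]$ consists only of the projective point $[\fa_j]$. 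For the converse implication, given any real point $[y]\in\mathcal{W}(A)_\pi \cap \mathcal{Z}_I$ distinct from every $[\varphi(\fa_j)]$, I would lift $[y]$ via the parametrization to a real vector $\fb$ with $\varphi(\fb) \propto y$, producing a real rank-one matrix $\fb^{\otimes 2}\in\mathcal{W}(A)$ that is not collinear to any $\fa_j^{\otimes 2}$, and then invoke Theorem~\ref{thm: iff for identifiablity} to conclude non-identifiability.

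The main obstacle will be the real-lifting step in the converse direction: an arbitrary real point of the complex variety $\mathcal{Z}_I$ need not be the image of a real vector under $\pi\circ \varphi$, because extracting $\fb$ from the off-diagonal entries $y_{ij}$ requires certain ratios $y_{ip}y_{iq}/y_{pq}$ to be non-negative. Handling this delicately, by analyzing the sign constraints that identify $\pi\circ\varphi(\mathbb{R}^I)$ inside $\mathcal{Z}_I(\mathbb{R})$ and checking that an obstruction to lifting never produces a spurious real intersection point in the situations that matter, is the heart of the proof. Once this is settled, the rest is a direct translation between Theorem~\ref{thm: iff for identifiablity} and the geometric statement about $\mathcal{W}(A)_\pi \cap \mathcal{Z}_I$.
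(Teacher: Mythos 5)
Your proposal follows essentially the same route as the paper: the paper's (very short) proof likewise observes that the span of $\mathbf{e}_1^{\otimes 2},\ldots,\mathbf{e}_I^{\otimes 2}$ is exactly the space of diagonal matrices, which is contained in $\Wcal(A)$, so that a rank one matrix $\fb^{\otimes 2}$ lies in $\Wcal(A)\cap\Vcal_I$ if and only if its off-diagonal part $\varphi(\fb)$ lies in $\Wcal(A)_\pi\cap\Zcal_I$, and then invokes Theorem~\ref{thm: iff for identifiablity}. The real-lifting subtlety you single out as the main obstacle does resolve (the paper passes over it silently): if $[\varphi(\fb)]$ is a real projective point then after multiplying $\fb$ by a unit-modulus scalar all products $b_ib_j$ become real, which forces $\fb$ to be a real or purely imaginary vector whenever at least three coordinates are nonzero (and the remaining degenerate cases give $\varphi(\fb)$ proportional to $\varphi(\mathbf{e}_i+\mathbf{e}_j)$ or zero), so every relevant real point of $\Zcal_I$ is the image of a real vector.
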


\begin{proof}
The matrix $A$ is identifiable if and only if no pair of its columns are collinear and the real points in the intersection $\Wcal(A)\cap \Vcal_I$ are $\fa_1^{\otimes 2},\ldots,\fa_{J-I}^{\otimes 2},\mathbf{e}_1^{\otimes 2},\ldots, \mathbf{e}_I^{\otimes 2}$, by Theorem~\ref{thm: iff for identifiablity}.
The span of $\mathbf{e}_1^{\otimes 2},\ldots, \mathbf{e}_I^{\otimes 2}$ is the diagonal matrices. Hence, $\fb^{\otimes 2}$ lies in $\Wcal(A)\cap \Vcal_I$ if and only if its off-diagonal part $\varphi(\fb)$ lies in $\Wcal(A)_\pi \cap \Zcal_I$.
\end{proof}

\begin{lemma}\label{lemma:degree projective veronese}
The projected second Veronese variety $\Zcal_I$ is a toric variety of dimension $I-1$ and degree $2^{I-1}-I$.
\end{lemma}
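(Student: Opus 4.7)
The plan is to exhibit $\Zcal_I$ as the projective toric variety associated to a lattice polytope, then read off the dimension and degree from the polytope.

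First, I would verify the toric structure and the dimension. Since $\pi\circ\varphi$ is a monomial parameterization, with coordinates given by the $\binom{I}{2}$ squarefree monomials $x_ix_j$ for $i<j$, the image closure $\Zcal_I$ is by definition a projective toric variety: the map restricts to a homomorphism on the dense torus $(\CC^\star)^I/\CC^\star$, and its image is dense in $\Zcal_I$. To see that the dimension is $I-1$, it suffices to verify that the parameterization is generically injective, which follows because the identity $x_i^2 = (x_ix_j)(x_ix_k)/(x_jx_k)$ recovers each $x_i^2$ from the off-diagonal products, and hence $[x_1:\cdots:x_I]$ up to the global sign flip, which is trivial projectively.

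Next, I would identify the defining lattice polytope as the second hypersimplex $\Delta(2,I) = \mathrm{conv}\{\mathbf{e}_i+\mathbf{e}_j : 1\leq i<j\leq I\}$, sitting in the affine hyperplane $\sum_k y_k = 2$, which is a translate of a rank $I-1$ sublattice of $\mathbb{Z}^I$. By the standard dictionary between projective toric varieties and lattice polytopes, the degree of $\Zcal_I$ equals the normalized volume of $\Delta(2,I)$ in this sublattice. It is a classical result that the normalized volume of the hypersimplex $\Delta(k,n)$ equals the Eulerian number $A(n-1,k-1)$; specializing to $k=2$ and $n=I$ yields $2^{I-1}-I$.

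The main obstacle is justifying the degree computation cleanly; the hypersimplex volume formula is classical but nontrivial, so citing a reference is the most efficient route. As a cross-check, I would note that $\Zcal_I$ is the birational projection of $\Vcal_I$ (of degree $2^{I-1}$) from the coordinate subspace $L=\{z_{ij}=0 : i<j\}$, which meets $\Vcal_I$ in exactly the $I$ reduced points $\mathbf{e}_i^{\otimes 2}$; provided the projection is sufficiently well-behaved, the projection formula gives $\deg\Zcal_I = 2^{I-1} - I$ directly, matching the toric computation.
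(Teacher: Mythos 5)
Your proposal is correct, but it takes a genuinely different route from the paper. The paper computes the Hilbert polynomial of the coordinate ring of $\Zcal_I$ directly: it counts which monomials of degree $2\ell$ lie in the subalgebra generated by the products $x_ix_j$ (those with all exponents at most $\ell$) and reads off dimension $I-1$ and degree $2^{I-1}-I$ from the leading term $\frac{2^{I-1}-I}{(I-1)!}\ell^{I-1}$. Your argument instead identifies $\Zcal_I$ as the projective toric variety of the second hypersimplex $\Delta(2,I)$ and invokes the dictionary degree $=$ normalized volume, together with the classical fact that this volume is the Eulerian number $A(I-1,1)=2^{I-1}-I$; this is essentially the treatment of the second hypersimplex in Sturmfels' \emph{Gr\"obner Bases and Convex Polytopes}. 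The two computations are two faces of the same thing --- the paper's monomial count is exactly the Ehrhart-type lattice-point count of $\ell\,\Delta(2,I)$ whose leading coefficient is the volume --- but the paper's version is self-contained and elementary, while yours outsources the combinatorics to standard toric results. Two small points you should make explicit: your injectivity identity $x_i^2=(x_ix_j)(x_ix_k)/(x_jx_k)$ needs $I\geq 3$ (harmless, since the statement degenerates for $I=2$ and the paper only uses the lemma for larger $I$), and the degree-equals-volume dictionary requires that the differences of the exponent vectors $\mathbf{e}_i+\mathbf{e}_j$ generate the full rank-$(I-1)$ lattice $\{\mathbf{v}\in\mathbb{Z}^I:\sum_i v_i=0\}$; this holds because $(\mathbf{e}_i+\mathbf{e}_j)-(\mathbf{e}_i+\mathbf{e}_k)=\mathbf{e}_j-\mathbf{e}_k$, but it deserves a sentence. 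Your projection cross-check (degree $2^{I-1}$ of $\Vcal_I$ minus the $I$ diagonal points hit by the center) matches the paper's use of $\Zcal_I$ but, as you note, would need a justification that the projection is birational onto its image and that the center meets $\Vcal_I$ transversally, so it is best kept as a sanity check rather than the proof.
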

\begin{proof}
We use the Hilbert polynomial to compute the dimension and degree of $\Zcal_I$, see \cite[Section 1.7]{hartshorne2013algebraic}.
Let $h(\ell)$ be the dimension of degree $\ell$ polynomials in the coordinate ring $\CC[\Zcal_I]$. 
These are degree $2\ell$ polynomials obtained from products of $x_1 x_2$, \ldots, $x_{I-1} x_I$.
Thus, if $x_i^{\ell + 1}$ divides a monomial, it cannot be degree $2\ell$ in $\CC[\Zcal_I]$. 
A monomial $x_1^{a_1}x_2^{a_2}\cdots x_I^{a_I}$ is in $\CC[\Zcal_I]$ if and only if $a_1+\ldots+a_I=2\ell$ with $1\leq a_i \leq \ell$ for all $i$. Hence
$$ h(\ell)={2\ell+I-1\choose I-1}-I{I+\ell-1\choose I-1},
$$ 
a polynomial in $\ell$ with leading term $\frac{2^{I-1}-I}{(I-1)!}\ell^{I-1}$. Hence $\dim \Zcal_I = I-1$ and $\deg \Zcal_I = 2^{I-1}-I$.
\end{proof}

\subsection{Generic identifiability}\label{sec:generic identifiability}
In this section, we prove Theorem \ref{thm: real generic matrix result}.

\begin{lemma}\label{lemma: J good then Jprime good}
If a generic matrix in $\RR^{I \times J}$ is non-identifiable, then a generic matrix in $\RR^{I \times J^\prime}$ is non-identifiable for all $J^\prime > J$.
\end{lemma}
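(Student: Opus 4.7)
The plan is to apply Theorem~\ref{thm: iff for identifiablity} and exploit that the span $\Span\{\fa_1^{\otimes 2}, \ldots, \fa_J^{\otimes 2}\}$ only grows as columns are appended. By Theorem~\ref{thm: iff for identifiablity}, a matrix with no pair of collinear columns is non-identifiable precisely when there exists a real $\fb \in \RR^I$, not collinear to any $\fa_j$, with $\fb^{\otimes 2} \in \Span\{\fa_j^{\otimes 2}\}$. Since having no two columns collinear is a generic Zariski-open condition in either $\RR^{I \times J}$ or $\RR^{I \times J'}$, this is the operative obstruction, and all I need to lift is the existence of such a vector $\fb$.

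By hypothesis, the set $U \subseteq \RR^{I \times J}$ of non-identifiable matrices is a semi-algebraic set with non-empty interior. For each $A \in U$, Theorem~\ref{thm: iff for identifiablity} supplies a witness $\fb(A) \in \RR^I$ with $\fb(A)^{\otimes 2} \in \Span\{\fa_j^{\otimes 2}\}$ and $\fb(A)$ not collinear to any column of $A$. Given a matrix $A' \in \RR^{I \times J'}$, I would write $A' = (A \mid C)$ where $A \in \RR^{I \times J}$ comprises its first $J$ columns and $C \in \RR^{I \times (J'-J)}$ the remaining columns. Generic choice of $A'$ forces $A \in U$, and then the inclusion of spans gives $\fb(A)^{\otimes 2} \in \Span\{(\fa'_j)^{\otimes 2} : j = 1, \ldots, J'\}$ for free.

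It remains to check that (i) for generic $C$, no column of $C$ is collinear to $\fb(A)$, and (ii) no pair of columns of $A'$ is collinear. For fixed $A$, condition (i) fails only on a finite union of proper linear subspaces of $\RR^{I \times (J'-J)}$, hence on a measure-zero locus; condition (ii) is the complement of a proper algebraic subvariety in $\RR^{I \times J'}$. A Fubini-type argument over $U$ then produces an open set of $A' \in \RR^{I \times J'}$ satisfying both conditions. Applying Theorem~\ref{thm: iff for identifiablity} to such $A'$ (whose columns are pairwise non-collinear and whose symmetric-square span contains the real rank-one matrix $\fb(A)^{\otimes 2}$ with $\fb(A)$ not collinear to any column) concludes that $A'$ is non-identifiable.

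The hard part will be the selection of the witness $\fb(A)$, which is not canonically determined by $A$: the fiber of $\{(A, \fb) : \fb^{\otimes 2} \in \Span\{\fa_j^{\otimes 2}\}\}$ over a given $A$ may contain several points or a positive-dimensional subvariety. I expect to handle this either by invoking a semi-algebraic measurable selection on this graph, or more concretely by arguing locally: fix a particular pair $(A_0, \fb_0)$ from the hypothesis with $\fb_0$ not collinear to the columns of $A_0$, take a small Euclidean neighborhood of $A_0$ in which the witness set perturbs continuously (so a nearby witness exists), and then choose the extra columns of $C$ generically so they avoid the corresponding small neighborhood of $\RR \cdot \fb_0$ in $\RR^I$.
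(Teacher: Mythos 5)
Your argument is correct and is essentially the paper's own proof: the paper also writes a generic $A' \in \RR^{I\times J'}$ as having a generic (hence non-identifiable) first $J$-column block, takes the real witness $\fb^{\otimes 2}$ in the span of those columns' squares via Theorem~\ref{thm: iff for identifiablity}, notes the span only grows, and disposes of collinearity with the remaining columns "by genericity." Your extra care about selecting the witness $\fb(A)$ (Fubini over the block decomposition) is a finer treatment of a point the paper leaves implicit, but it is the same route.
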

\begin{proof}
Fix a generic matrix $A \in \RR^{I \times J^\prime}$. The submatrix consisting of the first $J$ columns of $A$ is a generic $I \times J$ matrix, hence is non-identifiable by assumption. 
So, the intersection $\Span \{ \mathbf{a}_j^{\otimes 2} : 1 \leq j \leq J \} \cap \Vcal_I$ contains a real point that is not collinear to any of $\{\mathbf{a}_j^{\otimes 2}: 1\leq j \leq J \}$, by Theorem~\ref{thm: iff for identifiablity}. This point is not collinear to any column of $A$, by genericity.
\end{proof}

\begin{proposition}\label{prop: I=0,1 mod 4, J=Ichoose 2+1 not iden}
For $I\equiv 0,1 \mod 4$ and $J={I \choose 2}+1$, a generic $A\in \mathbb{R}^{I\times J}$ is non-identifiable.
\end{proposition}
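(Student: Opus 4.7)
The plan is to work in the projected second Veronese setting of Proposition~\ref{prop:projected_veronese} and conclude non-identifiability from a parity argument on complex intersection points. After a change of basis, a generic $A \in \RR^{I \times J}$ may be assumed to have the form~\eqref{eqn:specialA}, with its last $I$ columns equal to $\mathbf{e}_1,\ldots,\mathbf{e}_I$. By Proposition~\ref{prop:projected_veronese}, non-identifiability of $A$ is then equivalent to the existence of a real point in $\Wcal(A)_\pi \cap \Zcal_I$ distinct from the $J-I = \binom{I-1}{2}$ points $\varphi(\mathbf{a}_1),\ldots,\varphi(\mathbf{a}_{J-I})$.

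The first step is to count the total number of complex intersection points. The variety $\Zcal_I$ has dimension $I-1$ and degree $2^{I-1}-I$ by Lemma~\ref{lemma:degree projective veronese}, while for generic $A$ the projective linear space $\Wcal(A)_\pi$ has dimension $\binom{I-1}{2}-1$ and hence codimension $I-1$ in $\PP_{\CC}^{\binom{I}{2}-1}$. B\'ezout's theorem together with a transversality argument then yields that $\Wcal(A)_\pi \cap \Zcal_I$ generically consists of exactly $2^{I-1}-I$ distinct points, of which $\binom{I-1}{2}$ are the known real points $\varphi(\mathbf{a}_j)$.

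Since both $\Wcal(A)_\pi$ and $\Zcal_I$ are defined over $\RR$, the non-real intersection points come in complex conjugate pairs. Removing the $\binom{I-1}{2}$ known real points leaves
$$ N := 2^{I-1}-I-\binom{I-1}{2} $$
points whose real count has the same parity as $N$. A direct calculation, using that $2^{I-1}$ is even for $I \geq 2$, shows that $N$ is odd if and only if $I + \binom{I-1}{2}$ is odd, which happens precisely when $I \equiv 0, 1 \pmod 4$ (for the other two residues $I+\binom{I-1}{2}$ is even). In these cases $N$ is odd, so at least one extra intersection point is real, and Proposition~\ref{prop:projected_veronese} yields non-identifiability of $A$.

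The main obstacle is justifying the transversality claim: for generic $A$, the intersection $\Wcal(A)_\pi \cap \Zcal_I$ consists of $2^{I-1}-I$ distinct simple points. I would prove this by mimicking Lemma~\ref{lemma: generic A.A intersects in 2 power I-1 points}: the irreducible non-degenerate variety $\Zcal_I$ meets a generic codimension $I-1$ linear subspace in $\deg \Zcal_I$ distinct transverse points by B\'ezout and iterated application of \cite[Proposition 18.10]{harris1992algebraic}, and subspaces of the form $\Wcal(A)_\pi = \Span\{\varphi(\mathbf{a}_1),\ldots,\varphi(\mathbf{a}_{J-I})\}$ fill a dense open subset of the Grassmannian $\Gr(\binom{I-1}{2}-1,\binom{I}{2}-1)$, since $\Zcal_I$ is non-degenerate and irreducible. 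A secondary technical point is checking that the $\varphi(\mathbf{a}_j)$ are themselves distinct and simple points of the intersection, which again follows from the genericity of $A$.
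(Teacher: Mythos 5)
Your proposal is correct, but it takes a different route from the paper's. The paper stays in the unprojected setting: by Lemma~\ref{lemma: generic A.A intersects in 2 power I-1 points}, for generic $A$ the intersection $\Vcal_I \cap \Wcal(A)$ already consists of $2^{I-1}$ distinct points; since the defining quadrics are real, non-real points come in conjugate pairs, so the number of real points is even, while the $J={I \choose 2}+1$ columns supply an odd number of real points when $I\equiv 0,1 \bmod 4$, forcing an extra real point and hence non-identifiability via Theorem~\ref{thm: iff for identifiablity}. You instead normalize $A$ to the form~\eqref{eqn:specialA}, pass to the projected Veronese via Proposition~\ref{prop:projected_veronese}, and run the same conjugation-parity argument on $\Wcal(A)_\pi\cap\Zcal_I$ using $\deg\Zcal_I=2^{I-1}-I$ from Lemma~\ref{lemma:degree projective veronese}; note your count $N=2^{I-1}-I-{I-1\choose 2}=2^{I-1}-J$ is the same number the paper's argument produces, and your mod~$4$ computation is right. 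The cost of your route is exactly the step you flag: you must prove an analogue of Lemma~\ref{lemma: generic A.A intersects in 2 power I-1 points} for $\Zcal_I$, namely that the special planes $\Wcal(A)_\pi$ spanned by ${I-1\choose 2}$ points of $\Zcal_I$ still meet $\Zcal_I$ transversely in $2^{I-1}-I$ distinct points; your sketch (iterated general hyperplane sections of the irreducible non-degenerate $\Zcal_I$, density of such spans in $\Gr\bigl({I-1\choose 2}-1,{I\choose 2}-1\bigr)$, and noting a generic codimension-$(I-1)$ plane misses the boundary of the torus part, so its intersection points span it) is the right argument and is of the same kind the paper invokes in Proposition~\ref{prop: I=3mod4 J>Ichoose2+1}. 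The paper's choice avoids this extra lemma because Lemma~\ref{lemma: generic A.A intersects in 2 power I-1 points} is already proved; your choice buys uniformity with the projected-Veronese machinery the paper uses for the $I\equiv 2,3\bmod 4$ cases.
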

\begin{proof}
The intersection $\mathcal{V}_I \cap \Wcal(A)$ consists of $2^{I-1}$ distinct points, by Lemma \ref{lemma: generic A.A intersects in 2 power I-1 points}. 
Complex intersection points come in pairs, since $\Vcal_I\cap \Wcal(A)$ is the vanishing locus of quadrics with real coefficients. 
So there is an even number of real points in $\Vcal_I\cap \Wcal(A)$. 
There are $J$ real points, which correspond to the columns of $A$. If $I\equiv 0,1 \mod 4$, then $J = {I \choose 2}+1$ is odd. Hence there is an extra real solution, so $A$ is not identifiable, by Theorem \ref{thm: iff for identifiablity}.
\end{proof}

\noindent Combining the above with Lemma \ref{lemma: J good then Jprime good} gives the following.

\begin{corollary}\label{cor: I equals 0,1 mod 4 non-identifiable}
When $I\equiv 0,1 \mod 4$ and $J\geq{I \choose 2}+1$, a generic $A\in \mathbb{R}^{I\times J}$ is non-identifiable.
\end{corollary}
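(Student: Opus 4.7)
The plan is essentially bookkeeping: the corollary is an immediate combination of the base case established in Proposition~\ref{prop: I=0,1 mod 4, J=Ichoose 2+1 not iden} with the monotonicity statement of Lemma~\ref{lemma: J good then Jprime good}. First, for $J = {I \choose 2} + 1$ the conclusion is exactly Proposition~\ref{prop: I=0,1 mod 4, J=Ichoose 2+1 not iden}: under $I \equiv 0, 1 \mod 4$ this value of $J$ is odd, while the $2^{I-1}$ points of $\Vcal_I \cap \Wcal(A)$ split into real points and complex-conjugate pairs, forcing at least one real point beyond the $J$ columns of $A$, and non-identifiability then follows from Theorem~\ref{thm: iff for identifiablity}.

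For $J' > {I \choose 2} + 1$ one simply invokes Lemma~\ref{lemma: J good then Jprime good} with $J = {I \choose 2} + 1$: a generic $A \in \RR^{I \times J'}$ has a generic $I \times J$ submatrix among its first $J$ columns, whose Veronese span contains an extra real rank one matrix $\mathbf{b}^{\otimes 2}$ by the base case; this $\mathbf{b}^{\otimes 2}$ remains in the span of all $J'$ Veronese squares, and by genericity of the added columns $\mathbf{b}$ stays non-collinear to every $\fa_j$, so Theorem~\ref{thm: iff for identifiablity} again gives non-identifiability. Since both ingredients are already proved, there is no genuine obstacle; the only point to check is that the rank one witness $\mathbf{b}$ from the base case really does remain non-collinear to the new columns, which follows because collinearity is a codimension-one condition and the new columns are drawn generically.
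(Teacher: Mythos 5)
Your proposal is correct and matches the paper's proof, which likewise obtains the corollary by combining Proposition~\ref{prop: I=0,1 mod 4, J=Ichoose 2+1 not iden} for the base case $J={I\choose 2}+1$ with Lemma~\ref{lemma: J good then Jprime good} for all larger $J$. The additional detail you supply about the witness $\mathbf{b}$ remaining non-collinear to the generically added columns is exactly the content of the paper's proof of that lemma, so nothing further is needed.
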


Recall the map from Definition~\ref{definition: projected Veronese}, with $\varphi(x_1,\ldots,x_I)=(x_1x_2,\ldots,x_{I-1}x_I)$. 
We study generic $A \in \RR^{I \times J}$ via the images under $\varphi$ of $J - I$ generic vectors, by Proposition~\ref{prop:projected_veronese}.

\begin{lemma}\label{lem: odd degree positive dimension infinitely many real}
Let $\ideal$ be a homogeneous ideal generated by polynomials with real coefficients and $X$ the vanishing locus of $\ideal$. 
If $X$ has odd degree, then it contains a real point. If moreover $\dim X\geq 1$, then $X$ contains infinitely many real points.
\end{lemma}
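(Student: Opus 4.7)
My plan is to use a real linear section argument combined with the action of complex conjugation on $X$, which is well-defined since $\ideal$ has real coefficients. Say $X$ lies in an ambient $\PP^{N-1}_\CC$.

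For the first part, I would choose a real linear subspace $L \subseteq \PP^{N-1}$ of codimension $\dim X$ that is sufficiently generic that $L \cap X$ is zero-dimensional of cardinality $\deg X$. Such an $L$ exists because the transversality and reducedness conditions cut out a nonempty Zariski open subset of the Grassmannian, and the real Grassmannian is Zariski dense in the complex one, so it meets this open subset. Complex conjugation acts on the finite set $L \cap X$ and pairs non-real points; since $\deg X$ is odd, at least one point is fixed under conjugation and is therefore real, giving a real point of $X$.

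For the second part, I would argue by contradiction. Suppose $X$ has only finitely many real points $p_1, \ldots, p_r$. Choose a real hyperplane $H$ that avoids every $p_i$ and is generic enough that $X \cap H$ has dimension $\dim X - 1 \geq 0$ and degree $\deg X$ (still odd). Both requirements are open dense conditions on the real projective dual space (avoidance is the complement of $r$ hyperplanes there, and preservation of dimension/degree is Zariski open), so a valid $H$ exists. Then $X \cap H$ is itself cut out by a homogeneous real ideal of odd degree, so the first part produces a real point of $X \cap H \subseteq X$ lying in $H$. This point differs from every $p_i$ and contradicts the assumed exhaustive list.

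The main obstacle I foresee is bookkeeping the simultaneous genericity conditions over $\RR$: finding a real linear subspace (or hyperplane) that realizes several Zariski open conditions from the complex theory and also satisfies the extra finite real constraints. This reduces to the standard fact that finitely many open dense conditions on a real algebraic variety admit a common real point. A minor subtlety is that $X$ need not be equidimensional; here ``degree'' is the Hilbert polynomial leading coefficient, i.e.\ the degree of the top-dimensional part of $X$, and the argument produces real points of that part, which are in particular real points of $X$.
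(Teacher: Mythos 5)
Your proposal is correct and follows essentially the same route as the paper: both use that complex conjugation pairs the non-real points of a real linear section, so odd degree forces a real point, and both obtain infinitely many real points by taking a real generic section avoiding any putative finite list. The only cosmetic difference is that you slice by one hyperplane and reapply the first part, whereas the paper directly takes a codimension-$\dim X$ real linear space missing the finitely many real points.
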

\begin{proof}
The ideal $\ideal$ is generated by polynomials with real coefficients, so complex solutions come in pairs. 
If $\dim X=0$, then $X$ contains a real point. If $d = \dim X \geq 1$, a generic real linear space of codimension $d$ intersects $X$ to give an odd number of points, so $X$ contains a real point. 
Assume for contradiction that $X$ contains only finitely many real points. There is a generic real codimension $d$ linear space that does not pass through these points, but that intersects $X$ in degree many points. This intersection contributes a new real point, a contradiction.
\end{proof}

\begin{proposition}\label{prop: I=3mod4 J>Ichoose2+1}
If $I\equiv 2,3 \mod 4$ and $J>{I \choose 2}+1$, a generic $A\in \mathbb{R}^{I\times J}$ is non-identifiable.
\end{proposition}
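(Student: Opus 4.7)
The plan is to use the projected second Veronese $\Zcal_I$ (Definition~\ref{definition: projected Veronese}) and show that for generic $A$, the intersection $C := \Zcal_I \cap \Wcal(A)_\pi$ is a curve with infinitely many real points near $\varphi(\fa_1)$; these lift to extra real rank one witnesses to non-identifiability via Proposition~\ref{prop:projected_veronese}. By Lemma~\ref{lemma: J good then Jprime good} it suffices to treat $J = \binom{I}{2}+2$, and after a change of basis we may assume $A$ has the form~\eqref{eqn:specialA}. Lemma~\ref{lemma:geenric full rank} gives $\dim \Wcal(A)_\pi = J-I-1 = \binom{I}{2}+1-I$ in $\PP_\CC^{\binom{I}{2}-1}$, and $\dim\Zcal_I = I-1$ by Lemma~\ref{lemma:degree projective veronese}, so $C$ has the expected dimension $1$ generically.

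When $I \equiv 3 \bmod 4$, the degree $\deg\Zcal_I = 2^{I-1}-I$ is odd, so by B\'ezout the curve $C$ has odd degree and is defined over $\RR$. Lemma~\ref{lem: odd degree positive dimension infinitely many real} produces infinitely many real points on $C$, and genericity of $A$ forces at least one to differ from the $J-I$ known real points $\varphi(\fa_1),\ldots,\varphi(\fa_{J-I})$. When $I \equiv 2 \bmod 4$, this degree is even, so I would instead show that $C$ is smooth at the real point $\varphi(\fa_1)$ for generic $A$. The toric variety $\Zcal_I$ is smooth at $\varphi(\fa_1)$ whenever $\fa_1$ has no zero entries, and smoothness of $C$ at $\varphi(\fa_1)$ reduces to the transversality $T_{\varphi(\fa_1)}\Zcal_I + T_{\varphi(\fa_1)}\Wcal(A)_\pi = T_{\varphi(\fa_1)}\PP_\CC^{\binom{I}{2}-1}$. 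The tangent dimensions sum to $(I-1)+(J-I-1) = \binom{I}{2}$, exceeding the ambient $\binom{I}{2}-1$, so there is room for transversality, and a parameter count varying $\fa_2,\ldots,\fa_{J-I}$ with $\fa_1$ fixed shows this holds on a Zariski-dense open subset of $\RR^{I\times(J-I)}$. Since $C$ is cut out by real polynomials, the real implicit function theorem gives that $C(\RR)$ is a smooth one-dimensional real manifold near $\varphi(\fa_1)$, hence contains infinitely many real points. In either case, an extra real point on $C$ lifts via the generically injective real projective map $[\fb]\mapsto[\varphi(\fb)]$ to a real vector $\fb$ close to $\fa_1$; for sufficiently small perturbation, $\fb$ is not collinear to any column of $A$, so Proposition~\ref{prop:projected_veronese} yields non-identifiability.

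The main obstacle is the transversality step for $I \equiv 2 \bmod 4$: $\Wcal(A)_\pi$ is constrained to pass through $\varphi(\fa_1)$, so standard Bertini only guarantees smoothness of a generic linear section away from the base locus. I would handle smoothness at $\varphi(\fa_1)$ itself by verifying surjectivity of $T_{\varphi(\fa_1)}\Wcal(A)_\pi$ onto the normal space $N_{\Zcal_I/\PP,\varphi(\fa_1)}$ of dimension $\binom{I}{2}-I$, which is one less than $\dim\Wcal(A)_\pi$. Concretely, I would exhibit one explicit choice of $\fa_2,\ldots,\fa_{J-I}$ for which the directions $\varphi(\fa_j)-\varphi(\fa_1)$ surject onto this normal space; existence of such an example forces the transversality locus to be a non-empty Zariski-open, hence dense, subset of $\RR^{I\times(J-I)}$.
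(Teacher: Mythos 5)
Your treatment of the case $I\equiv 3 \bmod 4$ is the same as the paper's: both compute $\dim(\Zcal_I\cap\Wcal(A)_\pi)\geq 1$ and $\deg \Zcal_I = 2^{I-1}-I$ (odd for $I$ odd), and invoke Lemma~\ref{lem: odd degree positive dimension infinitely many real} to get infinitely many real points, hence an extra real witness via Proposition~\ref{prop:projected_veronese}. For $I\equiv 2\bmod 4$ you take a genuinely different route: local analysis at the known real point $\varphi(\fa_1)$, i.e.\ transversality of $\Wcal(A)_\pi$ with $\Zcal_I$ at $\varphi(\fa_1)$ plus the real implicit function theorem to produce a one-dimensional real branch of the curve. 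That route is viable in principle (smoothness at a real point of a variety cut out by real equations does give a real manifold of the complex dimension locally), but as written it has a gap: the generic transversality at $\varphi(\fa_1)$ is exactly the crux, and you only promise to ``exhibit one explicit choice'' of $\fa_2,\ldots,\fa_{J-I}$ whose chord directions surject onto the normal space, without producing it or giving a general argument. The constraint that $\Wcal(A)_\pi$ is spanned by points \emph{on} $\Zcal_I$ through $\varphi(\fa_1)$ means Bertini-type genericity does not apply off the shelf, so this step needs real content, e.g.\ a spanning argument in the spirit of Lemma~\ref{lemma: generic A.A intersects in 2 power I-1 points}: since $\Zcal_I$ is non-degenerate, the chord directions $\varphi(\fb)-\varphi(\fa_1)$ for $\fb$ ranging over $\Zcal_I$ span the ambient space, so one can choose the points inductively to surject onto the $\binom{I}{2}-I$-dimensional normal space, and then openness gives genericity. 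You would also want a sentence justifying smoothness of $\Zcal_I$ at $\varphi(\fa_1)$ (injectivity of $\varphi$ and its differential on vectors with nonzero coordinates), which you assert but do not prove.

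For comparison, the paper avoids any smoothness or transversality analysis in the $I\equiv 2\bmod 4$ case by a parity trick: after reducing to $J=\binom{I}{2}+2$ (as you do, via Lemma~\ref{lemma: J good then Jprime good}), it intersects with the hyperplane $x_I=0$, which replaces $\Zcal_I$ by the projected Veronese in $I-1$ variables of degree $2^{I-2}-(I-1)$. For $I\equiv 2\bmod 4$ this degree is odd, so the conjugation-pairing argument of Lemma~\ref{lem: odd degree positive dimension infinitely many real} applies directly to the zero-dimensional slice and yields a real point $\varphi(y_1,\ldots,y_{I-1},0)$ not collinear to any column of $A$. That argument is shorter and stays entirely within the parity framework already set up; your approach, once the transversality step is actually carried out, would give the stronger local statement that the real locus is a curve through $\varphi(\fa_1)$, but in its current form the proof is incomplete at precisely that step.
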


\begin{proof}
Let $X:= \Wcal(A)_\pi \cap \Zcal_I$. The matrix of first $J-I$ columns of $A$ is generic, so $\Wcal(A)_\pi$ has projective dimension $J-I-1$. Hence $\dim X = (I-1)+(J-I-1)-({I\choose 2}-1)>0$ and $\deg X = 2^{I-1}-I$, 
by similar arguments as Lemma \ref{lemma: generic A.A intersects in 2 power I-1 points}.

When $I \equiv 3 \mod 4$, the degree of $X$ is odd. Hence $X$ contains infinitely many real points, by Lemma \ref{lem: odd degree positive dimension infinitely many real}.
It remains to consider $I \equiv 2 \mod 4$. If $J={I\choose 2}+2$,
we consider the system of quadrics in $I-1$ variables obtained by setting $x_I = 0$. Denote the quadrics by $g_1, \ldots, g_\ell$, where $\ell$ is the codimension of $\Wcal(A)_\pi$. 
Its vanishing locus consists of $2^{I-2}-(I-1)$ points, by Lemma \ref{lemma:degree projective veronese} and similar arguments to Lemma \ref{lemma: generic A.A intersects in 2 power I-1 points}. 
Since $2^{I-2}-(I-1)$ is odd, 
there is a real point $\varphi(y_1, \ldots, y_{I-1})$ in the intersection,
by Lemma \ref{lem: odd degree positive dimension infinitely many real}. 
Hence 
$\varphi(y_1 , \ldots, y_{I-1} , 0) \in \Wcal(A)_\pi\cap \Zcal_I$. This point is not collinear to any column of $A$, by genericity. The case $J>{I\choose 2}+2$ follows from Lemma \ref{lemma: J good then Jprime good}.
\end{proof}

The cases remaining are $I\equiv 2,3 \mod 4$, $I\geq 4$ and $J={I\choose 2}+1$.
The following result follows from Theorem \ref{thm: complex identifiable}. We will use it to prove these remaining cases.

\begin{corollary}\label{cor: generic linear space and generic A}
Let $m = {{I + 1 \choose 2}}$. For a generic linear space $\Wcal\subseteq \PP_{\mathbb{C}}^{m-1}$ of dimension $I\choose 2$,
any ${I\choose 2}+1$ points in the intersection $\Wcal \cap \Vcal_I$ are linearly independent as affine vectors.
\end{corollary}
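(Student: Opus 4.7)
The plan is to argue by contradiction via a dimension count on an incidence variety. If some $J=\binom{I}{2}+1$ distinct points $\{P_1,\ldots,P_{J-1},Q\}$ of $\Vcal_I\cap\Wcal$ are linearly dependent, then after relabeling $Q$ lies in the projective span $\Span(P_1,\ldots,P_{J-1})$. I would show that the ``bad'' locus of $\Wcal$ for which such a configuration exists is a proper closed subset of $\Gr(J-1,m-1)$. The cases $I\leq 3$ are immediate: Lemma~\ref{lemma: generic A.A intersects in 2 power I-1 points} gives $|\Vcal_I\cap\Wcal|=2^{I-1}=J$, and the intersection then coincides with the preimage of $\Wcal$ under the dominant map $\Phi$ of that lemma, so the $J$ points are linearly independent by Lemma~\ref{lemma:geenric full rank}. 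Assume $I\geq 4$ below.

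Introduce the incidence variety
\[
\mathcal{J}':=\bigl\{(P_1,\ldots,P_{J-1},Q,\Wcal)\in(\Vcal_I)^J\times\Gr(J-1,m-1)\,:\, P_i,Q\in\Vcal_I\cap\Wcal\text{ distinct},\ Q\in\Span(P_1,\ldots,P_{J-1})\bigr\},
\]
whose projection to $\Gr(J-1,m-1)$ equals the bad locus. The critical input is Theorem~\ref{thm:quadrics}, case~1: applied with $J-1=\binom{I}{2}$ points, it says $\Vcal_I\cap\Span(P_1,\ldots,P_{J-1})=\{P_1,\ldots,P_{J-1}\}$ for a generic tuple $(P_1,\ldots,P_{J-1})\in(\Vcal_I)^{J-1}$, so the locus $\mathcal{B}\subset(\Vcal_I)^{J-1}$ where this fails is a proper closed subset of the irreducible variety $(\Vcal_I)^{J-1}$, and $\dim\mathcal{B}\leq(J-1)(I-1)-1$. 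On the open substratum of $\mathcal{B}$ where the $P_i$ are linearly independent, $Q$ ranges over a finite set (the extra intersection points), and for each $(P_i,Q)$ the $\Wcal$'s containing the $(J-2)$-dimensional projective span of the $P_i$'s form a space of dimension $m-J=I-1$. Summing yields
\[
\dim\mathcal{J}'\leq\bigl((J-1)(I-1)-1\bigr)+(I-1)=J(I-1)-1<J(I-1)=\dim\Gr(J-1,m-1),
\]
so the bad locus lies in a proper closed subset of the Grassmannian and a generic $\Wcal$ avoids it.

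The main technical point is to confirm that the substrata of $\mathcal{B}$ on which the $P_i$ are linearly dependent --- where $\Span(P_i)$ has projective dimension $d<J-2$ and the fiber of $\Wcal$'s inflates to $(J-d-1)(I-1)$ --- do not overshoot the dimension bound. In each such stratum, applying Theorem~\ref{thm:quadrics}, case~1 to a maximal linearly independent subtuple of the $P_i$'s forces the additional intersection points to coincide with the original ones, which contradicts distinctness unless further codimension is imposed on the base; this compensating drop in base dimension balances the growth in the fiber, so the overall estimate $\dim\mathcal{J}'\leq J(I-1)-1$ survives. The balancing works precisely because $J-1=\binom{I}{2}$ is the borderline value where Theorem~\ref{thm:quadrics}, case~1 is tight, which is also where the hypothesis $I\geq 4$ enters.
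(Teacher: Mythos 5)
Your strategy is essentially the paper's: both arguments are dimension counts stratified by the dimension $d$ of the span $\Wcal'$ of the degenerate configuration, with Theorem~\ref{thm:quadrics}(1) (equivalently, generic complex identifiability of at most ${I\choose 2}$ points) supplying the crucial codimension one on the base. The paper packages this by bounding the image of a map sending $\ell=d+1$ points of a non-identifiable configuration plus ${I\choose 2}+1-\ell$ further Veronese points to their span, which requires first establishing that a generic $\Wcal$ is spanned by $\Wcal\cap\Vcal_I$; you use the Schubert cycle $\{\Wcal:\Wcal\supseteq\Wcal'\}$ instead, which sidesteps that preliminary step. The balancing you assert but do not verify in your last paragraph does close: a maximal independent subtuple of $d+1\leq{I\choose 2}$ of the $P_i$ must lie in the non-generic locus of $(\Vcal_I)^{d+1}$, so the attainable $\Wcal'$ form a family of dimension at most $(d+1)(I-1)-1$, and adding the Schubert fiber $(J-d-1)(I-1)$ gives at most $J(I-1)-1$ for every $d$.

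One genuine flaw: the bound $\dim\mathcal{J}'\leq J(I-1)-1$ is not correct as justified. The claim that ``$Q$ ranges over a finite set'' can fail, since $\Vcal_I\cap\Wcal'$ may be positive dimensional on special loci (for instance when $\Wcal'$ contains the image of a line under the Veronese map), and then both $Q$ and the dependent $P_i$ contribute positive-dimensional fibers that can push $\dim\mathcal{J}'$ above $J(I-1)$. What rescues the argument is that you only need the image in $\Gr(J-1,m-1)$, and the $\Wcal$-coordinate of a point of $\mathcal{J}'$ is constrained only through $\Wcal\supseteq\Wcal'$: so you should bound the dimension of the locus of attainable $\Wcal'$ (at most $(d+1)(I-1)-1$, as above) plus the Schubert fiber, rather than $\dim\mathcal{J}'$ itself. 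A small further quibble: $I\geq 4$ is not what makes the balancing work (Theorem~\ref{thm:quadrics}(1) applies to the maximal independent subtuple for all $I$); it only governs whether extra intersection points exist at all. With these corrections the proof goes through.
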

\begin{proof}
Fix a generic linear space $\Wcal$ of dimension $I\choose 2$. It intersects $\Vcal_I$ in $2^{I-1}$ distinct points. The intersection points span $\Wcal$, 
by~\cite[Proposition 18.10]{harris1992algebraic} applied $I-1$ times.

Assume for contradiction that there is a set $S$ of ${I\choose 2}+1$ points in $\Wcal \cap \Vcal_I$ that are linearly dependent. 
They span a linear space $\Wcal^\prime$ of dimension $\ell <{I\choose 2} + 1$. 
Choose a subset of $S$ of size $\ell$ that spans $\Wcal^\prime$.
These points define a matrix $A \in \CC^{I \times \ell}$ that is not complex identifiable, by Proposition~\ref{prop: complex identifiable}.

Let $S_{\ell}$ be the sets of $\ell$ linearly independent vectors in $\PP_\CC^{I-1}$ whose corresponding matrices in $\CC^{I \times \ell}$ are complex non-identifiable. 
Complex identifiability holds generically, by Theorem \ref{thm: complex identifiable}, since $\ell<{I\choose 2}+1$.
Hence $\dim S_{\ell} < \dim ((\PP_\CC^{I-1})^{\ell})=\ell(I-1)$.

Define the continuous map $\phi_{\ell}$ that sends a collection of ${I\choose 2}+1$ vectors, the first $\ell$ of which are in $S_{\ell}$, to the linear space their second outer products span. 
Then $\Wcal$ is in the image of $\phi_{\ell}$, 
since it is spanned by the $\ell$ points spanning $\Wcal'$ plus ${I\choose 2}+1-\ell$ other points. 
The dimension of $\operatorname{im} \phi_{\ell}$ is at most $\dim S_{\ell}+(I-1)({I\choose 2}+1-\ell)<(I-1)({I\choose 2}+1)$. But the space of ${I \choose 2}$ dimensional spaces in $\PP_\CC^{m-1}$ has dimension $(I-1)({I\choose 2}+1)$, by~\cite[Lecture 6]{harris1992algebraic}, a contradiction.
\end{proof}

\noindent Corollary \ref{cor: generic linear space and generic A} is still true for a generic real linear space, since a generic real linear space of dimension $n$ is a generic complex linear space of dimension $n$.

\begin{proposition}\label{prop: id and non id positive probability}
Let $I\equiv 2,3 \mod 4$, $I\geq 4$ and $J={I\choose 2}+1$. 
For matrices in $\mathbb{R}^{I\times J}$,
identifiability and non-identifiability both occur with positive probability.
\end{proposition}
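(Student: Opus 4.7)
The plan is to exhibit explicit matrices $A_{\mathrm{id}}$ (identifiable) and $A_{\mathrm{non}}$ (non-identifiable) in $\RR^{I\times J}$ and then use continuity to show that each lies in a Euclidean open set of matrices with the same identifiability status. The key input is Theorem~\ref{thm: quadrics intersect even integer positive probability}, which provides systems of $I-1$ real quadrics with any prescribed even number of real roots.

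First, set up the parity count. Since $I \equiv 2,3 \pmod 4$, one checks that $\binom{I}{2}$ is odd, so $J = \binom{I}{2}+1$ is even and $2^{I-1}-J$ is even (also $J \leq 2^{I-1}$ since $I\geq 4$). By Lemma~\ref{lemma: generic A.A intersects in 2 power I-1 points}, for generic $A$ the intersection $\Vcal_I\cap\Wcal(A)$ consists of $2^{I-1}$ distinct complex points; since the defining quadrics have real coefficients, non-real points come in conjugate pairs, so the number of real points is even. The $J$ columns of $A$ account for $J$ real points, and by Theorem~\ref{thm: iff for identifiablity}, identifiability is equivalent to having no additional real points in this intersection.

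For $A_{\mathrm{id}}$, apply Theorem~\ref{thm: quadrics intersect even integer positive probability} with $\ell = J$: there is an open set of systems of $I-1$ real quadrics in $\RR[x_1,\ldots,x_I]$ with $2^{I-1}$ distinct projective solutions, exactly $J$ of which are real. Each such system defines, via the identification $z_{ij} \leftrightarrow x_ix_j$, a real projective linear subspace $\Wcal \subset \PP_\RR^{m-1}$ of codimension $I-1$, where $m = \binom{I+1}{2}$. By the real analog of Corollary~\ref{cor: generic linear space and generic A} noted at the end of Section~\ref{sec:complex_id}, a Zariski-open subset of these systems has the property that any $J$ points of $\Wcal \cap \Vcal_I$ span $\Wcal$; choose a system in this subset, let $[\mathbf{c}_1],\ldots,[\mathbf{c}_J]$ be its $J$ real solutions in $\PP_\RR^{I-1}$, and define $A_{\mathrm{id}}$ with columns $\mathbf{c}_j$. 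Then $\Wcal(A_{\mathrm{id}}) = \Wcal$, the columns are pairwise non-collinear, and the only real points of $\Wcal(A_{\mathrm{id}}) \cap \Vcal_I$ are $\mathbf{c}_j^{\otimes 2}$, so Theorem~\ref{thm: iff for identifiablity} gives identifiability. The construction of $A_{\mathrm{non}}$ is analogous with $\ell = 2^{I-1}$: all $2^{I-1}$ solutions are real, any $J$ of them (again spanning $\Wcal$) serve as columns, and the remaining $2^{I-1}-J > 0$ real points are extras witnessing non-identifiability.

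Finally, the map $A \mapsto$ (coefficients of the $I-1$ quadrics defining $\Vcal_I\cap\Wcal(A)$, up to change of basis) is polynomial and continuous, and for matrices at which the quadric system has $2^{I-1}$ distinct complex roots the number of real roots is locally constant under small real perturbations. Hence a Euclidean neighborhood of $A_{\mathrm{id}}$ consists of matrices with exactly $J$ real intersection points (necessarily the columns, which are always real solutions), hence identifiable, while a neighborhood of $A_{\mathrm{non}}$ consists of matrices with $2^{I-1}$ real intersection points, of which $2^{I-1}-J>0$ are extras, hence non-identifiable. Both neighborhoods have positive Lebesgue measure. The main obstacle is the bookkeeping step of ensuring that the Euclidean open set of quadric systems from Theorem~\ref{thm: quadrics intersect even integer positive probability} meets the Zariski-open set where Corollary~\ref{cor: generic linear space and generic A}'s spanning property holds, so that a single system can be realized as the quadric system of a real matrix with the required column structure; this is resolved because the spanning condition's complement is a proper Zariski-closed set and therefore cannot contain a non-empty Euclidean open set.
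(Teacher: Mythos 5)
Your proof is correct and follows essentially the same route as the paper: both arguments feed Theorem~\ref{thm: quadrics intersect even integer positive probability} into the continuous map from matrices to their defining quadric systems, use Corollary~\ref{cor: generic linear space and generic A} (in its real form) to realize a system with the prescribed number of real solutions as $\Wcal(A)$ for an actual real matrix, and conclude by local constancy of the real root count. The only cosmetic difference is that you take $\ell=2^{I-1}$ for the non-identifiable witness where the paper takes $\ell=J+2$; both are valid since $J$ is even and $J+2\le 2^{I-1}$ in this range.
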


\begin{proof}[Proof of of Proposition \ref{prop: id and non id positive probability}]
We construct non-empty open sets of identifiable and non-identifiable matrices.
More specifically, we find open sets $U_1$ and $U_2$ in $\RR^{I \times J}$ such that for each matrix in $U_1$, the corresponding system of quadrics has $J$ real solutions, and the system of quadrics for $U_2$ has $J+2$ real solutions.
To find $U_1$ and $U_2$, we construct a continuous map from matrices to quadric systems and use Theorem \ref{thm: quadrics intersect even integer positive probability}.

We construct a map that sends a matrix $A \in \RR^{I \times J}$ to the system of quadrics that define $\Wcal(A) \cap \Vcal_I$. 
The map is continuous on the dense set of full rank matrices $A$ with $\dim \Wcal(A) = J-1$ and such that $\Wcal(A)$ intersects $\Vcal_I$ generically transversely. 
There is a continuous function that sends a projective $(J-1)$-dimensional linear space to a choice of $I-1$ linear relations defining it, e.g. using the orthogonal complement. We compose it with the map that sends the linear relation $\sum \lambda_{ij} z_{ij}$ to the quadric $\sum \lambda_{ij} x_i x_j$.
Finally, we pre-compose it with the continuous map that sends 
$A \in \RR^{I \times J}$ to $\Wcal(A)$.
Call the resulting map $\psi$.

There exist open sets $\Ucal_1$ (respectively $\Ucal_2$) in $(\PP_\RR^{m-1})^{I-1}$ such that the $I-1$ quadrics intersect in $2^{I-1}$ distinct points with $J$ (respectively $J+2$) of them real, by Theorem \ref{thm: quadrics intersect even integer positive probability}.
Among these real solutions, $J$ will generically be linearly independent, by Corollary \ref{cor: generic linear space and generic A}. 
Hence there exists $A \in \RR^{I \times J}$ in the preimage of $\psi$.
Set $U_i = \psi^{-1}(\Ucal_i)$ for $i = 1, 2$.
\end{proof}

\begin{proof}[Proof of Theorem \ref{thm: real generic matrix result}]
Corollary \ref{cor: first result from complex to real} gives the first part. 
Proposition \ref{prop: id and non id positive probability} gives the second part.
The third part follows from Corollary \ref{cor: I equals 0,1 mod 4 non-identifiable} and Proposition \ref{prop: I=3mod4 J>Ichoose2+1}.
\end{proof}

\section{Identifiable and non-identifiable matrices}\label{section: special matrices}

In this section we study special matrices: non-identifiable matrices in the range of $(I,J)$ where identifiability generically holds, and 
identifiable matrices in the range of $(I,J)$ where non-identifiability generically holds. We focus mostly on complex identifiablity. 
It is an open problem to find real analogues of some of the results.
We comment throughout on implications for (real) identifiability. 
Recall that $A\in \CC^{I\times J}$, for $I \geq 4$, is generically complex identifiable if $J\leq {I\choose 2}$ and generically complex non-identifiable if  $J \geq {I\choose 2} + 1$, see Theorem~\ref{thm: complex identifiable}. We study large identifiable matrices in Section~\ref{sec:large}, low-rank identifiable matrices in Section~\ref{sec:lowrank}, and non-identifiable special matrices in Section~\ref{sec:nonid_special}.

\subsection{Large identifiable matrices}
\label{sec:large}

In this section we prove the following.

\begin{theorem}\label{thm: when compelx identifiable exists for J>Ichoose 2}
There exist complex identifiable matrices of size $I\times J$ if and only if $J\leq 2^{I-1}$.
\end{theorem}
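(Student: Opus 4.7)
The plan is to handle necessity and sufficiency separately. Necessity follows from a B\'ezout bound, while sufficiency requires an explicit construction in three regimes of $J$.

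For necessity, suppose $A \in \CC^{I \times J}$ is complex identifiable. By Proposition~\ref{prop: complex identifiable}, $\Vcal_I \cap \Wcal(A) = \{\fa_1^{\otimes 2}, \ldots, \fa_J^{\otimes 2}\}$ is a finite set of $J$ distinct points. Finiteness of this intersection forces $\dim \Wcal(A) \leq \binom{I}{2}$, since otherwise Krull's principal ideal theorem would give a positive-dimensional intersection with $\Vcal_I$. I would then extend $\Wcal(A)$ to a generic projective linear space $L$ of dimension $\binom{I}{2}$ (complementary to $\Vcal_I$), so that $L \cap \Vcal_I$ is still zero-dimensional and $L \cap \Vcal_I \supseteq \Wcal(A) \cap \Vcal_I$. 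B\'ezout with $\deg \Vcal_I = 2^{I-1}$ then yields $J = |\Vcal_I \cap \Wcal(A)| \leq |\Vcal_I \cap L| \leq 2^{I-1}$.

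For sufficiency, the plan is to construct, for each $J \leq 2^{I-1}$, a linear space $\Wcal \subseteq \PP_\CC^{m-1}$ whose intersection with $\Vcal_I$ consists of exactly $J$ distinct points spanning $\Wcal$, and take those points as the columns of the matrix; the corresponding $A$ is complex identifiable by Proposition~\ref{prop: complex identifiable}. For $J \leq \binom{I}{2}$, a generic $A$ already works, by Theorem~\ref{thm: complex identifiable}. For $J = 2^{I-1}$, a generic codimension-$(I-1)$ linear space $\Wcal$ works: by Lemma~\ref{lemma: generic A.A intersects in 2 power I-1 points} its intersection with $\Vcal_I$ is $2^{I-1}$ distinct points, and these span $\Wcal$ by iterated application of~\cite[Proposition 18.10]{harris1992algebraic} to the non-degenerate irreducible variety $\Vcal_I$.

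The intermediate range $\binom{I}{2} < J < 2^{I-1}$ is the main technical obstacle. Corollary~\ref{cor: generic linear space and generic A} tells us that a generic codimension-$(I-1)$ linear space always cuts out $2^{I-1}$ distinct points on $\Vcal_I$, never exactly $J$; to get fewer distinct points one must use a non-generic $\Wcal$ that is tangent to $\Vcal_I$ at $k = 2^{I-1} - J$ carefully chosen locations, with the multiplicities at tangent and transverse points summing to the B\'ezout total $2^{I-1}$ (simple tangencies suffice when $J \geq 2^{I-2}$, higher-order tangencies otherwise). A dimension count in $\Gr(\binom{I}{2}, m-1)$ shows that such tangent linear spaces exist for every feasible combinatorial choice of multiplicities. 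The hardest step will be verifying that for a generic $\Wcal$ within this tangency locus the $J$ distinct intersection points still span $\Wcal$: Corollary~\ref{cor: generic linear space and generic A} does not apply directly, so one must prove an analogue for the tangency locus by dimension-counting on an appropriate incidence variety of tangent linear spaces and point configurations.
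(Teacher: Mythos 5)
Your necessity argument and the two extreme cases of sufficiency are fine: necessity is essentially the paper's argument (a zero-dimensional linear section of $\Vcal_I$ has at most $\deg \Vcal_I = 2^{I-1}$ points; your "extend $\Wcal(A)$ to a generic complementary-dimensional $L$ keeping the intersection finite" step is true but asserted without justification), and $J\leq{I\choose 2}$, $J=2^{I-1}$ follow from Theorem~\ref{thm: complex identifiable} and Lemma~\ref{lemma: generic A.A intersects in 2 power I-1 points}. The genuine gap is the intermediate range ${I\choose 2}<J<2^{I-1}$, which is the actual content of the theorem. Your plan there is to find a codimension-$(I-1)$ space tangent to $\Vcal_I$ at $2^{I-1}-J$ points so that the multiplicities absorb the missing intersections, but the existence of such spaces is only asserted ("a dimension count shows"), and nothing is verified: that the tangency locus is nonempty for every prescribed multiplicity pattern, that the resulting intersection is still zero-dimensional, that the remaining points are distinct, and how to handle the higher-order contact you yourself invoke when $J<2^{I-2}$. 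You also explicitly leave the "hardest step" unresolved, so the sufficiency proof is not complete as written.

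Moreover, that hardest step -- showing the $J$ distinct intersection points span $\Wcal$ -- is not needed at all. If $\Wcal\cap\Vcal_I$ consists of exactly $J$ distinct points $\fa_1^{\otimes 2},\ldots,\fa_J^{\otimes 2}$, take these as the columns of $A$: then $\Wcal(A)\subseteq\Wcal$, hence $\Wcal(A)\cap\Vcal_I\subseteq\Wcal\cap\Vcal_I$, and since each $\fa_j^{\otimes 2}$ lies in $\Wcal(A)$ the intersection is exactly these points; distinct points of $\Vcal_I$ give non-collinear columns, so $A$ is complex identifiable by Proposition~\ref{prop: complex identifiable}, whether or not the points span $\Wcal$. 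What is really required is only the existence, for every $1\leq J\leq 2^{I-1}$, of a linear section of $\Vcal_I$ consisting of exactly $J$ distinct points, i.e.\ a system of $I-1$ quadrics in $I$ variables with exactly $J$ distinct common zeros. The paper produces this by an elementary induction on $I$: given $I-2$ quadrics in $I-1$ variables with exactly $k$ distinct zeros, adjoin $x_1^2-x_I^2$ after a change of coordinates so that either all zeros have nonzero first coordinate (giving $2k$ distinct zeros) or exactly one has first coordinate zero (giving $2k-1$), with base cases $I=2,3$. Your tangency strategy might be salvageable, but as proposed it is incomplete precisely where this explicit construction does the work, and it spends its effort on a spanning condition the argument does not need.
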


\begin{proof}
If $\dim (\Wcal\cap \Vcal_I)$ is finite, then the number of intersection points is at most the degree $2^{I-1}$. Hence, if there are at least $J>2^{I-1}$ points in the intersection, we have $\dim (\Wcal\cap \Vcal_I)\geq 1$ and the matrix is complex non-identifiable.

It remains to consider $J\leq2^{I-1}$. 
For every $1\leq k\leq 2^{I-1}$, we show that there exists a projective linear space $\Wcal$ of projective dimension ${I\choose 2}$ such that $\Wcal\cap \Vcal_I$ consists of exactly $k$ points (counted without multiplicity), as follows.

The intersection $\Wcal\cap \Vcal_I$ is the vanishing locus of $I-1$ homogeneous quadrics in $I$ variables.
When $I=2$, the statement is the fact that a quadratic equation can have 1 or 2 complex roots.
When $I=3$, the result follows from~\cite[Example 3]{fevola2020pencils}.

We use induction. Assume we can construct a system of $I-2$ quadrics $f_1,\ldots,f_{I-2}$ in $I-1$ variables $x_1,\ldots,x_{I-1}$ such that the vanishing locus has dimension 0 and there are $k$ points in the vanishing locus of $f_1,\ldots,f_{I-2}$, for some $1\leq k\leq 2^{I-2}$. After a change of basis, we can ensure all points in the vanishing locus have first coordinate nonzero. By adding the quadric $x_1^2-x_I^2$, we obtain $I-1$ quadrics in $I$ variables with $2k$ intersection points (counted without multiplicity). 
For odd values, we apply a change of basis such that one point in the vanishing locus of $f_1,\ldots,f_{I-2}$ has first coordinate $0$. Adding the quadric $x_1^2-x_I^2$ gives a system of $I-1$ quadrics in $I$ variables that intersect in $2k-1$ points (counted without multiplicity).
\end{proof}

There exist (real) identifiable matrices $A \in \RR^{I \times J}$ whenever $J\leq 2^{I-1}$, by similar arguments. But our arguments do not rule out the existence of an identifiable matrix for $J>2^{I-1}$.

\subsection{Low-rank identifiable matrices}
\label{sec:lowrank}

Complex identifiability occurs generically for $J\leq {I\choose 2}$, by Theorem \ref{thm: complex identifiable}.
Later we study whether the set of complex non-identifiable matrices $A \in \CC^{I \times J}$ is closed, for $J\leq {I\choose 2}$. We will see that the 
obstacle
is the existence of complex identifiable matrices with $\mathbf{a}_1^{\otimes 2}, \ldots, \mathbf{a}_J^{\otimes 2}$ linearly dependent. Here we study such matrices.

\begin{definition}
Denote the columns of $A\in \mathbb{R}^{I_1\times J}$ and $B\in \mathbb{R}^{I_2\times J}$ by $\mathbf{a}_1,\ldots,\mathbf{a}_J$ and $\mathbf{b}_1,\ldots,\mathbf{b}_J$, respectively. The \emph{Khatri-Rao product} is the matrix $A\odot B \in \mathbb{R}^{I_1I_2\times J}$ with $j$th column $\mathbf{a}_j\otimes\mathbf{b}_j$, vectorized into a vector of length $I_1 I_2$. We consider $A\odot A \in \RR^{I^2  \times J}$ as a matrix of size ${I+1\choose 2}\times J$, by deleting the repeated rows. 
\end{definition}

\noindent The projectivization of the column space of $A\odot A$ is the projective linear space $\Wcal(A)$.

\begin{proposition}\label{prop: identifiable but khatri-rao not full rank}
There exist complex identifiable $A \in \CC^{I \times J}$, where $J\leq {I\choose 2}$ 
and $\rank (A\odot A) < J$, if and only if $J\geq 8$.
\end{proposition}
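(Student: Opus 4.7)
Plan. The proposition splits into a necessity direction ($J \leq 7$ excludes such $A$) and a sufficiency direction ($J \geq 8$ admits such $A$). Both rely on the characterization of complex identifiability via $\Wcal(A) \cap \Vcal_I$ given by Proposition~\ref{prop: complex identifiable}.

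For necessity, assume $2 \leq J \leq 7$, $J \leq {I \choose 2}$, no two columns of $A$ collinear, and $\rank(A \odot A) < J$; the goal is to produce an extra rank-one matrix in $\Wcal(A)$. Take a minimal linear relation $\sum_{j \in S} \lambda_j \mathbf{a}_j^{\otimes 2} = 0$ with support $S$, and write $A_S$ for the corresponding submatrix. Since $A_S D A_S\T = 0$ for an invertible diagonal $D$, we have $\rank(A_S) < |S|$. Two short arguments rule out $|S| \leq 3$: $|S|=2$ forces collinearity of two columns, and $|S|=3$ equates a rank-one matrix to a rank-two matrix via $\lambda_3 \mathbf{a}_3 \mathbf{a}_3\T = -\lambda_1 \mathbf{a}_1 \mathbf{a}_1\T - \lambda_2 \mathbf{a}_2 \mathbf{a}_2\T$. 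So $4 \leq |S| \leq 7$. Restricting to $V_S = \Span_{j \in S} \mathbf{a}_j \subseteq \CC^I$ of dimension $d < |S|$, the columns of $A_S$ are $|S|$ points of $\PP(V_S)$ failing to impose independent conditions on quadrics. For each pair $(|S|,d)$, I would show that one of two outcomes occurs: either $d$ is so small that $\Span\{\mathbf{a}_j^{\otimes 2} : j \in S\} = \operatorname{Sym}^2(V_S)$, which contains the full Veronese of $\PP(V_S)$ and hence infinitely many extra rank-one matrices, or a proper subset of $S$ already lies in a strictly smaller subspace $V' \subsetneq V_S$ and the $|S|=4$ base case applies to it. In the base case $|S|=4$, the dependence forces $d \leq 3$, and either $d=2$ (three generic columns in a plane already span $\operatorname{Sym}^2(V_S)$) or $d=3$ (four points in $\PP^2$ fail to impose independent conditions on conics iff three are collinear, which reduces to $d=2$ on the line). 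Either way, a new rank-one matrix lies in $\Wcal(A)$, so $A$ is not complex identifiable.

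For sufficiency, I first build an example with $J=8$. Take a generic $A_0 \in \CC^{4 \times 7}$; its projective span $\Wcal(A_0)$ has dimension $6$ in $\PP^9$, and by Lemma~\ref{lemma: generic A.A intersects in 2 power I-1 points} it meets $\Vcal_4$ (of dimension $3$ and degree $2^3 = 8$) in exactly $8$ distinct points. Seven of these are $\mathbf{a}_1^{\otimes 2}, \ldots, \mathbf{a}_7^{\otimes 2}$; let $\mathbf{b}^{\otimes 2}$ be the eighth, which by genericity of $A_0$ is not collinear to any $\mathbf{a}_i$. Then $\tilde A_0 = [\mathbf{a}_1 \cdots \mathbf{a}_7 \; \mathbf{b}] \in \CC^{4 \times 8}$ has $\Wcal(\tilde A_0) = \Wcal(A_0)$, so $\rank(\tilde A_0 \odot \tilde A_0) = 7 < 8$, and $\Wcal(\tilde A_0) \cap \Vcal_4$ is exactly the eight columns' squares, so $\tilde A_0$ is complex identifiable. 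To satisfy $J \leq {I \choose 2}$ (currently $8 > 6$), I block-diagonally combine $\tilde A_0$ with a generic complex identifiable $A_1 \in \CC^{I_1 \times (J-8)}$ with $J - 8 \leq {I_1 \choose 2}$, which exists by Theorem~\ref{thm: complex identifiable}. The combined matrix $A \in \CC^{(4+I_1) \times J}$ has $\Wcal(A) = \Wcal(\tilde A_0) \oplus \Wcal(A_1)$, and any rank-one $\mathbf{v}^{\otimes 2} \in \Wcal(A)$ must have $\mathbf{v}$ supported in a single block, since the cross-block entry $\mathbf{v}_0 \mathbf{v}_1\T$ would otherwise be nonzero. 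This reduces identifiability of $A$ to that of its blocks, both of which are identifiable, while $\rank(A \odot A) = 7 + (J-8) = J-1$. A direct inequality check gives ${4+I_1 \choose 2} \geq J$ whenever $J - 8 \leq {I_1 \choose 2}$, and zero rows can be padded to reach any larger $I$.

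The main obstacle is the case analysis in the necessary direction for $|S| \in \{5,6,7\}$: one must classify the possible configurations of $|S|$ points in $\PP^{d-1}$ failing to impose independent conditions on quadrics and show each yields an extra rank-one element in the span. A uniform Cayley--Bacharach-type principle—asserting that a finite set imposing fewer than expected conditions on quadrics contains a sub-configuration lying on a hyperplane or a line—would let the $|S| = 4$ base case handle the recursion. A minor technicality in the sufficiency direction is confirming that the B\'ezout ``eighth point'' $\mathbf{b}$ is not collinear to any $\mathbf{a}_i$, which follows from openness of the non-collinearity condition and the genericity of $A_0$.
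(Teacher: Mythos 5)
The necessity direction ($J\le 7$) is where your proposal has a genuine gap. After reducing to a minimal relation with support $S$ and $4\le |S|\le 7$, you say you ``would show'' that each configuration either spans all of $\operatorname{Sym}^2(V_S)$ or degenerates into a smaller subspace, and you defer the cases $|S|\in\{5,6,7\}$ to an unstated, unproven ``uniform Cayley--Bacharach-type principle.'' That is the entire content of the hard part of this direction, so the proof is not complete. Worse, your base case is wrong as stated: four points spanning $\PP^2$ with exactly three on a line \emph{do} impose independent conditions on conics (any conic through the three collinear points contains their line $L$, so the conics through all four are $L\cdot M$ with $M$ a line through the fourth point --- a $2$-dimensional space, which equals $6-4$); four points in $\PP^2$ fail to impose independent conditions on conics only when all four are collinear, i.e.\ when $d=2$. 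The conclusion for $|S|=4$ survives (the dependence forces $d=2$), but since the recursion for larger $|S|$ is modelled on this classification, the error propagates. Note also that the standard general-position lemma for quadrics requires $|S|\le 2d-1$, which fails for instance when $|S|=7$ and $d=3$, so it cannot drive the recursion uniformly. The paper's necessity argument is different and complete: it bounds $3\le\rank A\le J-3$ (killing $J\le 5$ outright), reduces $J=6$ and the $\rank A=3$ subcase of $J=7$ to the non-existence of complex identifiable $3\times J$ matrices for $J>2^{2}=4$ (Theorem~\ref{thm: when compelx identifiable exists for J>Ichoose 2}), and settles the last case $(J,\rank A)=(7,4)$ by a Macaulay2 computation.

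Your sufficiency direction is closer to the mark. The $4\times 8$ construction via the eighth B\'ezout point is essentially the paper's observation that a complex identifiable $A'\in\CC^{I'\times J}$ with ${I'\choose 2}+2\le J\le 2^{I'-1}$ automatically has $\rank(A'\odot A')<J$, specialized to $I'=4$, and your cross-block argument (the off-diagonal block $\mathbf{v}_0\mathbf{v}_1\T$ must vanish, and full column rank of $A_1\odot A_1$ kills the coefficients on the second block) is correct. The block-diagonal extension is an attractive alternative to the paper's explicit $5\times J$ examples for $J=9,10,11$, but it outputs a matrix with at least $4+I_1\ge 6$ rows, so it never produces examples for $(I,J)=(5,9)$ or $(5,10)$, both of which satisfy $J\le{I\choose 2}$; rows cannot be deleted by padding. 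Since the proposition is used later for fixed $(I,J)$ (closedness of $S_{I\times J}$), the per-pair reading is the intended one, and you would still need to supply those two cases --- exactly the ones the paper handles with explicit computationally verified matrices.
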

\begin{proof}
There exists a complex identifiable matrix $A \in \CC^{I\times J}$ when $J \leq 2^{I-1}$, by Theorem~\ref{thm: when compelx identifiable exists for J>Ichoose 2}. Such a matrix has $\rank (A\odot A) < J$ if $J \geq {{I \choose 2}} + 2$. 
Hence if there exists $I^\prime$ with ${I^\prime\choose 2}+2\leq J \leq 2^{I^\prime-1}$, then we can construct complex identifiable $A^\prime \in \CC^{I^\prime \times J}$ with $\rank(A^\prime \odot A^\prime) < J$. Taking an $I^\prime$-dimensional subspace in $\CC^I$ gives a complex identifiable matrix $A \in \CC^{I \times J}$ with $\rank(A\odot A) < J$. For $J\geq 12$ or $J=8$, there exist such values $I^\prime$.

It remains to consider $9 \leq J \leq 11$. Since $J\leq {I\choose 2}$, we have $I\geq 5$. We give examples of complex identifiable matrices with $\rank(A\odot A) < J$ for $(I,J)=(5,9),(5,10),(5,11)$. For larger $I$, we take their first five rows and set the remaining rows to zero.
Let
$$
A_1 = \begin{pmatrix}
0&3&1&\frac{-27417}{160871}\\
1&9&11&\frac{282663}{36181}\\
2&14&13&17\\
3&1&\frac{-89735}{6339}&19\\
0&0&0&0
\end{pmatrix}, \qquad 
\mathbf{a}_5 = \begin{pmatrix}
1 \\ 1 \\ 1 \\ 1 \\ 1 
\end{pmatrix},
\qquad 
\mathbf{a}_6 = \begin{pmatrix}
1 \\ 2 \\ 3 \\ 4 \\ 7 
\end{pmatrix}.
$$
Let $I_5$ denote the $5 \times 5$ identity matrix.
We check that matrices $\begin{pmatrix} A_1 & I_5 \end{pmatrix} \in \CC^{5 \times 9}, \begin{pmatrix}
    A_2 & \mathbf{a}_5 & I_5
\end{pmatrix}\in \CC^{5 \times 10}, \begin{pmatrix}
    A_3& \mathbf{a}_5 & \mathbf{a}_6 & I_5
\end{pmatrix}\in \CC^{5 \times 11}$ are complex identifiable and have $\rank(A\odot A)< J$.

Conversely, we show that there is no complex identifiable $A\in\CC^{I\times J}$ with $\rank (A\odot A)<J$, 
when $J\leq {I\choose 2}$
and $J\leq 7$.
Such matrices have $3\leq\rank A \leq J-3$, as follows. 
If $\rank A\leq 2$, then $A$ either has two collinear columns or, after a change of basis, it has columns $\mathbf{e}_1,\mathbf{e}_2,\mathbf{e}_1+\mathbf{e}_2$. This is not complex identifiable, since $(\mathbf{e}_1+a\mathbf{e}_2)^{\otimes 2}$ is a linear combination of $\mathbf{e}_1^{\otimes 2},\mathbf{e}_2^{\otimes 2},(\mathbf{e}_1+\mathbf{e}_2)^{\otimes 2}$ for any $a\in \CC$. By the same argument, any matrix with three linearly dependent columns is not complex-identifiable. Hence $\rank A \geq 3$. If $\rank A\geq J-2$, then after a change of basis the first $J-2$ columns of $A$ are $\mathbf{e}_1,\ldots,\mathbf{e}_{J-2}$, and either $A$ has collinear columns or $A\odot A$ has full column rank.
This bound on $\rank A$ and $J \leq {{I \choose 2}}$ implies that there are no complex identifiable examples for $J \leq 5$.

When $J=6$, we need $I\geq 4$ since $J\leq {I\choose 2}$ and we have $3\leq \rank A\leq 6-3=3$. After a change of basis, we obtain a $3\times 6$ complex identifiable matrix, which is impossible by Theorem \ref{thm: when compelx identifiable exists for J>Ichoose 2}.
When $J=7$, we need $I\geq 4$ since $J\leq {I\choose 2}$ and $3\leq \rank A\leq 7-3=4$. If $\rank A=3$, after a change of basis, we obtain a $3\times 7$ complex identifiable matrix, again impossible by Theorem \ref{thm: when compelx identifiable exists for J>Ichoose 2}. If $\rank A=4$, it can be checked in Macaulay2 that $A\odot A$ does not have full column rank only if $A$ contains 3 linearly dependent columns. 
\end{proof}

One direction of Proposition~\ref{prop: identifiable but khatri-rao not full rank} still holds for real identifiability: all of our matrices are real, and complex identifiability implies real identifiability. The converse is more difficult: we would need the non-existence of identifiable matrices when $J>2^{I-1}$.

\subsection{Non-identifiable matrices}
\label{sec:nonid_special}

In this section, we study non-identifiable and complex non-identifiable matrices.

\begin{proposition}\label{prop: exist non-identifiable for small J}
There exist non-identifiable matrices $A \in \RR^{I\times J}$ with no pair of collinear columns for any $3 \leq J\leq {I\choose 2}$. 
\end{proposition}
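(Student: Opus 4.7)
The plan is to apply Theorem~\ref{thm: iff for identifiablity} to an explicit construction that extends Example~\ref{ex:nonid}. Handle first the base case $J = 3$ (note that $3 \leq J \leq {I \choose 2}$ forces $I \geq 3$), and then bootstrap to larger $J$ by adjoining generic columns.

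For the base case, I would take $A_0 \in \RR^{I \times 3}$ whose first two rows are those of the matrix in Example~\ref{ex:nonid}, namely $\bigl(\begin{smallmatrix} 1 & 0 & 1 \\ 0 & 1 & 1 \end{smallmatrix}\bigr)$, with all remaining rows zero. Denote the columns of $A_0$ by $\fa_1, \fa_2, \fa_3$ and set $\fb = (1, 2, 0, \ldots, 0)\T \in \RR^I$. The identity
$$\fb^{\otimes 2} \;=\; -\fa_1^{\otimes 2} + 2\,\fa_2^{\otimes 2} + 2\,\fa_3^{\otimes 2}$$
from Example~\ref{ex:nonid} transfers verbatim: padding each vector with zero coordinates appends only zero rows and columns to each outer product, so the linear relation is preserved. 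The columns of $A_0$ are pairwise non-collinear and $\fb$ is collinear to none of them, so Theorem~\ref{thm: iff for identifiablity} yields non-identifiability.

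For $3 < J \leq {I \choose 2}$, I would extend $A_0$ to $A \in \RR^{I \times J}$ by appending $J - 3$ columns $\fa_4, \ldots, \fa_J \in \RR^I$ chosen outside a finite union of lines: each new column should be non-collinear to every previously chosen column and to $\fb$. Generic choices clearly satisfy this. The relation exhibiting $\fb^{\otimes 2} \in \Span\{\fa_1^{\otimes 2}, \ldots, \fa_J^{\otimes 2}\}$ is inherited from the base case, $\fb$ is still collinear to no column of $A$, and no pair of columns of $A$ is collinear. Theorem~\ref{thm: iff for identifiablity} again certifies non-identifiability.

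There is no substantial obstacle in the argument; it is a direct consequence of the characterization in Theorem~\ref{thm: iff for identifiablity} together with the rank-one relation already displayed in Example~\ref{ex:nonid}. The real content of the proposition is qualitative: even throughout the range $J \leq {I \choose 2}$ where Theorem~\ref{thm: real generic matrix result} asserts that a generic matrix is identifiable, one can produce explicit non-identifiable mixings with no pair of collinear columns for every admissible $J$ up to the generic identifiability threshold.
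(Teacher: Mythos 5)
Your proof is correct and follows essentially the same route as the paper: both exhibit the three columns $\mathbf{e}_1,\mathbf{e}_2,\mathbf{e}_1+\mathbf{e}_2$ (your padded Example~\ref{ex:nonid} matrix is exactly this) together with a non-collinear vector $\mathbf{e}_1+a\mathbf{e}_2$ whose square lies in the span, and then invoke Theorem~\ref{thm: iff for identifiablity}. The only cosmetic difference is that the paper fills out the remaining $J-3$ columns with the explicit vectors $\mathbf{e}_i+\mathbf{e}_j$ while you append generic columns; both choices work.
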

\begin{proof}
Let $J={I\choose 2}$. Take $A$ to have first three columns $\mathbf{e}_1,\mathbf{e}_2, \mathbf{e}_1 + \mathbf{e}_2$ and remaining columns $\mathbf{e}_i+\mathbf{e}_j$ where $i< j$ and $(i,j)\neq (1, 2), (1,3), (2,3)$. This matrix has no collinear columns.
It is not identifiable, as $(\mathbf{e}_1+a\mathbf{e}_2)^{\otimes 2}$ is a linear combination of $\mathbf{e}_1^{\otimes2},\mathbf{e}_2^{\otimes 2},(\mathbf{e}_1+\mathbf{e}_2)^{\otimes 2}$ for any $a$.
When $J<{I\choose 2}$, take the first $J$ columns of $A$. The submatrix is non-identifiable for $J \geq 3$.
\end{proof}

\begin{theorem}\label{thm: S Itimes J not open}
If the set of identifiable matrices of size $I\times J$ is non-empty, it is not closed.
\end{theorem}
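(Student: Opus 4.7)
The plan is to perturb a given identifiable matrix within the identifiable locus until we reach a limit that lies outside it. Given an identifiable matrix $A \in \RR^{I \times J}$ with columns $\fa_1, \ldots, \fa_J$, consider the one-parameter family $A(t) = A \cdot \mathrm{diag}(1, \ldots, 1, t)$, which scales only the last column of $A$ by $t \in \RR$. I would show that $A(t)$ is identifiable for every $t \neq 0$ but $A(0)$ is not identifiable; then $A(1/n)$ exhibits a sequence of identifiable matrices converging to a non-identifiable limit, so the set of identifiable matrices fails to be closed.

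The first step verifies that column scaling by a nonsingular diagonal matrix preserves identifiability. Suppose $A(t)\mathbf{s}' = B\mathbf{r}$ in distribution for some $B \in \RR^{I \times K}$ with $K \leq J$ and $\mathbf{r}$ carrying the same number of Gaussian entries as $\mathbf{s}'$. Setting $\mathbf{s} = \mathrm{diag}(1, \ldots, 1, t)\mathbf{s}'$ gives $A\mathbf{s} = B\mathbf{r}$ in distribution. Since $t \neq 0$, the entries of $\mathbf{s}$ remain non-degenerate and independent, and the number of Gaussian entries is unchanged (scaling by a nonzero constant preserves both Gaussianity and non-Gaussianity). Identifiability of $A$ then forces $K = J$ and $B = A P E$ for some permutation $P$ and nonsingular diagonal $E$. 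Rewriting $B = A(t) \cdot \mathrm{diag}(1, \ldots, 1, t^{-1}) P E$ and commuting the permutation past the diagonal shows that $B$ equals $A(t)$ up to permutation and scaling of columns, so $A(t)$ is identifiable.

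The second step handles the limit. The matrix $A(0)$ has its last column equal to $\mathbf{0}$, and the zero vector is collinear with every $\fa_j$ (as $\mathbf{0} = 0 \cdot \fa_j$). Consequently $A(0)$ violates the necessary \emph{no pair of columns collinear} condition in Theorem~\ref{thm: iff for identifiablity}, so $A(0)$ is not identifiable. Combining with the first step, the sequence $A(1/n)$ of identifiable matrices converges to the non-identifiable matrix $A(0)$, proving the theorem. There is essentially no substantive obstacle here: the only point requiring care is the bookkeeping in the first step, and even that is an immediate consequence of Definition~\ref{def:id}. The key conceptual content is simply that the scaling flow connects any identifiable matrix to the boundary of the identifiable locus by degenerating a column to zero.
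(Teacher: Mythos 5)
Your proof is correct, but it takes a different degeneration than the paper's. The paper first rules out $\rank A\leq 2$ (such a matrix with at least three columns is non-identifiable), then, for $\rank A\geq 3$, normalizes three linearly independent columns to $\mathbf{e}_1+\mathbf{e}_2+\mathbf{e}_3,\mathbf{e}_1,\mathbf{e}_2$ and scales the \emph{third row} by $t$; the limit $A_0$ has three pairwise non-collinear but linearly dependent columns $\mathbf{e}_1+\mathbf{e}_2,\mathbf{e}_1,\mathbf{e}_2$, which is non-identifiable because $(\mathbf{e}_1+a\mathbf{e}_2)^{\otimes 2}$ lies in the span of their squares. You instead scale a single \emph{column} to zero. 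Your route is more elementary: it needs no case analysis on $\rank A$, no change of basis, and no appeal to the rank-one-in-a-span criterion — only the observation that column scaling by a nonsingular diagonal matrix preserves identifiability (immediate from Definition~\ref{def:id}, and your bookkeeping with $B=A(t)\,\mathrm{diag}(1,\ldots,1,t^{-1})PE$ is fine). What the paper's construction buys is that its non-identifiable limit still has nonzero, pairwise non-collinear columns, i.e.\ non-closedness is exhibited inside the ``nice'' stratum of mixing matrices; your limit is the maximally degenerate one. One small point of rigor: justifying non-identifiability of $A(0)$ by saying $\mathbf{0}$ is ``collinear with every $\fa_j$'' leans on Theorem~\ref{thm: iff for identifiablity}, which is only stated for matrices with no collinear columns (the necessity of that hypothesis is Proposition~\ref{prop: necessary con for identifiability}, whose ``combine the columns'' argument is itself a bit awkward for a zero column). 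It is cleaner to argue directly from Definition~\ref{def:id}: choose all sources non-Gaussian, let $B$ be the first $J-1$ columns of $A(0)$ and $\mathbf{r}=(s_1,\ldots,s_{J-1})$; then $B\mathbf{r}$ and $A(0)\mathbf{s}$ have the same distribution and the same number of Gaussian entries while $K=J-1<J$, so identifiability fails. With that adjustment (and setting aside the degenerate case $J=1$, which neither proof addresses), your argument is complete.
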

\begin{proof}
Assume $\rank A\leq 2$. If $A$ has at least three columns, it either has collinear columns or after a change of basis it has three columns $\mathbf{e}_1,\mathbf{e}_2,\mathbf{e}_1+\mathbf{e}_2$. Then 
$A$ is non-identifiable, 
as in the proof of Proposition \ref{prop: exist non-identifiable for small J}.

If $\rank A\geq 3$, we can assume without loss of generality that the first three columns are $\mathbf{e}_1+\mathbf{e}_2+\mathbf{e}_3,\mathbf{e}_1,\mathbf{e}_2$. We define a sequence of matrices $A_t$, where $A_t$ has third row of $A$ scaled by $t$. In particular, the first column of $A_t$ is $\mathbf{e}_1+\mathbf{e}_2+t\mathbf{e}_3$. If $t\neq 0$, $A_t$ is identifiable. The limit $A_0=\operatorname{lim}_{t\to 0}A_t$ has first three columns: $\mathbf{e}_1+\mathbf{e}_2,\mathbf{e}_1,\mathbf{e}_2$, so it is non-identifiable.
\end{proof}

\noindent We give a test for a point in $\Wcal(A)$ to lie in $\Wcal(A) \cap \Vcal_I$, which is efficient to test in practice.

\begin{lemma}\label{lemma:matrix D}
Fix $A \in \CC^{I \times J}$ with columns $\mathbf{a}_1, \ldots, \mathbf{a}_J$. Let $n={I\choose 2}$. 
Define $C(A) \in \CC^{ n\times {J\choose 2}}$ to be the matrix of $2\times 2$ minors of $A$. Define $D(A)=C(A)\odot C(A)$, a matrix of size ${n+1\choose 2}\times {J\choose 2}$. Then $\sum_{j=1}^J\lambda_j \fa_j^{\otimes 2}\in \Vcal_I$ if and only if $(\lambda_1,\ldots,\lambda_J)$ satisfies 
\begin{equation}\label{eq:kerD}
D(A) (\lambda_1\lambda_2,\ldots,\lambda_{J-1}\lambda_J)=0.
\end{equation}
\end{lemma}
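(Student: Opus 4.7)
The plan is to unpack what ``$\sum_j \lambda_j \fa_j^{\otimes 2}\in\Vcal_I$'' means concretely and then use Cauchy--Binet on the factorization $M:=\sum_j\lambda_j\fa_j^{\otimes 2}=A\Lambda A\T$, where $\Lambda=\mathrm{diag}(\lambda_1,\ldots,\lambda_J)$, to translate the rank-one condition on $M$ into the linear equations $D(A)v=0$ with $v=(\lambda_j\lambda_{j'})_{j<j'}$.

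First I would observe that a (nonzero) symmetric matrix $M\in\CC^{I\times I}$ lies in the affine cone over $\Vcal_I$ if and only if $\rk M\le 1$, if and only if every $2\times 2$ minor of $M$ vanishes. By the symmetry $M_{ab}=M_{ba}$, the minor indexed by row pair $\{i,i'\}$ and column pair $\{k,k'\}$ agrees (up to sign) with the one obtained by swapping these pairs, so the independent minor equations are indexed by unordered pairs $\{\{i,i'\},\{k,k'\}\}$. This gives exactly $\binom{n+1}{2}$ minors, matching the number of rows of $D(A)=C(A)\odot C(A)$.

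Next, I would apply the Cauchy--Binet formula to $M=A\Lambda A\T$ to compute a single $2\times 2$ minor:
\begin{equation*}
\det\begin{pmatrix}M_{ik}&M_{ik'}\\M_{i'k}&M_{i'k'}\end{pmatrix}
=\sum_{1\le j<j'\le J}\lambda_j\lambda_{j'}\,
C(A)_{\{i,i'\},\{j,j'\}}\,C(A)_{\{k,k'\},\{j,j'\}},
\end{equation*}
where $C(A)_{\{i,i'\},\{j,j'\}}=a_{ij}a_{i'j'}-a_{ij'}a_{i'j}$ is the corresponding $2\times 2$ minor of $A$; diagonal terms $j=j'$ drop out because those minors of $A$ vanish. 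By the definition of the symmetric Khatri--Rao product, the right-hand side is precisely the entry of $D(A)\cdot v$ indexed by the symmetric row pair $\{\{i,i'\},\{k,k'\}\}$, so all $\binom{n+1}{2}$ symmetric minors of $M$ vanish simultaneously iff $D(A)v=0$.

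The only real content is the Cauchy--Binet expansion together with bookkeeping to match row indices of $D(A)$ with symmetric minor pairs, which is the main point to verify carefully; everything else is packaging. I would also briefly note that over $\CC$ any nonzero symmetric matrix of rank one has the form $\mathbf{v}\mathbf{v}\T$, so the rank one condition is genuinely equivalent to membership in (the cone over) $\Vcal_I$, and the case $M=0$ is trivial since both sides of the equivalence hold automatically.
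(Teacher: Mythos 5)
Your proof is correct and follows essentially the same route as the paper: both reduce membership in $\Vcal_I$ to the vanishing of the $2\times 2$ minors of $\sum_j\lambda_j\fa_j^{\otimes 2}$ and identify each minor with an entry of $D(A)(\lambda_1\lambda_2,\ldots,\lambda_{J-1}\lambda_J)$. The only cosmetic difference is that you obtain the key identity by Cauchy--Binet applied to $A\Lambda A\T$, whereas the paper expands the minors directly; the resulting formula is identical.
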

\begin{proof}
Denote the matrix
$\sum_{j=1}^J\lambda_j \fa_j^{\otimes 2}$ by $G$, with entries
$g_{k \ell} = \sum_{j=1}^J \lambda_j a_{k j}a_{\ell j}$.
Suppose $G \in \Vcal_I$. 
The second Veronese embedding $\mathcal{V}_I$ is generated by $2 \times 2$ minors. 
Evaluated on $G$, these are
$$ g_{k \ell }g_{k^\prime \ell ^\prime}-g_{k \ell^\prime}g_{k^\prime \ell}=\sum_{1\leq i<j\leq J} (a_{ki}a_{k^\prime j}-a_{k^\prime i }a_{k j})(a_{\ell i}a_{\ell^\prime j}-a_{\ell^\prime i }a_{\ell j}) \lambda_i \lambda_j,$$
The product
$(a_{ki}a_{k^\prime j}-a_{k^\prime i }a_{k j})(a_{\ell i}a_{\ell^\prime j}-a_{\ell^\prime i }a_{\ell j})$ is the entry of $D(A)$ at 
row $((k,k^\prime),(\ell,\ell^\prime))$ and column $(i,j)$.
Hence the condition is $D(A)(\lambda_1\lambda_2,\ldots,\lambda_{J-1}\lambda_J)=0$.
\end{proof}

When $A\odot A$ has full column rank, $A$ is complex identifiable if and only if $\ker (D(A))\cap \Zcal_J=\emptyset$ where $\Zcal_J$ is the $J$-th projected second Veronese variety. This is faster than checking all $2 \times 2$ minors, especially when $\ker (D(A))$ has small dimension.

Next we study the set of complex non-identifiable matrices $A \in \CC^{I\times J}$, which we denote by $S_{I\times J}$.
We study the closure of $S_{I\times J}$ and give conditions for when $S_{I\times J}$ is closed.
It is an open problem to extend these results to real identifiability. 
The argument in Theorem~\ref{thm: S Itimes J not open} applies to complex identifiability: it shows that $S_{I \times J}$ is not open.

\begin{proposition}\label{prop: Zariski closure}
Define $X_{I\times J} \subset \CC^{I \times J}$ to be the set
$$\{A\in \CC^{I\times J}: \ker (D(A))\cap \Zcal_J=\emptyset \}.$$
For $J\leq {I\choose 2}$, the \emph{closure} of the set of complex non-identifiable matrices is $X_{I\times J}$.
\end{proposition}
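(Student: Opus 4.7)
I interpret the set $X_{I\times J}$ in the statement as $\{A : \ker D(A)\cap \Zcal_J \neq \emptyset\}$, since the definition as printed would make $X_{I\times J}$ Zariski open, which cannot equal a Zariski closure of a proper subset. The plan is to prove two inclusions: $\overline{S_{I\times J}}\subseteq X_{I\times J}$, and the reverse $X_{I\times J}\subseteq \overline{S_{I\times J}}$.

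For the first inclusion, I first show $S_{I\times J}\subseteq X_{I\times J}$. If $A$ is complex non-identifiable then, by Proposition \ref{prop: complex identifiable}, there is a vector $\mathbf{b}$ not collinear to any column with $\mathbf{b}^{\otimes 2} = \sum_j \lambda_j \fa_j^{\otimes 2}$. At least two entries of $\lambda$ must be nonzero (otherwise $\mathbf{b}$ would be collinear to a single $\fa_j$), so $(\lambda_i\lambda_j)_{i<j}$ is a nonzero point of $\Zcal_J$ lying in $\ker D(A)$ by Lemma \ref{lemma:matrix D}. Closedness of $X_{I\times J}$ follows from the properness of the projection $\pi$ from the closed incidence variety
\[
Y = \{(A,[y])\in \CC^{I\times J}\times \Zcal_J : D(A)\, y = 0\}
\]
to the first factor, since $\Zcal_J$ is projective. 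Taking closures yields $\overline{S_{I\times J}}\subseteq X_{I\times J}$.

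For the reverse inclusion, I would establish Zariski density of $S_{I\times J}$ in $X_{I\times J}$. Inside $Y$, define the Zariski open subset $Y^{\mathrm{nid}}$ by the two conditions on $(A,[\varphi(\lambda)])\in Y$ that $\sum_j\lambda_j \fa_j^{\otimes 2}$ has rank exactly one, say equal to $\mathbf{b}^{\otimes 2}$, and that $\mathbf{b}$ is not collinear to any column of $A$. By Proposition \ref{prop: complex identifiable}, $\pi(Y^{\mathrm{nid}}) = S_{I\times J}$. If $Y^{\mathrm{nid}}$ is Zariski dense in $Y$, then $S_{I\times J}$ is dense in $\pi(Y) = X_{I\times J}$, giving the desired inclusion.

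The density of $Y^{\mathrm{nid}}$ in $Y$ is the main obstacle, and I would attack it by analyzing the second projection $Y\to \Zcal_J$. For $[\varphi(\lambda)]\in \Zcal_J$, the fiber is the determinantal variety
\[
V_{[\lambda]} = \{A \in \CC^{I\times J} : \textstyle\sum_j \lambda_j \fa_j\fa_j^T \text{ has rank}\leq 1\},
\]
cut out by the $2\times 2$ minors of the parametrized symmetric matrix. Since $\Zcal_J$ is an irreducible toric variety and each generic fiber is the preimage of the irreducible rank-$\leq 1$ symmetric cone under the dominant quadratic map $A\mapsto \sum_j \lambda_j \fa_j\fa_j^T$, a standard fibration argument (verifying flatness and irreducibility of generic fibers) should yield irreducibility of $Y$; density of $Y^{\mathrm{nid}}$ then follows from its non-emptiness, which is provided by Proposition \ref{prop: exist non-identifiable for small J}. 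The delicate boundary loci of $Y$---where the rank-$\leq 1$ combination collapses to zero (so $\rank(A\odot A) < J$) or becomes collinear to a single $\fa_{j_0}^{\otimes 2}$---would be handled by explicit one-parameter deformations $A(t) = A + t\Delta$ with generic $\Delta$, along which the rank-one witness separates from every column ray of $A(t)$ for $t\neq 0$, placing such matrices in $\overline{S_{I\times J}}$ directly.
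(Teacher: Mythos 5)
Your first half is essentially the paper's argument and is fine: interpreting the displayed condition as $\ker(D(A))\cap\Zcal_J\neq\emptyset$ (as you correctly do), the inclusion $S_{I\times J}\subseteq X_{I\times J}$ comes from Lemma~\ref{lemma:matrix D}, and closedness of $X_{I\times J}$ via properness of the projection from the incidence variety is the same as the paper's appeal to the main theorem of elimination. Two small inaccuracies there: complex non-identifiability can also be caused solely by a pair of collinear columns, in which case Proposition~\ref{prop: complex identifiable} provides no rank-one witness $\mathbf{b}$ (you still land in $X_{I\times J}$, because collinearity of $\fa_i,\fa_j$ kills the column of $D(A)$ indexed by $(i,j)$ and the corresponding coordinate point lies in $\Zcal_J$, but this case must be said); for the same reason your later claim $\pi(Y^{\mathrm{nid}})=S_{I\times J}$ is only an inclusion $\pi(Y^{\mathrm{nid}})\subseteq S_{I\times J}$.

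The genuine gap is the reverse inclusion $X_{I\times J}\subseteq\overline{S_{I\times J}}$, which is the substance of the proposition and which you leave at the level of ``should yield'' and ``would be handled''. The strategy you sketch---irreducibility of the incidence variety $Y$ via the second projection $Y\to\Zcal_J$---does not go through as described. $\Zcal_J$ is a closure, and over its boundary points the fibers are not the determinantal loci you write: for example the point of $\Zcal_J$ whose only nonzero coordinate is the one indexed by $(1,2)$ (a limit of $\varphi(\lambda)$ with $\lambda_3,\ldots,\lambda_J\to 0$) has fiber $\{A:\fa_1,\fa_2\ \text{collinear}\}$, of codimension $I-1$, while the fiber over a torus point $[\varphi(\lambda)]$ is the rank-$\leq 1$ locus of $\sum_j\lambda_j\fa_j^{\otimes 2}$, in general of a different dimension. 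So the map is not flat, irreducibility of the \emph{generic} fiber says nothing about components of $Y$ sitting over the boundary of $\Zcal_J$, and the density of $Y^{\mathrm{nid}}$ in $Y$ is exactly what has to be proved, not a consequence of a standard fibration lemma; the closing sentence about generic one-parameter deformations $A+t\Delta$ is a placeholder for precisely this missing argument. The paper avoids the incidence-variety analysis altogether: it observes that $S_{I\times J}$ is constructible (so Zariski and Euclidean closures coincide) and sandwiches $X_{I\times J}\cap\{A:\rank(A\odot A)=J\}\subseteq S_{I\times J}\subseteq X_{I\times J}$, where the left inclusion is a one-line pointwise consequence of Lemma~\ref{lemma:matrix D} (full column rank of $A\odot A$ forces the rank-one combination to be nonzero and not collinear to any $\fa_j^{\otimes 2}$), and then takes closures, the rank-deficient locus $\{\rank(A\odot A)<J\}$ playing the role your unproven density claim was meant to play. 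To complete your proof you would need either to establish that sandwich (together with density of the full-rank locus in $X_{I\times J}$) or to control every boundary stratum of $\Zcal_J$ explicitly; as written, neither is done.
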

\begin{proof}
All complex non-identifiable matrices $S_{I\times J}$ are contained in $X_{I\times J}$, by Lemma \ref{lemma:matrix D}.
Matrices $D \in \CC^{{n + 1 \choose 2} \times {J \choose 2}}$ such that $\ker D \cap \Vcal_J \neq \emptyset$ lie in some closed algebraic variety,
by the Main Elimination Theorem, see e.g. {\cite[ Definition 1, Theorem 1, Chapter 9]{mumford1999red}}.
The entries of $D(A)$ are polynomials in the entries of $A$. 
So $X_{I\times J} \subset \CC^{I \times J}$ is a closed algebraic variety.
The set $S_{I\times J}$ is the projection to $A \in \CC^{I \times J}$ of the set $$\{(\fa_1,\ldots,\fa_J,\fb): \fb^{\otimes 2} \in  \Wcal(A)\cap \Vcal_I, \, \fb \text{ not collinear to any of }\fa_1,\ldots,\fa_J\}.$$
It can be defined by the vanishing and non-vanishing of polynomials, so it is constructible. 
Hence its Zariski closure is the same as its (Euclidean) closure, see~\cite[Exercise 2.3.18, 2.3.19]{hartshorne2013algebraic}.
Define
$Y_{I\times J}=\{A\in \CC^{I\times J}: \rank A\odot A<J\}$.
We have $X_{I\times J}\cap Y_{I\times J}^c \subset S_{I \times J} \subset X_{I \times J}$.
Taking the closures of the three sets in this chain of containments gives the result.
\end{proof}

\begin{proposition}
When $J\leq {I\choose 2}$, the set $S_{I\times J}$ is closed if and only if $J\leq 7$.
\end{proposition}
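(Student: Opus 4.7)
The plan is to combine Proposition~\ref{prop: Zariski closure} with Proposition~\ref{prop: identifiable but khatri-rao not full rank}. By Proposition~\ref{prop: Zariski closure}, $\overline{S_{I\times J}} = X_{I\times J}$, so $S_{I\times J}$ is closed precisely when $X_{I\times J} \subseteq S_{I\times J}$, i.e., when no complex identifiable matrix lies in $X_{I\times J}$. The goal is therefore to determine exactly which complex identifiable matrices sit inside $X_{I\times J}$.

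The next step is to stratify $X_{I\times J}\setminus S_{I\times J}$ by the rank of $A\odot A$. For the stratum $\rank(A\odot A)=J$, the biconditional stated just after Lemma~\ref{lemma:matrix D} directly gives that complex identifiability is equivalent to $\ker D(A)\cap \Zcal_J=\emptyset$, so a complex identifiable $A$ in this stratum is never in $X_{I\times J}$. For the stratum $\rank(A\odot A)<J$, I would show that \emph{every} complex identifiable $A$ lies in $X_{I\times J}$: the rank deficiency yields a nonzero $\lambda\in\CC^J$ with $\sum_j\lambda_j\fa_j^{\otimes 2}=0$, and since identifiability forces all columns of $A$ to be nonzero, $\lambda$ must have at least two nonzero entries. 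Lemma~\ref{lemma:matrix D} then gives $D(A)\varphi(\lambda)=0$ (the zero matrix satisfies every $2\times 2$ minor condition), and perturbing the zero entries of $\lambda$ to $1/n$ produces a sequence in $\pi\circ\varphi((\CC^\star)^J)$ converging to $\pi\circ\varphi(\lambda)$, so $\pi\circ\varphi(\lambda)\in \Zcal_J$.

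Combining the two strata, $X_{I\times J}\setminus S_{I\times J}$ coincides exactly with the set of complex identifiable matrices satisfying $\rank(A\odot A)<J$. Proposition~\ref{prop: identifiable but khatri-rao not full rank} characterizes precisely when this set is nonempty (for $J\leq {I\choose 2}$): it is empty if and only if $J\leq 7$, which completes the proof.

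The work is mostly bookkeeping with the existing results; the main subtlety is in the second stratum, where one must confirm that $\pi\circ\varphi(\lambda)$ genuinely belongs to $\Zcal_J$ even when $\lambda$ has zero coordinates, so that linear dependence among the $\fa_j^{\otimes 2}$ truly forces $A\in X_{I\times J}$. The perturbation argument described above resolves this cleanly.
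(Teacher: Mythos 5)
Your proposal is correct and follows essentially the same route as the paper: reduce closedness of $S_{I\times J}$ to the (non)existence of complex identifiable matrices with $\rank(A\odot A)<J$ via Proposition~\ref{prop: Zariski closure}, then invoke Proposition~\ref{prop: identifiable but khatri-rao not full rank}. The only difference is that you spell out details the paper leaves implicit (the stratification by $\rank(A\odot A)$ and the perturbation argument showing $\pi\circ\varphi(\lambda)\in\Zcal_J$ when $\lambda$ has zero coordinates), which is a faithful elaboration rather than a new approach.
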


\begin{proof}
To show that $S_{I\times J}$ is not closed, we need some complex identifiable matrix $A \in \CC^{I \times J}$ such that $A\odot A$ does not have full column rank. Conversely, to show that $S_{I\times J}$ is closed, we need to prove that there is no such matrix. 
Both follow from Proposition \ref{prop: identifiable but khatri-rao not full rank}.
\end{proof}

\section{Numerical experiments}\label{section: numerical result}
We evaluate the performance of Algorithm \ref{alg:recover A} on synthetic and real data. 
The code for our computations can be found at \url{https://github.com/QWE123665/overcomplete_ICA}. 

The second and fourth cumulant tensors $\kappa_2,\kappa_4$ are the input to Algorithm~\ref{alg:recover A}. 
For synthetic data, these are either true population cumulants or sample cumulants. For real data, the tensors are obtained from samples. The first step of Algorithm~\ref{alg:recover A} computes the symmetric tensor decomposition of the fourth cumulant $\kappa_4$, using~\cite[Algorithm 1]{kileel2019subspace}. 
The outputs are unit vectors $\fa_1,\ldots,\fa_{J-1}$. For the second step of Algorithm~\ref{alg:recover A}, we minimize
$$ \min_{\mathbf{v} \in \RR^I, l \in \RR^J} \lVert \kappa_2-\sum_{j=1}^{J-1} l_j\fa_{j}^{\otimes 2}-l_J\mathbf{v}^{\otimes 2} \rVert ,$$
 using Powell's method~\cite{powell1964efficient}.
We initialize at a random unit vector $\mathbf{v} \in \RR^I$ and a random vector $l \in \RR^J$. We normalize the output $\mathbf{v}$ and set it to be the last column $\fa_J$.

We usually use $1000(I+J)$ iterations for the minimization with Powell's method, the default in the python function \verb|scipy.optimize.minimize|. For synthetic datasets on small sample size, and for real data, we increase the number of iterations and run the minimization 10 times and select the best solution.
That is, from 10 outputs $(\mathbf{v}_1,l_1),\ldots,(\mathbf{v}_{10},l_{10})$, we choose the one with the smallest value of $\lVert \kappa_2-\sum_{j=1}^{J-1} (l_i)_j\fa_{j}^{\otimes 2}-(l_i)_J\mathbf{v}_i^{\otimes 2} \rVert$.

\subsection{Synthetic data}

We take as input a matrix $A \in \RR^{I \times J}$ with its columns rescaled to unit vectors, for various $I$ and $J$. Assume that the first $J-1$ columns correspond to the non-Gaussian sources, and that the last column corresponds to the Gaussian source. We compute the cumulants in one of two ways:
\begin{enumerate}
    \item Use the population cumulants, $\kappa_2 = \sum_{j=1}^J \sigma_j \mathbf{a}_j^{\otimes 2}$ and $\kappa_4 = \sum_{j=1}^{J -1} \lambda_j \mathbf{a}_j^{\otimes 4}$, where $\sigma_j$ is the variance of source $j$ and $\lambda_j$ is its fourth cumulant.
    \item Fix sources $\mathbf{s}$ and compute cumulants from samples of $A\mathbf{s}$.
\end{enumerate}
The output of Algorithm~\ref{alg:recover A} is a matrix $A^\prime \in \RR^{I \times J}$ with unit vector columns.
The last column corresponds to the Gaussian source. 

We measure the proximity of $A$ and $A^\prime$. Since identifiability is only up to permutation and rescaling, we allow for re-ordering of the first $J-1$ columns. Rather than searching over all ways to match the first $J-1$ columns of $A$ to those of $A^\prime$, we use a greedy algorithm to approximate the matching, as follows. We fix the first column of $A$, denoted $\fa_1$. We choose one of the first $J-1$ columns of $A^\prime$ whose cosine similarity with $\fa_1$ has largest absolute value. We set this to be the first column of $A^\prime$ (changing its sign if the cosine similarity is negative). Then we select among the remaining $J-2$ columns, the one with the largest absolute cosine similarity with $\fa_2$ and set this as the second column of $A^\prime$ (again, changing the sign if the cosine similarity is negative). We continue until we reach the last column.
Then we compute the relative Frobenius error $$\sqrt{\sum_{i=1}^I \sum_{j=1}^J (a_{ij}-a^\prime_{ij})^2/J}.$$
We study a range of $I$ and $J$, using the population cumulants in~\autoref{fig:accurate tensor 6,7,8,9}. We examine how the error changes with the variance of the Gaussian source in~\autoref{fig:accurate tensor different variances}. 
We test how our algorithm performs with sample cumulant tensors in~\autoref{fig:exponential source different samplesizes}.

\begin{figure}[htbp]
    \centering
\begin{tikzpicture}
\definecolor{crimson2143940}{RGB}{214,39,40}
\definecolor{darkgray176}{RGB}{176,176,176}
\definecolor{darkorange25512714}{RGB}{255,127,14}
\definecolor{forestgreen4416044}{RGB}{44,160,44}
\definecolor{steelblue31119180}{RGB}{31,119,180}

\begin{axis}[
width=.8\textwidth,
height=.3\textheight,
legend cell align={left},
legend style={
  fill opacity=0,
  draw opacity=1,
  text opacity=1,
  at={(0.03,0.97)},
  anchor=north west,
  draw=none,
  fill=none
},
tick align=outside,
tick pos=left,
x grid style={darkgray176},
xlabel={J},
xmin=-0.25, xmax=49.25,
xtick style={color=black},
y grid style={darkgray176},
ylabel={Relative Frobneius error},
ymin=0.111957284935144, ymax=0.660497049618365,
ytick style={color=black}
]
\addplot [semithick, steelblue31119180]
table {%
2 0.145387378443483
3 0.185809181322044
4 0.20524512299744
5 0.221450744310641
6 0.212007923153292
7 0.209644289961853
8 0.219336511033843
9 0.201931737085048
10 0.213710645359318
11 0.208696491111133
12 0.2039869136722
13 0.200118026642689
14 0.212381744291631
15 0.219803336602282
16 0.24396661886433
17 0.390392046042944
18 0.486304743463625
19 0.534528969070015
20 0.566721367836344
21 0.593808855913495
22 0.615200922571464
23 0.623206094579313
};
\addlegendentry{I=6}
\addplot [semithick, darkorange25512714]
table {%
2 0.136890910602563
3 0.168428637208387
4 0.204818787525566
5 0.223773151887591
6 0.221308396308609
7 0.20502946166791
8 0.214582962628552
9 0.210126596721
10 0.209662227962315
11 0.213104203323558
12 0.201139228860879
13 0.191077597524385
14 0.191260239214439
15 0.188993942652737
16 0.188025302889059
17 0.195961026552129
18 0.194280513004402
19 0.199671910186405
20 0.203601170001871
21 0.212121032012881
22 0.233752991459496
23 0.385204102400837
24 0.46679101458879
25 0.519643063847087
26 0.556705198340671
27 0.582211682242352
28 0.602731683139316
29 0.619124297942016
30 0.627363469220588
};
\addlegendentry{I=7}
\addplot [semithick, forestgreen4416044]
table {%
2 0.150537487967115
3 0.177255582599077
4 0.191679498354716
5 0.222620565754397
6 0.224809040011859
7 0.227404039712459
8 0.220683939616843
9 0.216163439564011
10 0.211660798096176
11 0.207135961975666
12 0.20378281075693
13 0.197535383204459
14 0.201536396196071
15 0.193170104308813
16 0.193516832742613
17 0.180864210903043
18 0.177688873338696
19 0.178417905273963
20 0.17778112343251
21 0.171363586691642
22 0.178333491261682
23 0.175042047975705
24 0.176389239910187
25 0.185997105376104
26 0.187866270024625
27 0.194557764333003
28 0.199182214152687
29 0.242830102677492
30 0.367559797910577
31 0.459240890796228
32 0.508697259203006
33 0.54234349127882
34 0.569781119184918
35 0.591234704283331
36 0.608079014698631
37 0.623835618660949
38 0.632271017806061
};
\addlegendentry{I=8}
\addplot [semithick, crimson2143940]
table {%
2 0.154098349937965
3 0.178055859780847
4 0.210228748287856
5 0.224734350932733
6 0.227580114895574
7 0.229008894023148
8 0.225482837414038
9 0.225676294892151
10 0.219601255511756
11 0.211801677321005
12 0.215279556440537
13 0.206165319722315
14 0.205451320565404
15 0.2033601563205
16 0.187058892647757
17 0.192119803745888
18 0.189361552035611
19 0.186991455982058
20 0.18243396459582
21 0.173489431884708
22 0.162254488060737
23 0.170304205720824
24 0.171943059244799
25 0.167547124081339
26 0.164718872461354
27 0.163353418456279
28 0.163149213978865
29 0.165374196309631
30 0.162469979720949
31 0.173213414385996
32 0.173650500750439
33 0.168866350775306
34 0.185901612599061
35 0.189286476164313
36 0.197700764784188
37 0.25879390378664
38 0.359677176023156
39 0.442094870160824
40 0.495652828071384
41 0.531101910554152
42 0.559052284548572
43 0.579553118469325
44 0.59632038991438
45 0.60978772324884
46 0.628920075413872
47 0.635563423950946
};
\addlegendentry{I=9}
\addplot [semithick, black, dashed, forget plot]
table {%
16 0.111957284935144
16 0.660497049618365
};
\addplot [semithick, black, dashed, forget plot]
table {%
22 0.111957284935144
22 0.660497049618365
};
\addplot [semithick, black, dashed, forget plot]
table {%
28 0.111957284935144
28 0.660497049618365
};
\addplot [semithick, black, dashed, forget plot]
table {%
36 0.111957284935144
36 0.660497049618365
};
\end{axis}

\end{tikzpicture}%
\vspace*{-10mm}
\caption{
Relative Frobenius error using population cumulants. We fix the fourth cumulant of the non-Gaussian sources to be $6$, the second cumulant to be $1$, and consider a standard Gaussian as the Gaussian source. We run 1000 experiments on each pair $(I,J)$ and plot the mean relative Frobenius error. 
The black dashed lines are the identifiability thresholds 
 from Theorem~\ref{thm: real generic matrix result}:
${I\choose 2}+1$ for $I=6,7$ and ${I\choose 2}$ for $I=8,9$. The errors are low for $J$ below the threshold and increase beyond it. The small increase in error from $J={I\choose 2}$ to ${I\choose 2}+1$ for $I=6,7$ is due to the positive probability of non-identifiability when $J = {I \choose 2}+1$, see Theorem~\ref{thm: real generic matrix result}.
    } 
    \label{fig:accurate tensor 6,7,8,9}
\end{figure}
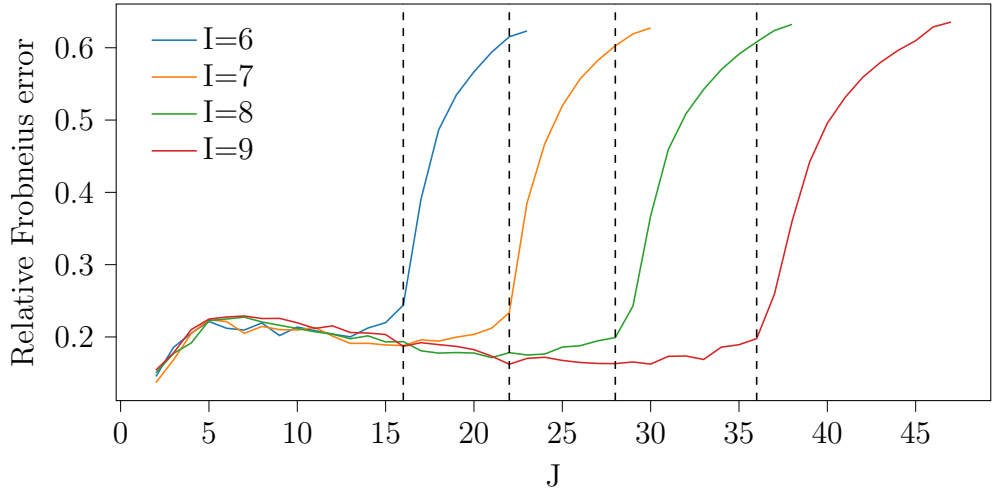

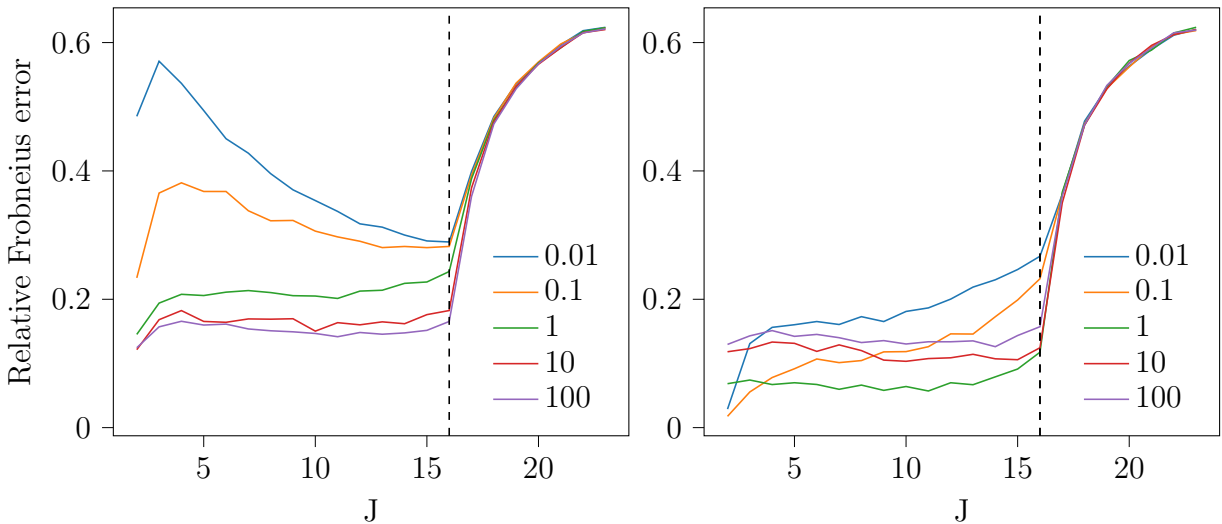
\begin{figure}[htbp]
\centering
\begin{tikzpicture}

\definecolor{crimson2143940}{RGB}{214,39,40}
\definecolor{darkgray176}{RGB}{176,176,176}
\definecolor{darkorange25512714}{RGB}{255,127,14}
\definecolor{forestgreen4416044}{RGB}{44,160,44}
\definecolor{mediumpurple148103189}{RGB}{148,103,189}
\definecolor{steelblue31119180}{RGB}{31,119,180}

\begin{groupplot}[group style={group size=2 by 1}]
\nextgroupplot[
legend cell align={left},
legend style={
  fill opacity=0,
  draw opacity=1,
  text opacity=1,
  at={(0.97,0.03)},
  anchor=south east,
  draw=none,
  fill=none
},
tick align=outside,
tick pos=left,
x grid style={darkgray176},
xlabel={J},
xmin=0.95, xmax=24.05,
xtick style={color=black},
y grid style={darkgray176},
ylabel={Relative Frobneius error},
ymin=-0.0126949994851355, ymax=0.654138500269967,
ytick style={color=black}
]
\addplot [semithick, steelblue31119180]
table {%
2 0.485075146159429
3 0.570919705851576
4 0.536481229695846
5 0.494084701427796
6 0.450217632528733
7 0.427643236616655
8 0.395730706104368
9 0.370540103294744
10 0.353833743407811
11 0.336929195763671
12 0.317548675487933
13 0.312387564953184
14 0.300075627344828
15 0.290919489493005
16 0.289280275704571
17 0.399007107462451
18 0.484731820621217
19 0.535464379005961
20 0.568024354054868
21 0.596393989974503
22 0.618548726555697
23 0.623734119286179
};
\addlegendentry{0.01}
\addplot [semithick, darkorange25512714]
table {%
2 0.233350684128639
3 0.365534016198272
4 0.381462775727403
5 0.367979544561712
6 0.36796343488972
7 0.337937233338154
8 0.322325781312089
9 0.322839019067869
10 0.306194931146727
11 0.297205301824822
12 0.290410454286757
13 0.280531449455808
14 0.282287304351144
15 0.280483021937442
16 0.282317249452874
17 0.393100381202188
18 0.482274363460378
19 0.536916788000484
20 0.569840537504973
21 0.597818907876277
22 0.615125006081595
23 0.623112943088028
};
\addlegendentry{0.1}
\addplot [semithick, forestgreen4416044]
table {%
2 0.14525388873828
3 0.194034043513459
4 0.207736726452618
5 0.205832888765238
6 0.211084155369976
7 0.213566341099864
8 0.210440658455332
9 0.205619790234202
10 0.205054157381134
11 0.201300183061979
12 0.212749120185266
13 0.214078402718499
14 0.224803388303991
15 0.226938591945641
16 0.24325828110089
17 0.387834443143945
18 0.479014928579021
19 0.530265818846191
20 0.568221969402295
21 0.591947349382923
22 0.617460509590892
23 0.623180265367232
};
\addlegendentry{1}
\addplot [semithick, crimson2143940]
table {%
2 0.121482518985189
3 0.168197064741167
4 0.182382113159046
5 0.165464530668103
6 0.164138642118068
7 0.169382804825746
8 0.168976334873508
9 0.169670164034123
10 0.150350005382463
11 0.163556100392898
12 0.160197857223004
13 0.164825475414204
14 0.161912033876978
15 0.176073837569772
16 0.182458132089524
17 0.373596755901929
18 0.476072462938925
19 0.531782808772386
20 0.566202120228035
21 0.59187507104586
22 0.615451638060202
23 0.620311240987113
};
\addlegendentry{10}
\addplot [semithick, mediumpurple148103189]
table {%
2 0.124439041469486
3 0.157042847174504
4 0.165717085047204
5 0.15984944146951
6 0.161285579129698
7 0.153754496298591
8 0.150922460976075
9 0.149298087207749
10 0.146728716769572
11 0.14164679462374
12 0.148240499300604
13 0.145547080199935
14 0.147593672867144
15 0.151569886671915
16 0.165412482430476
17 0.360288280720646
18 0.473073469057897
19 0.527996647955313
20 0.56696840730981
21 0.594301860921081
22 0.61505416056494
23 0.62139779886463
};
\addlegendentry{100}
\addplot [semithick, black, dashed, forget plot]
table {%
16 -0.0126949994851355
16 0.648846699301228
};

\nextgroupplot[
legend cell align={left},
legend style={
  fill opacity=0,
  draw opacity=1,
  text opacity=1,
  at={(0.97,0.03)},
  anchor=south east,
  draw=none,
  fill=none
},
tick align=outside,
tick pos=left,
x grid style={darkgray176},
xlabel={J},
xmin=0.95, xmax=24.05,
xtick style={color=black},
y grid style={darkgray176},
ylabel={},
ymin=-0.0126949994851355, ymax=0.654138500269967,
ytick style={color=black}
]
\addplot [semithick, steelblue31119180]
table {%
2 0.0288336147085925
3 0.130977517556425
4 0.156240513911982
5 0.160350093736156
6 0.165317678018846
7 0.160700506435806
8 0.172904317334738
9 0.165299231125393
10 0.181046066138317
11 0.186554169807606
12 0.200257018854578
13 0.218962777721186
14 0.230367634678683
15 0.246356823389438
16 0.267022918662189
17 0.363587362007187
18 0.47756068858675
19 0.529845001130916
20 0.563328964056284
21 0.58931056286527
22 0.611524846840344
23 0.620460074028873
};
\addlegendentry{0.01}
\addplot [semithick, darkorange25512714]
table {%
2 0.0176156141400964
3 0.0555985808516048
4 0.0780503477492498
5 0.0917479954559651
6 0.106964284323515
7 0.101275651927435
8 0.104500400572899
9 0.118030718257494
10 0.118451509898987
11 0.126093258566204
12 0.146090478537078
13 0.145819604448456
14 0.172837597884953
15 0.198901460864197
16 0.231949841374711
17 0.363955318004253
18 0.473745823614549
19 0.52891658745366
20 0.561994822410905
21 0.593266237656939
22 0.612739207701981
23 0.618722859758112
};
\addlegendentry{0.1}
\addplot [semithick, forestgreen4416044]
table {%
2 0.0684568034195205
3 0.0741731721605066
4 0.0671300537312444
5 0.0699350813722846
6 0.067243415424956
7 0.0597318780117873
8 0.0662855624791515
9 0.058058712138559
10 0.0640601320982008
11 0.0570920770937305
12 0.0699320527198441
13 0.0668693527442032
14 0.0791178945099639
15 0.091316859677871
16 0.11746494711681
17 0.366871038300395
18 0.471996742630558
19 0.531694637026935
20 0.571998964863445
21 0.588302504326643
22 0.614997819348138
23 0.623827886644736
};
\addlegendentry{1}
\addplot [semithick, crimson2143940]
table {%
2 0.118221484640744
3 0.123126893688326
4 0.133349385073247
5 0.131350689775493
6 0.118815036949334
7 0.128929916223627
8 0.120042762617375
9 0.105215341599587
10 0.103339032856433
11 0.107594853982265
12 0.108802967437209
13 0.114254963365369
14 0.107233888123479
15 0.105833470482668
16 0.124458043991682
17 0.350361534141096
18 0.471445920873902
19 0.527806975154539
20 0.568091107957173
21 0.595605364938332
22 0.612375835978144
23 0.620399066242941
};
\addlegendentry{10}
\addplot [semithick, mediumpurple148103189]
table {%
2 0.12964957985106
3 0.1430116783412
4 0.151196256922618
5 0.142232650168885
6 0.14525682415808
7 0.140140449508131
8 0.132569089388158
9 0.135588706094836
10 0.130330013540398
11 0.133791344446971
12 0.133790538877934
13 0.135179557720462
14 0.126140178701993
15 0.143523085390094
16 0.157242154643071
17 0.360965234385827
18 0.472064869605593
19 0.533307363249728
20 0.567468274248321
21 0.591608750605863
22 0.615239770491082
23 0.619417250841175
};
\addlegendentry{100}
\addplot [semithick, black, dashed, forget plot]
table {%
16 -0.0126949994851355
16 0.654138500269967
};
\addplot [semithick, black, dashed, forget plot]
table {%
16 -0.0126949994851355
16 0.654138500269967
};
\end{groupplot}

\end{tikzpicture}
\vspace*{-15mm}
    \caption{Relative Frobenius error for differing Gaussian source variance. We consider variances in the range $\{ 0.01, 0.1, 1, 10, 100 \}$. We fix $I=6$. The black dashed lines are the threshold $J={I\choose 2}+1=16$. For each matrix size and variance, we run the experiment 1000 times and plot the mean. 
    As the variance of the Gaussian source increases, the relative Frobneius error decreases.
In the left figure, we use $1000(I+J)$ iterations in Powell's method. On the right, we increase the number of iterations to $500000$, which makes the algorithm more stable to change of variance.}
    \label{fig:accurate tensor different variances}
\end{figure}

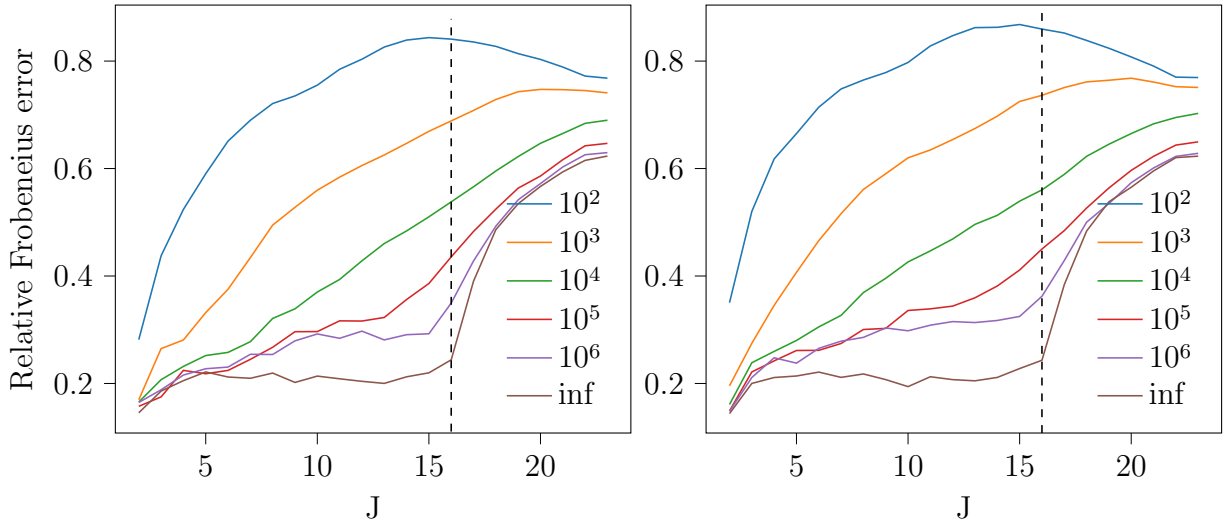
\begin{figure}[htbp]
    \centering
\begin{tikzpicture}

\definecolor{crimson2143940}{RGB}{214,39,40}
\definecolor{darkgray176}{RGB}{176,176,176}
\definecolor{darkorange25512714}{RGB}{255,127,14}
\definecolor{forestgreen4416044}{RGB}{44,160,44}
\definecolor{mediumpurple148103189}{RGB}{148,103,189}
\definecolor{sienna1408675}{RGB}{140,86,75}
\definecolor{steelblue31119180}{RGB}{31,119,180}

\begin{groupplot}[group style={group size=2 by 1}]
\nextgroupplot[
legend cell align={left},
legend style={
  fill opacity=0,
  draw opacity=1,
  text opacity=1,
  at={(0.97,0.03)},
  anchor=south east,
  draw=none,
  fill=none
},
tick align=outside,
tick pos=left,
x grid style={darkgray176},
xlabel={J},
xmin=0.95, xmax=24.05,
xtick style={color=black},
y grid style={darkgray176},
ylabel={Relative Frobeneius error},
ymin=0.107773247047586, ymax=0.904244212483377,
ytick style={color=black}
]
\addplot [semithick, steelblue31119180]
table {%
2 0.281547401182566
3 0.437468341267125
4 0.523949514571982
5 0.590318278378558
6 0.651088692407808
7 0.68984793973642
8 0.721011028136284
9 0.735148737463231
10 0.75518822815885
11 0.784744988504688
12 0.803501254806508
13 0.826120688351022
14 0.83908594784339
15 0.843632053157477
16 0.840899670977242
17 0.835547876970019
18 0.827433496804018
19 0.813868378150561
20 0.803120555975036
21 0.788889727627306
22 0.772116621529183
23 0.768176951988246
};
\addlegendentry{$10^2$}
\addplot [semithick, darkorange25512714]
table {%
2 0.169627149734934
3 0.264799168692668
4 0.280956881760508
5 0.331879890396616
6 0.375363249523798
7 0.434167715516405
8 0.494780567107104
9 0.527833638776256
10 0.559686813347674
11 0.583954113520202
12 0.605564667914989
13 0.625216864546349
14 0.646870918991694
15 0.669298146561541
16 0.688663231015739
17 0.708163392366625
18 0.728527849253589
19 0.742968442693434
20 0.747292446980467
21 0.746799489933978
22 0.745099627432696
23 0.740832702107131
};
\addlegendentry{$10^3$}
\addplot [semithick, forestgreen4416044]
table {%
2 0.165440139896478
3 0.207009554479928
4 0.231740177470491
5 0.251949760599113
6 0.257823236395572
7 0.277738818896748
8 0.321019376630982
9 0.338908567018739
10 0.370188170007442
11 0.393569621171288
12 0.428270288310797
13 0.46041146728276
14 0.483908189769646
15 0.510124581752028
16 0.537979844587704
17 0.566055084452909
18 0.59544392510405
19 0.622247759154525
20 0.64698155036168
21 0.66526769627989
22 0.684108370187937
23 0.689951648635497
};
\addlegendentry{$10^4$}
\addplot [semithick, crimson2143940]
table {%
2 0.157475496757222
3 0.174997691866253
4 0.224355311935554
5 0.21810915408106
6 0.224297791066187
7 0.244879219520515
8 0.267378785484706
9 0.296339925610774
10 0.296556504686381
11 0.316547287335781
12 0.316163149473091
13 0.322829780374575
14 0.356216542055977
15 0.386057704063067
16 0.435574172816398
17 0.482892259894373
18 0.524429886423367
19 0.563550086868051
20 0.586066325819098
21 0.616480541625539
22 0.642340003809462
23 0.646856803429395
};
\addlegendentry{$10^5$}
\addplot [semithick, mediumpurple148103189]
table {%
2 0.16466434165407
3 0.188211134954589
4 0.21568655081137
5 0.227400158527214
6 0.230506041890233
7 0.254270936790947
8 0.25389065245664
9 0.279325446789089
10 0.29213048294884
11 0.284066191803526
12 0.297375295122918
13 0.28105117581634
14 0.290775330398967
15 0.292481309073876
16 0.349596684747674
17 0.428060396937173
18 0.49280885126648
19 0.541825025989931
20 0.572022016667034
21 0.602949455190661
22 0.625688875543057
23 0.629648265962641
};
\addlegendentry{$10^6$}
\addplot [semithick, sienna1408675]
table {%
2 0.145387378443483
3 0.185809181322044
4 0.20524512299744
5 0.221450744310641
6 0.212007923153292
7 0.209644289961853
8 0.219336511033843
9 0.201931737085048
10 0.213710645359318
11 0.208696491111133
12 0.2039869136722
13 0.200118026642689
14 0.212381744291631
15 0.219803336602282
16 0.24396661886433
17 0.390392046042944
18 0.486304743463625
19 0.534528969070015
20 0.566721367836344
21 0.593808855913495
22 0.615200922571464
23 0.623206094579313
};
\addlegendentry{inf}
\addplot [semithick, black, dashed, forget plot]
table {%
16 0.110475144707783
16 0.878544286893177
};

\nextgroupplot[
legend cell align={left},
legend style={
  fill opacity=0,
  draw opacity=1,
  text opacity=1,
  at={(0.97,0.03)},
  anchor=south east,
  draw=none,
  fill=none
},
tick align=outside,
tick pos=left,
x grid style={darkgray176},
xlabel={J},
xmin=0.95, xmax=24.05,
xtick style={color=black},
y grid style={darkgray176},
ymin=0.107773247047586, ymax=0.904244212483377,
ytick style={color=black}
]
\addplot [semithick, steelblue31119180]
table {%
2 0.350384669496495
3 0.520191317699643
4 0.617890866188145
5 0.665074239884582
6 0.714183087863518
7 0.748141846931324
8 0.764734042331409
9 0.778630625494191
10 0.797450533773379
11 0.828190340308499
12 0.847250959516266
13 0.862269848514302
14 0.862846818061725
15 0.86804098678175
16 0.859573359104069
17 0.852226559417823
18 0.838449439127905
19 0.823871279413514
20 0.80770836027117
21 0.790720287645084
22 0.77011350024683
23 0.769357151768388
};
\addlegendentry{$10^2$}
\addplot [semithick, darkorange25512714]
table {%
2 0.195676842537254
3 0.274727633249658
4 0.345418912950182
5 0.406815046147054
6 0.465815592053093
7 0.51602956823059
8 0.561150475247919
9 0.590303410404526
10 0.619924220843823
11 0.634596050297171
12 0.653907081292977
13 0.674547373172215
14 0.697497098961796
15 0.724818970760738
16 0.736254886679988
17 0.75075158841807
18 0.761278413936608
19 0.764306213413659
20 0.768136650290713
21 0.760995138376522
22 0.752330061486968
23 0.750937696254655
};
\addlegendentry{$10^3$}
\addplot [semithick, forestgreen4416044]
table {%
2 0.160826918863482
3 0.23859080158249
4 0.259677857381942
5 0.280031121035092
6 0.305230555991331
7 0.326740698840926
8 0.369217781120563
9 0.395708304805236
10 0.426217226175902
11 0.446795693379249
12 0.468856327701749
13 0.496104564384698
14 0.512943125637539
15 0.539279936583048
16 0.560242799170367
17 0.58913102281714
18 0.622717844214972
19 0.645253499273481
20 0.665117568904168
21 0.683176108540841
22 0.694989431310803
23 0.702642640452127
};
\addlegendentry{$10^4$}
\addplot [semithick, crimson2143940]
table {%
2 0.148940793601683
3 0.221425494322871
4 0.242041231785951
5 0.261355872999342
6 0.261702555245031
7 0.274397161579997
8 0.300472737280771
9 0.302482764029205
10 0.335849812194123
11 0.338855840816844
12 0.343964966819182
13 0.359483395300441
14 0.381401080257396
15 0.411157451344757
16 0.450218244950585
17 0.484426731221003
18 0.526425655524716
19 0.563802541123194
20 0.596584837955792
21 0.622720308924102
22 0.64364694415076
23 0.6496495908488
};
\addlegendentry{$10^5$}
\addplot [semithick, mediumpurple148103189]
table {%
2 0.14830661207429
3 0.211145654821293
4 0.24782199371776
5 0.237833585937628
6 0.265348594359236
7 0.278709111988457
8 0.285778825594445
9 0.30328325150041
10 0.298060286191346
11 0.308472749009315
12 0.315159274838457
13 0.313376706596576
14 0.317228930412068
15 0.324751961080223
16 0.362246778637271
17 0.428736338042853
18 0.500180091939682
19 0.534788745400149
20 0.573637931044198
21 0.601023744847728
22 0.622612905136902
23 0.628339659814555
};
\addlegendentry{$10^6$}
\addplot [semithick, sienna1408675]
table {%
2 0.143976472749213
3 0.200078865804726
4 0.211063101247569
5 0.213885993626632
6 0.221155193596539
7 0.211351687936054
8 0.217685041955173
9 0.207172404480418
10 0.194144474289826
11 0.212510042986819
12 0.207133999648566
13 0.204917978696713
14 0.211263759214826
15 0.227758837645344
16 0.243184811628038
17 0.383843709286763
18 0.483652134442813
19 0.53811011401435
20 0.565183193039986
21 0.595525492698038
22 0.620345137849353
23 0.623040037954855
};
\addlegendentry{inf}
\addplot [semithick, black, dashed, forget plot]
table {%
16 0.107773247047586
16 0.904244212483377
};
\end{groupplot}

\end{tikzpicture}
\vspace*{-15mm}
    \caption{Relative Frobenius error with sample cumulant tensors. We take our non-Gaussian sources to be exponential sources with parameter 1 (left) and Student $t$-distributed sources with five degrees of freedom (right). We set the Gaussian source to be a standard Gaussian. We fix $I = 6$. For each pair $(I,J)$, we run 1000 experiments and plot the mean Frobneius error. In both plots, the error decreases as the sample size increases. We plot the population cumulant method (labelled as `inf') for comparison. }
    \label{fig:exponential source different samplesizes}
\end{figure}

\subsection{Image data}

We test our algorithm on the CIFAR-10 dataset \cite{cifar10}, following~\cite{podosinnikova2019overcomplete}. We define a training set of 50000 color images, each of size $32\times 32$, in one of 10 classes. We convert each image to grayscale and divided the central $28\times 28$ image into $16$ images of size $7\times 7$. Our assumption is that there is a collection of $7 \times 7$ images, from which the others are expressible as a linear combination.
We use Algorithm~\ref{alg:recover A} to plot the columns of the $49\times J$ mixing matrix, see~\autoref{fig:image data}.

\begin{figure}[htbp]
    \centering
\includegraphics[scale=0.23]{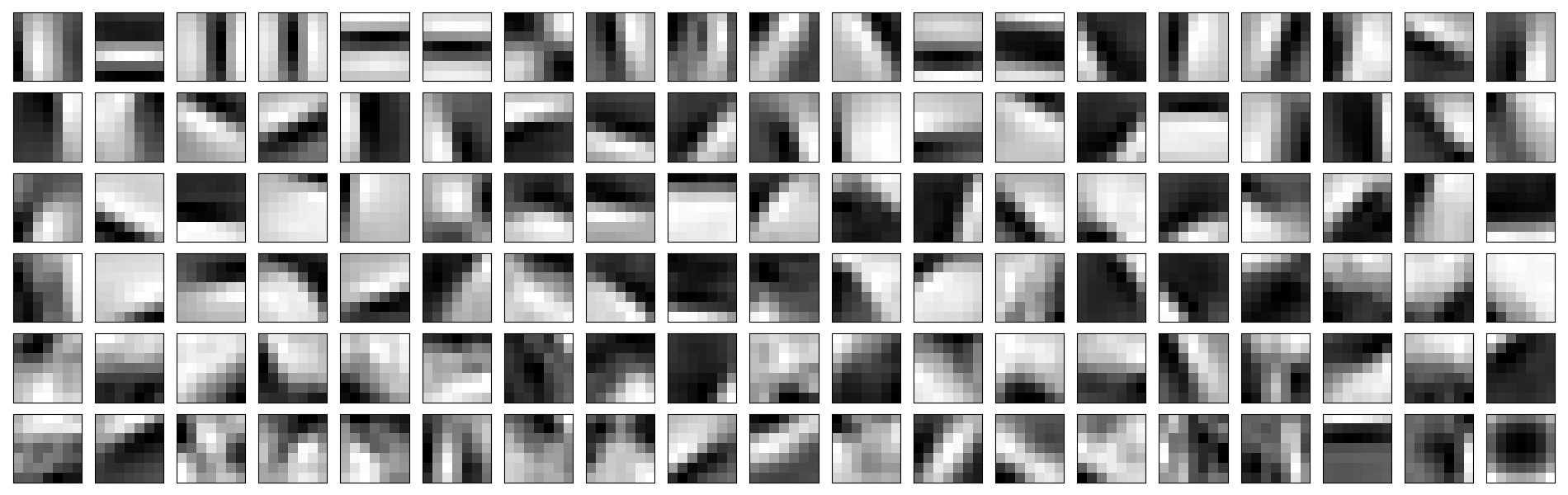}

\bigskip

\includegraphics[scale=0.23]{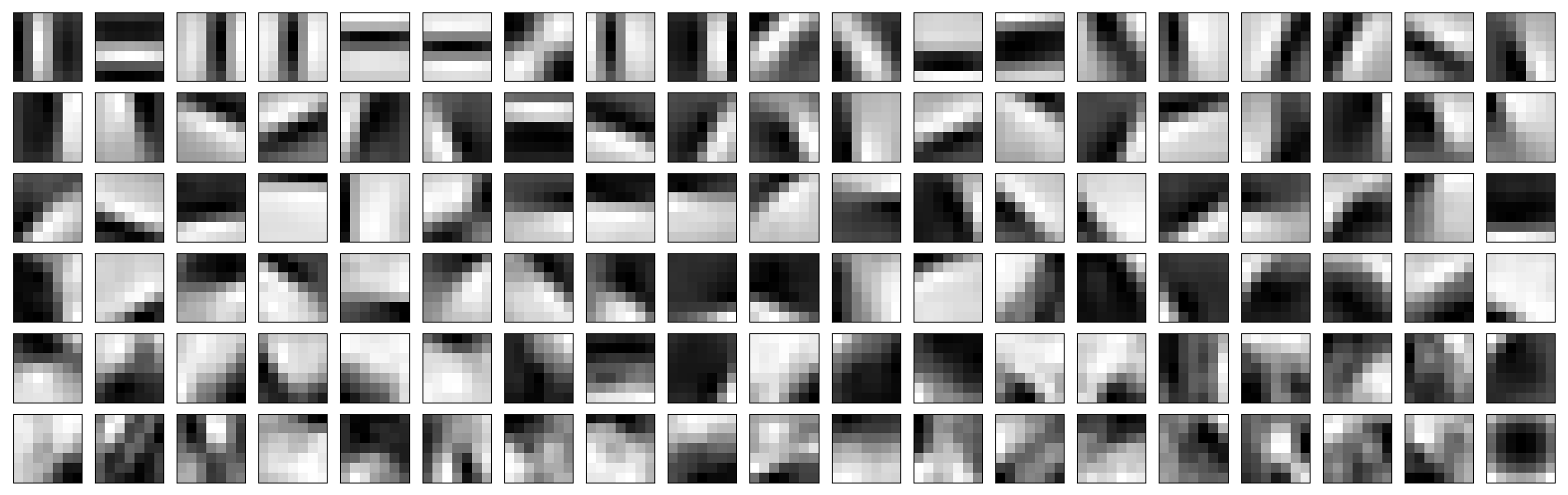} 
    \caption{
    We divide the $7 \times 7$ images in half to give two datasets, each with $400000 = \frac12 (16 \times 50000)$ datapoints of dimension $49$, keeping the number of images from each class roughly the same between the two halves. 
    We apply Algorithm~\ref{alg:recover A} to the two datasets and assess the similarity of the output. We obtain two matrices $A \in \RR^{49\times J}$, where $J$ is the number of sources. We illustrate the results for $J=114$. 
    The columns of the two $49\times 114$ matrices are plotted as grayscale $7 \times 7$ images. We observe the visual agreement of the $114$ images, reflecting the identifiability. 
    The last image is the Gaussian source, the Gaussian noise in the images. The two Gaussian sources have cosine similarity $0.99$. Their grayscale plots show that pixels patterns have more Gaussian noise at the center than the edges.
    }
    \label{fig:image data}
\end{figure}

\subsection{Protein data}

We fit an adapted LiNGAM model~\cite{shimizu2006linear} to a single-cell flow cytometry dataset~\cite{sachs2005causal}. Each datapoint measures 11 proteins in a cell. Suppose that the 11 proteins are $X_1,\ldots,X_{11}$ and that $G$ is a directed acyclic graph with nodes $X_1,\ldots,X_{11}$ whose edges $E(G)$ indicates causal relationships with weights $\lambda_{ij}$ on the edge $j \to i$. 
A linear structural equation model writes
$$X_i=\sum_{(j\to i) \in E(G)} \lambda_{ij}X_j+e_i.$$
The LiNGAM algorithm learns the graph $G$ from the higher-order cumulants of $X$, assuming the noise terms $e_i$ are non-Gaussian, using ICA. 
We use our algorithm for ICA with a Gaussian source to adapt the LiNGAM to allow a latent source of Gaussian noise:
$$X_i=\sum_{(j\to i) \in E(G)} \lambda_{ij}X_j+e_i+t_i y,$$
where $y$ is a Gaussian variable and $t_i$ is its effect on variable $X_i$.
Let $\Lambda \in \RR^{11 \times 11}$ be the matrix of weights, with $(i,j)$ entry $\lambda_{ij}$. 
Then 
$$
\mathbf{X}  = \Lambda \mathbf{X} + \mathbf{e} + \mathbf{t} y \quad 
\implies \quad \mathbf{X} =(I-\Lambda)^{-1}\mathbf{e}+(I-\Lambda)^{-1}\mathbf{t}y=
    ((I-\Lambda)^{-1} |(I-\Lambda)^{-1}\mathbf{t})\begin{pmatrix}
        \mathbf{e}\\
        y
    \end{pmatrix}.
    $$
    Algorithm \ref{alg:recover A}
recovers $A \in \RR^{11 \times 12}$ with first $11$ columns $(I-\Lambda)^{-1}$ and last column $(I-\Lambda)^{-1}\mathbf{t}$.
As in the LiNGAM algorithm, this enables us to recover the directed acyclic graph $G$. But we also recover the vector $\mathbf{t}$, which measures the Gaussian noise effect for each $X_i$. 
 
There are samples collected under 13 different perturbations in~\cite{sachs2005causal}. To test our adapted LiNGAM model via Algorithm~\ref{alg:recover A}, we divide the data from each perturbation into half to form two datasets. We log transform the data. 
We run our algorithm on each half of the data and compute the similarity of the Gaussian source effects, using cosine similarity, see Figure~\ref{fig:protein data}.
We plot the Gaussian source effects in Figure~\ref{fig:bar}. 

\bigskip 

\begin{figure}[htbp]
    \centering
\begin{tikzpicture}

\definecolor{darkgray176}{RGB}{176,176,176}
\definecolor{steelblue31119180}{RGB}{31,119,180}

\begin{groupplot}[group style={group size=2 by 1}]
\nextgroupplot[
xlabel={Cosine similarity},
tick align=outside,
tick pos=left,
x grid style={darkgray176},
xmin=0.9, 
xmax=1.005,
xtick style={color=black},
y grid style={darkgray176},
ylabel={Count},
ymin=0, ymax=217.35,
ytick style={color=black}
]
\draw[draw=none,fill=steelblue31119180] (axis cs:0.915388084569843,0) rectangle (axis cs:0.919551880014173,8);
\draw[draw=none,fill=steelblue31119180] (axis cs:0.919551880014173,0) rectangle (axis cs:0.923715675458504,37);
\draw[draw=none,fill=steelblue31119180] (axis cs:0.923715675458504,0) rectangle (axis cs:0.927879470902835,42);
\draw[draw=none,fill=steelblue31119180] (axis cs:0.927879470902835,0) rectangle (axis cs:0.932043266347165,22);
\draw[draw=none,fill=steelblue31119180] (axis cs:0.932043266347165,0) rectangle (axis cs:0.936207061791496,25);
\draw[draw=none,fill=steelblue31119180] (axis cs:0.936207061791496,0) rectangle (axis cs:0.940370857235827,19);
\draw[draw=none,fill=steelblue31119180] (axis cs:0.940370857235827,0) rectangle (axis cs:0.944534652680157,18);
\draw[draw=none,fill=steelblue31119180] (axis cs:0.944534652680157,0) rectangle (axis cs:0.948698448124488,30);
\draw[draw=none,fill=steelblue31119180] (axis cs:0.948698448124488,0) rectangle (axis cs:0.952862243568819,66);
\draw[draw=none,fill=steelblue31119180] (axis cs:0.952862243568819,0) rectangle (axis cs:0.957026039013149,102);
\draw[draw=none,fill=steelblue31119180] (axis cs:0.957026039013149,0) rectangle (axis cs:0.96118983445748,31);
\draw[draw=none,fill=steelblue31119180] (axis cs:0.96118983445748,0) rectangle (axis cs:0.965353629901811,1);
\draw[draw=none,fill=steelblue31119180] (axis cs:0.965353629901811,0) rectangle (axis cs:0.969517425346141,2);
\draw[draw=none,fill=steelblue31119180] (axis cs:0.969517425346141,0) rectangle (axis cs:0.973681220790472,9);
\draw[draw=none,fill=steelblue31119180] (axis cs:0.973681220790472,0) rectangle (axis cs:0.977845016234803,39);
\draw[draw=none,fill=steelblue31119180] (axis cs:0.977845016234803,0) rectangle (axis cs:0.982008811679134,49);
\draw[draw=none,fill=steelblue31119180] (axis cs:0.982008811679133,0) rectangle (axis cs:0.986172607123464,83);
\draw[draw=none,fill=steelblue31119180] (axis cs:0.986172607123464,0) rectangle (axis cs:0.990336402567795,167);
\draw[draw=none,fill=steelblue31119180] (axis cs:0.990336402567795,0) rectangle (axis cs:0.994500198012126,207);
\draw[draw=none,fill=steelblue31119180] (axis cs:0.994500198012126,0) rectangle (axis cs:0.998663993456456,43);

\nextgroupplot[
xlabel={Cosine similarity},
tick align=outside,
tick pos=left,
x grid style={darkgray176},
xmin=-0.038908745363811, 
xmax=1.05,
xtick style={color=black},
y grid style={darkgray176},
ymin=0, ymax=105,
ytick style={color=black}
]
\draw[draw=none,fill=steelblue31119180] (axis cs:8.96147985839819e-05,0) rectangle (axis cs:0.039087974960979,96);
\draw[draw=none,fill=steelblue31119180] (axis cs:0.039087974960979,0) rectangle (axis cs:0.078086335123374,100);
\draw[draw=none,fill=steelblue31119180] (axis cs:0.078086335123374,0) rectangle (axis cs:0.117084695285769,80);
\draw[draw=none,fill=steelblue31119180] (axis cs:0.117084695285769,0) rectangle (axis cs:0.156083055448164,99);
\draw[draw=none,fill=steelblue31119180] (axis cs:0.156083055448164,0) rectangle (axis cs:0.195081415610559,81);
\draw[draw=none,fill=steelblue31119180] (axis cs:0.195081415610559,0) rectangle (axis cs:0.234079775772954,69);
\draw[draw=none,fill=steelblue31119180] (axis cs:0.234079775772954,0) rectangle (axis cs:0.273078135935349,63);
\draw[draw=none,fill=steelblue31119180] (axis cs:0.273078135935349,0) rectangle (axis cs:0.312076496097744,69);
\draw[draw=none,fill=steelblue31119180] (axis cs:0.312076496097744,0) rectangle (axis cs:0.351074856260139,62);
\draw[draw=none,fill=steelblue31119180] (axis cs:0.351074856260139,0) rectangle (axis cs:0.390073216422534,51);
\draw[draw=none,fill=steelblue31119180] (axis cs:0.390073216422534,0) rectangle (axis cs:0.429071576584929,57);
\draw[draw=none,fill=steelblue31119180] (axis cs:0.429071576584929,0) rectangle (axis cs:0.468069936747324,42);
\draw[draw=none,fill=steelblue31119180] (axis cs:0.468069936747324,0) rectangle (axis cs:0.507068296909719,30);
\draw[draw=none,fill=steelblue31119180] (axis cs:0.507068296909719,0) rectangle (axis cs:0.546066657072114,28);
\draw[draw=none,fill=steelblue31119180] (axis cs:0.546066657072114,0) rectangle (axis cs:0.585065017234509,18);
\draw[draw=none,fill=steelblue31119180] (axis cs:0.585065017234509,0) rectangle (axis cs:0.624063377396904,15);
\draw[draw=none,fill=steelblue31119180] (axis cs:0.624063377396904,0) rectangle (axis cs:0.663061737559299,17);
\draw[draw=none,fill=steelblue31119180] (axis cs:0.663061737559299,0) rectangle (axis cs:0.702060097721694,12);
\draw[draw=none,fill=steelblue31119180] (axis cs:0.702060097721694,0) rectangle (axis cs:0.741058457884089,7);
\draw[draw=none,fill=steelblue31119180] (axis cs:0.741058457884089,0) rectangle (axis cs:0.780056818046484,4);
\end{groupplot}

\end{tikzpicture}
\vspace{-15mm}
    \caption{For 1000 experiments, we plot a histogram of the cosine similarity of the Gaussian column of the mixing matrices (left). For comparison, we show the plot for two random matrices (right). Randomness comes from both tensor decomposition and the optimization step. The plot validates the choice of our adapted LiNGAM model, since the Gaussian source effects are consistent between experiments.} 
    \label{fig:protein data}
\end{figure}
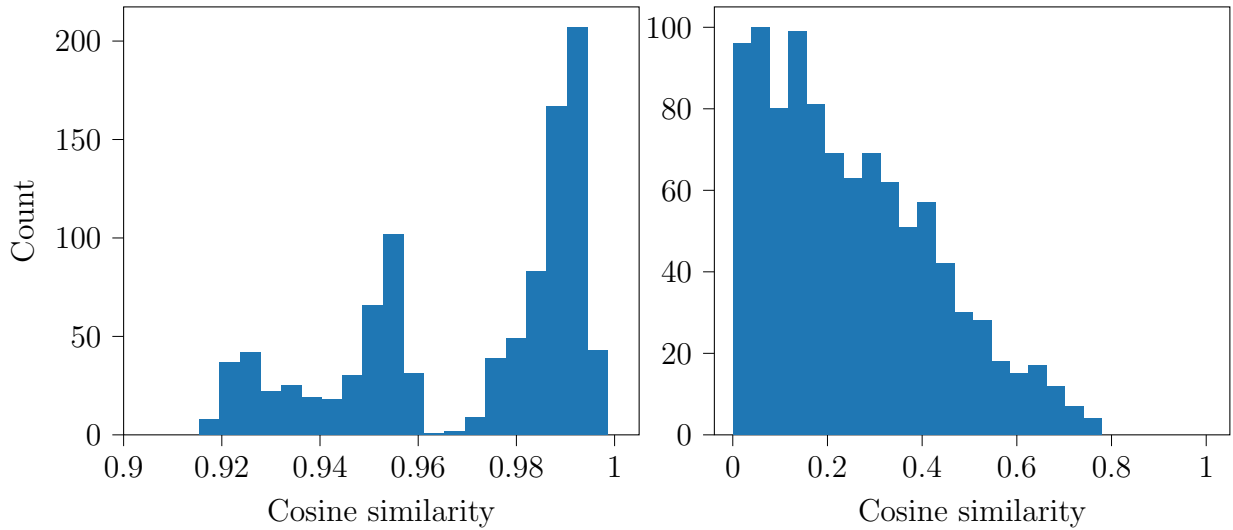

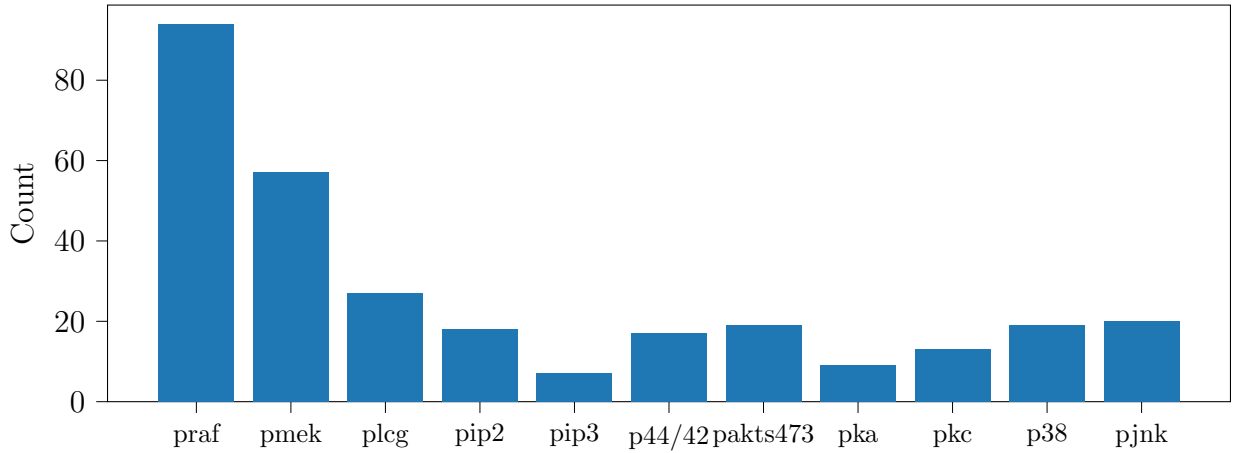
\begin{figure}[!htb]
    \centering
\begin{tikzpicture}

\definecolor{darkgray176}{RGB}{176,176,176}
\definecolor{steelblue31119180}{RGB}{31,119,180}

\begin{axis}[
width=1\textwidth,
height=.3\textheight,
ylabel={Count},
tick align=outside,
tick pos=left,
x grid style={darkgray176},
xmin=-0.94, xmax=10.94,
xtick style={color=black},
xtick={0,1,2,3,4,5,6,7,8,9,10},
xticklabels={\footnotesize praf,\footnotesize pmek,\footnotesize plcg,\footnotesize pip2,\footnotesize pip3,{\footnotesize p44/42}, {\footnotesize pakts473},\footnotesize pka,\footnotesize pkc,\footnotesize p38,\footnotesize pjnk},
y grid style={darkgray176},
ymin=0, ymax=98.7,
ytick style={color=black}
]
\draw[draw=none,fill=steelblue31119180] (axis cs:-0.4,0) rectangle (axis cs:0.4,94);
\draw[draw=none,fill=steelblue31119180] (axis cs:0.6,0) rectangle (axis cs:1.4,57);
\draw[draw=none,fill=steelblue31119180] (axis cs:1.6,0) rectangle (axis cs:2.4,27);
\draw[draw=none,fill=steelblue31119180] (axis cs:2.6,0) rectangle (axis cs:3.4,18);
\draw[draw=none,fill=steelblue31119180] (axis cs:3.6,0) rectangle (axis cs:4.4,7);
\draw[draw=none,fill=steelblue31119180] (axis cs:4.6,0) rectangle (axis cs:5.4,17);
\draw[draw=none,fill=steelblue31119180] (axis cs:5.6,0) rectangle (axis cs:6.4,19);
\draw[draw=none,fill=steelblue31119180] (axis cs:6.6,0) rectangle (axis cs:7.4,9);
\draw[draw=none,fill=steelblue31119180] (axis cs:7.6,0) rectangle (axis cs:8.4,13);
\draw[draw=none,fill=steelblue31119180] (axis cs:8.6,0) rectangle (axis cs:9.4,19);
\draw[draw=none,fill=steelblue31119180] (axis cs:9.6,0) rectangle (axis cs:10.4,20);
\end{axis}

\end{tikzpicture}
\vspace*{-15mm}
    \caption{
        We fit our adapted LiNGAM model and focus on how the Gaussian source effects each protein. We plot
    the number of times each protein appears among the three entries of $\mathbf{t}$ with largest absolute value, across 100 repeats.
    It indicates that the (log-transformed) measurements of praf and pmek contain more Gaussian noise than the other proteins.}
    \label{fig:bar}
\end{figure} 

\section{Conclusion}

In this paper, we characterized the identifiability of overcomplete ICA.
For generic mixing, we saw how identifiability is determined by the number of sources and the number of observations.
We gave an algorithm for recovering the mixing matrix from the second and fourth cumulants and tested it on real and simulated data. Our algorithm allows for a Gaussian source, which is not true of other algorithms for ICA or overcomplete ICA. 

We conclude by mentioning directions for future study.
\begin{itemize}
    \item Theorem~\ref{thm: real generic matrix result} gives three possibilities for generic identifiability: the middle case has a positive probability of identifiability and of non-identifiability. Compute these probabilities for a suitable distribution of mixing matrices. 
    \item Adapt Theorem~\ref{thm: real generic matrix result} and Algorithm~\ref{alg:recover A} to incorporate structure on $A$, such as sparsity. 
    \item Extend the complex identifiability results such as Theorem \ref{thm: when compelx identifiable exists for J>Ichoose 2}to the real setting.
    \item Study the special loci of identifiable and non-identifiable matrices geometrically, e.g. compute the dimension and degree of their Zariski closures.
\end{itemize}

\medskip

\noindent {\bf Acknowledgements.} We thank 
Chiara Meroni, Kristian Ranestad
 and Piotr Zwiernik for helpful discussions. AS was supported by the NSF (DMR-2011754).
 AS would like to thank the Isaac Newton Institute for Mathematical Sciences, Cambridge, for support and hospitality during the programme `New equivariant methods in algebraic and differential geometry' where work on this paper was undertaken. This work was supported by EPSRC grant no EP/R014604/1.
 
\bibliographystyle{alpha}
\bibliography{sample}

\end{document}